\newcommand{\egomotion}{ego-motion }
\DeclareTextFontCommand{\emph}{\em}
\begin{document}
\sloppy
%%%%%%%%
%%%%%%%%%%%%%%%%%%%%%%%%%%%%%%%%%
%\theoremstyle{plain}
%\newtheorem{theorem}{Theorem}[section]
%\newtheorem{lemma}[theorem]{Lemma}
%\newtheorem{proposition}[theorem]{Proposition}
%\newtheorem{corollary}[theorem]{Corollary}

%\theoremstyle{definition}
%\newtheorem{definition}{Definition}[section]
%\newtheorem{assumption}{Assumption}[section]
%\theoremstyle{remark}
%\newtheorem{remark}{Remark}[section]
%\newtheorem{example}{Example}[section]
%\newtheorem{note}{Note}[section]

%\newtheorem{theorem}{Theorem}
%\newtheorem{corollary}[theorem]{Corollary}
%\newtheorem{definition}{Definition}
%\newtheorem{lemma}{Lemma}
%\newtheorem{exercise}{Exercise}
%\newtheorem{remark}{Remark}
%\newtheorem{example}{Example}
%\newtheorem{warning}{Warning}
%%%%%%%%%%%%%%%%%%%%%%%%%%%%%%%%%

\newcommand{\bitem}{\begin{itemize}}
\newcommand{\eitem}{\end{itemize}}
\newcommand{\mc}[1]{\mathcal{#1}}
\newcommand{\mb}[1]{\mathbb{#1}}
\newcommand{\mf}[1]{\mathfrak{#1}}
\newcommand{\ms}[1]{\mathscr{#1}}
\newcommand{\on}[1]{\operatorname{#1}}
\newcommand{\II}{\mathbb{I}}
\newcommand{\N}{\mathbb{N}}
\newcommand{\R}{\mathbb{R}}
\newcommand{\C}{\mathbb{C}}
\newcommand{\F}{\mathcal{F}}
\newcommand{\B}{\mathbb{B}}
\newcommand{\U}{\mathbb{U}}
\newcommand{\EE}{\mathbb{E}}
\newcommand{\V}{\mathbb{V}}
\newcommand{\Q}{\mathbb{Q}}
\newcommand{\Z}{\mathbb{Z}}
\newcommand{\PP}{\mathbb{P}}
\newcommand{\TT}{\mathbb{T}}
\newcommand{\bpm}{\begin{pmatrix}}
\newcommand{\epm}{\end{pmatrix}}
\newcommand{\bsm}{\left(\begin{smallmatrix}}
\newcommand{\esm}{\end{smallmatrix}\right)}
\newcommand{\T}{\top}
\newcommand{\ul}[1]{\underline{#1}}
\newcommand{\ol}[1]{\overline{#1}}
\newcommand{\la}{\langle}
\newcommand{\ra}{\rangle}
\newcommand{\si}{\sigma}
\newcommand{\SI}{\Sigma}
\newcommand{\mrm}[1]{\mathrm{#1}}
\newcommand{\msf}[1]{\mathsf{#1}}
\newcommand{\mfk}[1]{\mathfrak{#1}}
\newcommand{\row}[2]{{#1}_{#2,\bullet}}
\newcommand{\col}[2]{{#1}_{\bullet,#2}}
\newcommand{\df}[2]{\frac{\partial #1}{\partial #2}}
\newcommand{\p}{\partial}
\newcommand{\veps}{\varepsilon}
\newcommand{\toset}{\rightrightarrows}
\newcommand{\w}{\omega}
\newcommand{\gdw}{\Leftrightarrow}
\newcommand{\vphi}{\varphi}
\newcommand{\ora}[1]{\overrightarrow{#1}}
\newcommand{\ola}[1]{\overleftarrow{#1}}
\newcommand{\oset}[2]{\overset{#1}{#2}}
\newcommand{\uset}[2]{\underset{#1}{#2}}
\newcommand{\SE}{\operatorname{SE}_{3}}
\newcommand{\se}{\mathfrak{se}_{3}}
\newcommand{\SO}{\operatorname{SO}_{3}}
\newcommand{\so}{\mathfrak{so}_{3}}
\newcommand{\Ad}{\operatorname{Ad}}
\newcommand{\dist}{\operatorname{dist}}
\newcommand{\etr}{\operatorname{etr}}
\newcommand{\vex}{\operatorname{vex}}
\newcommand{\Psym}{\mathbb{P}_{s}}
\newcommand{\Pskew}{\mathbb{P}_{a}}
\newcommand{\vecso}{\operatorname{vec}_{\mathfrak{so}}}
\newcommand{\vecse}{\operatorname{vec}_{\mathfrak{se}}}
\newcommand{\matso}{\operatorname{mat}_{\mathfrak{so}}}
\newcommand{\matse}{\operatorname{mat}_{\mathfrak{se}}}
\newcommand{\Hess}{\operatorname{Hess}}
\newcommand{\grad}{\operatorname{grad}}
\newcommand{\tr}{\operatorname{tr}}
\newcommand{\argmin}{\operatorname{arg\,min}}
\newcommand{\kronse}{\otimes_{\mathfrak{se}}}
\newcommand{\diag}{\operatorname{diag}}
\newcommand{\Exp}{\operatorname{Exp}}
\newcommand{\D}{\mathbf{d}}
\newcommand{\Id}{\operatorname{Id}}
\newcommand{\G}{\mathcal{G}}
\newcommand{\g}{\mathfrak{g}}
\newcommand{\vecg}{\operatorname{vec}_{\mathfrak{g}}}
\newcommand{\matg}{\operatorname{mat}_{\mathfrak{g}}}
\newcommand{\ad}{\operatorname{ad}}
\newcommand{\Log}{\operatorname{Log}}

\newcommand{\eins}{\mathds{1}}

\newcommand{\cperp}[3]{{#1} \perp\negthickspace\negthinspace\negthickspace\perp {#2} \,|\, {#3}}

\newcommand{\LG}[1]{\mathrm{#1}}
\newcommand{\Lg}[1]{\mathfrak{#1}}

\newcommand{\st}[1]{{\scriptstyle #1}}
\newcommand{\sst}[1]{{\scriptscriptstyle #1}}

%%%%%%%%

\title{Second-Order Recursive Filtering on the Rigid-Motion Lie Group $\SE$ Based on Nonlinear Observations}

%\subtitle{Do you have a subtitle?\\ If so, write it here}

\titlerunning{Second-Order Recursive Filtering on the Rigid-Motion Lie Group $\SE$ Based on Nonlinear Observations}        

\author{Johannes Berger \and Frank Lenzen \and Florian Becker \and Andreas Neufeld \and
        Christoph Schn\"orr 
}

\authorrunning{J.~Berger {\em et al.}} % if too long for running head

%Heidelberg Collaboratory for Image Processing
\institute{J.~Berger \at
              Research Training Group 1653, Image and Pattern Analysis Group, Heidelberg University, Speyerer Str.~6, 69115 Heidelberg, Germany   \\
              \email{johannes.berger@iwr.uni-heidelberg.de}           %  \\
%             \emph{Present address:} of F. Author  %  if needed 
\and
           F.~Becker, F.~Lenzen\at
              Heidelberg Collaboratory for Image Processing, Heidelberg University (HCI), Speyerer Str.~6, 69115 Heidelberg, Germany \\
              \email{becker@math.uni-heidelberg.de,Frank.Lenzen@iwr.uni-heidelberg.de}
           \and
           A.~Neufeld, C.~Schn\"orr\at
              Image and Pattern Analysis Group, Heidelberg University, Speyerer Str.~6, 69115 Heidelberg, Germany \\
              \email{\{neufeld,schnoerr\}@math.uni-heidelberg.de}
             % \url{http://ipa.iwr.uni-heidelberg.de}
}

\date{}

\maketitle

\begin{abstract}

Camera motion estimation from observed scene features is an important task in image processing to increase the accuracy of many methods, e.g.\ optical flow and structure-from-motion. Due to the curved geometry of the state space $\SE$ and the non-linear relation to the observed optical flow,
many recent filtering approaches use a first-order approximation and assume a Gaussian {\em a posteriori }distribution or restrict the state to Euclidean geometry. The physical model is usually also limited to uniform motions.

We propose a second-order minimum energy filter with a {\em generalized kinematic model} that copes with the full geometry of $\SE$ as well as with the nonlinear dependencies between the state space and observations. The derived filter enables reconstructing motions correctly for synthetic and real scenes, e.g.\ from the KITTI benchmark.
Our experiments confirm that the derived minimum energy filter with higher-order state differential equation copes with higher-order kinematics and is also able to minimize model noise. We also show that the proposed filter is superior to state-of-the-art extended Kalman filters on Lie groups in the case of linear observations and that our method reaches the accuracy of modern visual odometry methods.

\keywords{Minimum Energy Filter \and Lie Group \and Recursive Filtering \and Constant Acceleration Model \and Optimal Control \and Visual Odometry}

\end{abstract}

\section{Introduction}
\label{intro}

\subsection{Overview and Motivation}
Camera motion estimation is a fundamental task in many important applications (e.g.\ autonomous driving, robotics) in computer vision.
It is an essential component of structure-from-motion, simultaneous localization and mapping (SLAM) and odometry tasks. Furthermore it aids as additional prior for e.g.\ optical flow methods.
In the proposed approach, the \egomotion of the camera is fully determined solely by the apparent motion of visual features (optical flow) as recorded by the camera  without needing additional sensors such as GPS or acceleration sensors.

Although the camera motions can be reconstructed correctly from only two consecutive frames~\cite{nister2004efficient,hartley1997defense},
the best performing methods take into account multiple frames. They are more robust against the influence of erroneous correspondence estimates. 
Two approaches to making use of the temporal context can be distinguished:
batch approaches -- such as 
bundle adjustment methods~\cite{triggs2000bundle} -- first record all the frames and fit in a smooth camera path afterwards. They sometimes also incorporate loop closure constraints~\cite{whelan2013deformation} to further improve camera motion accuracy.
Factorization methods~\cite{Sturm-et-al-ECCV1996,nasihatkon2015generalized} create the problem of jointly estimating camera poses and scene points as a matrix decomposition problem. 
These batch approaches have the potential of working exactly as they make use of all available information.
On the other hand, they hardly work in real-time applications, as the volume of incorporated information increases linearly with time.

In contrast, {\em online} approaches apply
sliding window techniques~\cite{badino2013visual,bellavia2013robust,bourmaud2015robust} that track features on multiple frames to increase robustness and compute the best fitting motion. 

A mathematical description of (online) temporal smoothing is given by the notion of (stochastic) filtering~\cite{Stochastic-Filtering-09}: in case of on the one hand, an ODE describing the behavior of a latent variable, and on the other hand, observations that depend on the latent variable, the goal is to estimate the most likely value of the unknowns. However, stochastic filters suffer from non-linear dependencies of latent variables and observations as well as geometric constraints and unknown probability distributions.

That is the reason why we chose deterministic {\em Minimum Energy Filters} that do not need information about distributions and cope with the non-linearities of the observer equation and the geometry of the state space~$\SE$ in~\cite{berger2015second}. Since the state equation of the \egomotion in~\cite{berger2015second} is simple and requires small weights on the penalty term for the model noise, however, this approach is sensitive against noise and requires good observation data. 

Therefore, in this paper, 
we extend our previous work~\cite{berger2015second}  to a second-order model with constant {\em acceleration} assumption which is more stable and shows better convergence.
In our experiments, we demonstrate significantly improved performance both
on synthetic data with higher-order kinematic scenarios and on the challenging KITTI benchmark~\cite{Geiger2012CVPR}. Comparison with novel {\em continuous/discrete extended Kalman filters on Lie Groups}~\cite{bourmaud2015continuous} shows that our approach -- although being less general than~\cite{bourmaud2015continuous} -- leads to better results and is robust against imperfect initializations.

\subsection{Related Work}
Incorporation of temporal context -- in terms of (partial) differential equations -- into the estimation of latent variables has a long tradition in many common applications, e.g.\ robotics, aviation and astronautics.
Starting from the seminal work of Kalman~\cite{kalman1960new} considering Gaussian noise and linear filtering equations, stochastic filters had have great success 
in many important areas of mathematics, computer sciences and engineering
during the last fifty years. The filtering methods have been improved during the last decades to cope with nonlinearities of state and observation equations, such as extended Kalman filters~\cite{jazwinski2007stochastic}, unscented Kalman filters~\cite{julier1997new} and particle filters~\cite{arulampalam2002tutorial}. For a detailed overview of these methods we refer to~\cite{Stochastic-Filtering-09,Daum2005}.

However, one strong limitation of stochastic filters represents the fact that the {\em a posteriori} distribution is usually unknown and, in general, is infinite dimensional due to the nonlinear dependencies. To cover a large bandwidth of {\em a posteriori} distributions Brigo {\em et al.} approximated them by distributions of the exponential family~\cite{NonlFilterExpFamily-99}. In contrast, particle filters try to sample from them~\cite{arulampalam2002tutorial}. Extended and unscented Kalman filters, on the other hand, only allow distributions that are Gaussian. 

Although these methods work successfully for many real-valued problems, they cannot be easily transferred to filtering problems which are constrained to manifolds, appearing in many modern engineering and robotic applications. Therefore, in the last decade, several strategies have been developed to adapt classical unconstrained filters to filtering problems on specific Lie groups and Riemannian manifolds: Kalman filters were transferred to the manifold of symmetric positive definite matrices~\cite{tyagi2008}. Extended Kalman filters on~$\SO$~\cite{markley2003attitude} with symmetry preserving observers~\cite{bonnabel2007left} were elaborated. 
Particle filters on~$\SO$ and~$\SE$ were proposed in~\cite{kwon2007} as well on Stiefel~\cite{tompkins2007bayesian} and on Grassman manifolds~\cite{rentmeesters2010efficient}. An application of particle filters to monocular SLAM is reported in~\cite{kwon2010monocular}. 

Recently, unscented Kalman filters were generalized to Riemannian manifolds~\cite{hauberg2013}. Since then, extended Kalman filters for constrained model and observation equations were developed~\cite{bourmaud2015continuous} for general Lie groups based on the idea of the Bayesian fusion~\cite{Wolfe2011}.

However, although stochastic filters have been adapted to curved spaces and non-linear measurement equations, they still require assumptions about the {\em a posteriori} distributions, e.g.\ to be Gaussian. Furthermore, while transferring related concepts of probability theory and stochastic analysis to Riemannian manifolds is mathematically feasible \cite{hsu2002stochastic,chirikjian2011stochastic,chikuse2012statistics}, exploiting them computationally for stochastic filtering seems involved. 
The widely applied particle filters also have limitations in connection with manifolds since the sampling requirements of particles become expensive \cite{kwon2010monocular}.

A different way to approach a solution to the filtering problem was proposed by Mortensen \cite{mortensen1968}. Rather than trying to cope with the probabilistic setting of the filtering problem, he investigated the filtering problem from the viewpoint of optimal control. By using the control parameter to model noise and by integrating a quadratic penalty function over the time, he found a first-order optimal {\em Minimum Energy Filter}. The advantage of this method is that it does not rely on assumptions about, or approximations of, the {\em a posteriori} distribution and that {\em Hamilton-Jacobi-Bellman equation} provides a well-defined optimality criterion.
It was shown theoretically in~\cite{krener2003convergence} that the minimum energy estimator converges with {\em exponential} speed for control systems on~$\R^{n}$ that are uniformly observable. 

The first article applying the minimum energy filters to geometrically constrained problems used perspective projections in the case of vectorial measurements~\cite{aguiar2006minimum} .
The minimum energy filters were generalized to {\em second-order} filters on specific Lie groups
with the help of geometric control theory in~\cite{jurdjevic1997,agrachev2004,saccon2011lie}.
The Minimum Energy Filter, as introduced by Mortensen~\cite{mortensen1968}, was generalized to the Lie group~$\SO$ for the case of linear observation equations~\cite{zamani2012} and for attitude estimation~\cite{zamani2013minimum}.
Further follow-up work \cite{saccon2013optimal} generalized the filter to non-compact Lie groups~\cite{saccon2013second}.

In this article, we greatly elaborate our initial work on camera estimation using nonlinear measurement equations, especially by moving from a {\em constant velocity} assumption~\cite{berger2015second} to a {\em second-order state equation} with {\em constant acceleration} model. In addition, we investigate generalized kinematic models of arbitrary order.

\subsection{Contribution and Organization}
Our contributions reported in this paper amount
\begin{itemize}
\item to generalize the constant camera {\em velocity} model from~\cite{berger2015second} 
(non-linear measurement model)
to polynomial models, in particular the constant {\em acceleration} model;

\item to provide a complete derivation of the second-order minimum energy filter~\cite{saccon2013second} as applied to camera motion estimation together with robust numerics that are consistent with the geometry and the structure of matrix Riccati equations;

\item to report experiments demonstrating that {\em higher-order kinematic models} are more accurate than the {\em constant velocity model}~\cite{berger2015second} on synthetic (with kinematic camera tracks) and real world data and that they enable to reconstruct higher-order information;

\item to report experiments comparing our approach to state-of-the-art extended Kalman Filters on Lie groups~\cite{bourmaud2015continuous}, 
indicating that our method is superior in coping with non-linearities of the observation function as well as 
in being more robust against imperfect initializations.

\end{itemize}

In the next section, we introduce the filtering equations related to our problem of camera motion reconstruction. Next, we describe the basics of minimum energy filters and detail how to apply the (operator-valued) minimum energy filter derived from~\cite{saccon2013second} to our scenario. The numerical integration schemes of the ODEs for the optimal state will be given in Section~\ref{sec:numInt}. We will confirm the theoretical results in Section~\ref{sec:exp} by experiments on synthetic and real world data and thus underline the applicability of our approach.

\subsection{Notation} \label{sec:notation}
{\small
\begin{tabular}{ll}
$\on{GL}_{4}$ & General Linear group \\
$\SO$ & Special Orthogonal group \\
$\SE$ & Special Euclidean group \\
$\se$ & Lie algebra of $\SE$ \\
$\vecse: \se \rightarrow \R^{6}$ & vectorization operator \\
$\matse = \vecse^{-1}$ & inverse of $\vecse$ \\
$\G$ & (product) Lie group $\SE \times \R^{6}$ \\
$\g$ & Lie algebra of $\G$ \\
$T_{G}\G$ & tangent space of $\G$ at $G$ \\
$\vecg: \g \rightarrow \R^{12}$ & vectorization operator \\
$\matg =: \vecg^{-1}$ & inverse of $\vecg$ \\
$\Exp_{\G}$ & exponential map on $\G$ \\
$\Log_{\G}$ & logarithmic map on $\G$ \\
$\on{Pr}: \R^{4\times 4} \rightarrow \se $ & projection onto Lie algebra $\se$ \\
$L_{G}H:=GH$ & left translation \\
$T_{H}L_{G}$ & tangent map of left translation at $H$ \\
$G \eta := T_{\Id}L_{G}\eta $ & shorthand for tangent map \\
$G^{-1} \eta := T_{\Id}L_{G}^{\ast}\eta $ & shorthand for dual of tangent map\\
$\Id$ & identity element of Lie group \\
$\la \xi,\eta \ra_{G}$ & Riemannian metric at $G \in \G$ \\
$\la \xi, \eta \ra = \la \xi, \eta \ra_{\Id}$ & Riemannian metric on Lie algebra $\g$ \\
$\la x,y \ra$ & scalar product on $\R^{n}$ \\
$\nabla_{\cdot}\cdot$ & Levi-Civita connection on $T\G$ \\
$\omega_{\chi}\eta:=\omega(\chi, \eta) := \nabla_{\chi}\eta$ & connection function for $\chi, \eta \in \g$  \\
$\omega_{\chi}^{\leftrightharpoons}\eta:= \omega_{\eta}\chi$ & swap operator\\
$\la \omega_{\chi}^{\ast}\eta, \xi \ra:= \la  \eta, \omega_{\chi} \xi\ra$ & dual of connection function \\
$ \la \omega_{\chi}^{\leftrightharpoons \ast} \eta, \xi \ra := \la \eta, \omega_{\xi} \chi \ra$ & dual of swap operator \\
$[\cdot, \cdot]$ & Lie bracket on considered  \\ & Lie group, matrix commutator \\
$\D f(G)$ & differential/Riemannian gradient \\ & of $f$ at $G$ \\
$\D f(G)[\eta]$ & directional derivative of $f$  \\ & in direction $\eta$ \\
%$\D f(G)$ & Riemannian gradient for\\ & differentiable function $f:\G \rightarrow \R$ \\
$\Hess f(G)$ & Hessian of a twice differentiable \\ & function $f:\G \rightarrow \R$ \\
$\D_{i} f$ & differential resp.~ \\ & $i$-th component of $f$ \\
$\D_{G}$ & differential of an expression resp.~$G$  \\
$[n]:=\{1,\dots,n\}$ & set of integer numbers from $1$ to $n$ \\
$\eta, \chi, \xi$ & tangent vectors\\
$x_{i:j}$ & $i$-th to $j$-th component of $x$\\
$A_{i:j,k:l}$ & block matrix with rows from $i$ to $j$ \\
& and columns from $k$ to $l$ from $A$\\
$\eins_{n}$ & $n\times n$ identity matrix \\
$\lVert x  \rVert_{Q}^{2}:=\la x, Qx \ra$ & quadratic form regarding Q \\
%$A_{1:3}$ & rows 1--3 from a matrix / vector $A$ \\
$e_{i}^{n}$ & $i$-th unit vector in $\R^{n}$ \\
%$V = \diag(v)$ & diagonal matrix with entries \\
%&  of vector $v$ 
\end{tabular}
}

\begin{figure*}
%% Creator: Inkscape inkscape 0.91, www.inkscape.org
%% PDF/EPS/PS + LaTeX output extension by Johan Engelen, 2010
%% Accompanies image file '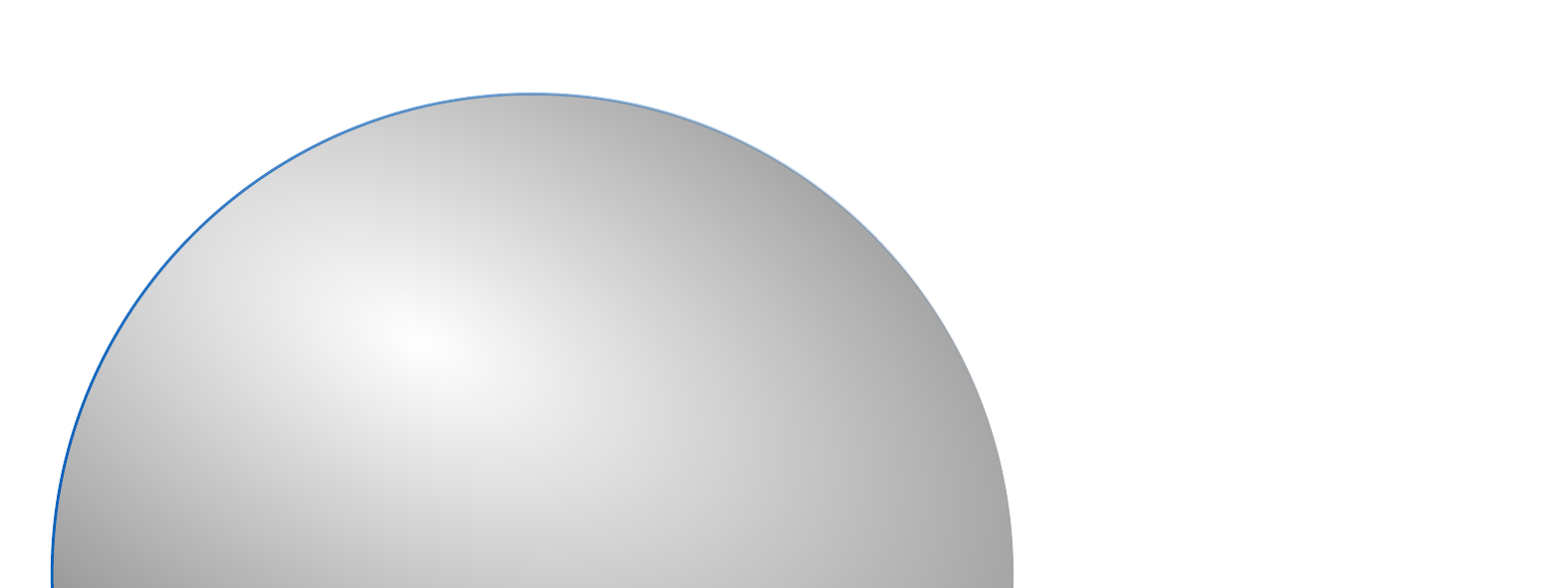' (pdf, eps, ps)
%%
%% To include the image in your LaTeX document, write
%%   \input{<filename>.pdf_tex}
%%  instead of
%%   \includegraphics{<filename>.pdf}
%% To scale the image, write
%%   \def\svgwidth{<desired width>}
%%   \input{<filename>.pdf_tex}
%%  instead of
%%   \includegraphics[width=<desired width>]{<filename>.pdf}
%%
%% Images with a different path to the parent latex file can
%% be accessed with the `import' package (which may need to be
%% installed) using
%%   \usepackage{import}
%% in the preamble, and then including the image with
%%   \import{<path to file>}{<filename>.pdf_tex}
%% Alternatively, one can specify
%%   \graphicspath{{<path to file>/}}
%% 
%% For more information, please see info/svg-inkscape on CTAN:
%%   http://tug.ctan.org/tex-archive/info/svg-inkscape
%%
\begingroup%
  \makeatletter%
  \providecommand\color[2][]{%
    \errmessage{(Inkscape) Color is used for the text in Inkscape, but the package 'color.sty' is not loaded}%
    \renewcommand\color[2][]{}%
  }%
  \providecommand\transparent[1]{%
    \errmessage{(Inkscape) Transparency is used (non-zero) for the text in Inkscape, but the package 'transparent.sty' is not loaded}%
    \renewcommand\transparent[1]{}%
  }%
  \providecommand\rotatebox[2]{#2}%
  \ifx\svgwidth\undefined%
    \setlength{\unitlength}{453.54330709bp}%
    \ifx\svgscale\undefined%
      \relax%
    \else%
      \setlength{\unitlength}{\unitlength * \real{\svgscale}}%
    \fi%
  \else%
    \setlength{\unitlength}{\svgwidth}%
  \fi%
  \global\let\svgwidth\undefined%
  \global\let\svgscale\undefined%
  \makeatother%
  \begin{picture}(1,0.375)%
    \put(0,0){\includegraphics[width=\unitlength,page=1]{manifold.pdf}}%
    \put(0.32505947,0.29482141){\color[rgb]{0,0,0}\makebox(0,0)[lb]{\smash{}}}%
    \put(0.3293432,0.24631448){\color[rgb]{0,0,0}\makebox(0,0)[lb]{\smash{$\Id \in \G$}}}%
    \put(0,0){\includegraphics[width=\unitlength,page=2]{manifold.pdf}}%
    \put(0.52338966,0.14482384){\color[rgb]{0,0,0}\makebox(0,0)[lb]{\smash{$G = \Exp_{\g}(\eta)$}}}%
    \put(0.56466906,0.05021087){\color[rgb]{0,0,0}\makebox(0,0)[lb]{\smash{$\chi = T_{\Id}L_{G}\eta =:G\eta$}}}%
    \put(0.07483545,0.1104714){\color[rgb]{0,0,0}\makebox(0,0)[lb]{\smash{$H$}}}%
    \put(0.12732682,0.16816493){\color[rgb]{0,0,0}\makebox(0,0)[lb]{\smash{$\xi = T_{\Id}L_{H}\eta = T_{\Id}L_{H}T_{\Id}^{\ast}L_{G}\chi$}}}%
    \put(0.44634006,0.10601692){\color[rgb]{0,0,0}\makebox(0,0)[lb]{\smash{$T_{G}\G$}}}%
    \put(0.33426915,0.31262707){\color[rgb]{0,0,0}\makebox(0,0)[lb]{\smash{$\g = T_{\Id}\G=\se\times\R^{6}$}}}%
    \put(0.62897424,0.33230511){\color[rgb]{0,0,0}\makebox(0,0)[lb]{\smash{$\vecg$}}}%
    \put(0.62719242,0.25034247){\color[rgb]{0,0,0}\makebox(0,0)[lb]{\smash{$\matg$}}}%
    \put(0.85173586,0.2521134){\color[rgb]{0,0,0}\makebox(0,0)[lb]{\smash{$\R^{12}$}}}%
    \put(0.52749051,0.19142751){\color[rgb]{0,0,0}\makebox(0,0)[lb]{\smash{$\Exp_{\g}$}}}%
    \put(0.46806047,0.19087799){\color[rgb]{0,0,0}\makebox(0,0)[lb]{\smash{$\Log_{\g}$}}}%
    \put(0.17907056,0.05167213){\color[rgb]{0,0,0}\makebox(0,0)[lb]{\smash{$\G = \SE \times \R^{6}$}}}%
    \put(0.46567442,0.27691657){\color[rgb]{0,0,0}\makebox(0,0)[lb]{\smash{$\eta$}}}%
    \put(0.32883922,0.27718254){\color[rgb]{0,0,0}\makebox(0,0)[lb]{\smash{$\mathbf{0}\in \g$}}}%
  \end{picture}%
\endgroup%

\caption{Illustration of the Lie group $\G$ (represented as sphere) with its Lie algebra $\g$ and tangent spaces at different points. A tangent vector $\chi$ at a point $G$ can be expressed as tangent map at identity of the left translation at $G$ of a vector $\eta \in \g$, i.e. $\chi = G\eta$. Since the Lie algebra $\g$ can be identified by the $\vecg$ mapping with $\R^{12}$, we can express each tangent vector at a point $G$ as a pair $(G,\vecg(\eta))$. Each tangent vector on any tangent space may be mapped to the manifold using the exponential map $\Exp$.}
\end{figure*}
Moreover, we will employ the following concepts from differential geometry:

\paragraph{Riemannian metric on product Lie group.} On $\SE$ as submanifold of $\on{GL}_{4}$, the Riemannian metric at $E\in \SE$ for $\xi, \eta \in T_{E}\SE$ is given by $\la \xi, \eta \ra_{E}:=\la E^{-1}\xi, E^{-1}\eta\ra_{\eins_{4}}$ where $\la A,B \ra_{\eins_{4}}:=\tr(A^{\T}B)$ is the usual inner matrix product.
 
\paragraph{Riemannian Gradient.} For a real-valued function $f:\G \rightarrow \R$, the Riemannian gradient $\D f(G)$ is defined through the relation $\la \D f(G), \eta \ra_{G} := \D f(G)[\eta]$ for all $\eta \in T_{G}\G.$ For the product Lie group $\G = \SE \times \R^{6}$ and $G = (E,v)\in \G, \eta = (E\eta_{1}, \eta_{2}) \in T_{G}\G$ we calculate the Riemannian gradient as follows:
\begin{align*}
\D & f(G)[\eta] =  \la \D f(G), \eta \ra_{G} \\
= & \la  E^{-1} \D_{E} f((E,v)), \eta_{1} \ra_{\eins_{4}} + \la \D_{v} f((E,v)), \eta_{2} \ra\,,
\end{align*}
where  $\D_{E}f((E,v))$ is the partial Riemannian gradient on $\SE$ and $\D_{v} f((E,v))$ is the Euclidean partial gradient on $\R^{6}$.

\paragraph{Levi-Civita connection and connection function.}
For $G\in \G$ we denote by $\nabla$ the Levi-Civita connection of the Lie group $\G$ given through $\nabla: T_{G}\G \times T_{G}\G \rightarrow T_{G}\G$, with the properties {\em symmetry}, i.e. $[\eta,\chi] = \nabla_{\eta}\chi - \nabla_{\chi}\eta$, where $[\cdot, \cdot]$ denotes the Lie bracket, and {\em compatibility with the Riemannian metric.} The Levi-Civita connection is characterized by its connection function $\omega: \g \times \g \rightarrow \g$, $\omega(\xi,\eta):=\omega_{\xi}\eta:=\nabla_{\xi}\eta$ with the property $\nabla_{G\xi}G\eta = G\omega_{\xi}\eta$ for $\xi,\eta \in \g$.
 
\paragraph{Riemannian Hessian.} The Riemannian Hessian is defined through
$\la \Hess f(G)[\xi], \eta \ra := \D (\D f(G)[\xi])[\eta] - \D f(G) [\nabla_{\eta}\xi].$ On the product Lie group $\G = \SE \times \R^{6}$ which we consider in this paper, we set $G=(E,v)\in \G$ and $\xi = (E\xi_{1},\xi_{2})\in T_{G}\G,$ $\eta = (E\eta_{1},\eta_{2})\in T_{G}\G.$

\section{Minimum Energy Filtering Approach}
\label{sec:1}

\subsection{State Model with Constant Acceleration Assumption}
In the following we will denote by $E(t)\in \SE$ the time-dependent (external) camera parameter that can be expressed in terms of a rotation matrix $R(t) \in \SO$ and a translation vector $w(t) \in \R^{3}$ as a $4 \times 4$ matrix
\begin{equation}
E(t) = \bpm R(t) & w(t) \\ \mathbf{0}_{1\times 3} & 1\epm\,,
\end{equation}
for which we also use the shorthand $E(t) = (R(t), w(t)).$
Since the  \egomotion of a camera is generally not constant, the model $\dot{E}=0$ assumed in previous work  \cite{berger2015second} does not hold in real-world problems, where a camera fixed to a car rotates and accelerates in different directions. 
The \emph{constant acceleration} assumption, however, is more suited in this cases. 
It can be described by the second-order differential equation $\ddot{E}(t) = 0$ for all $t$ with initial pose $E(t_{0}) = E_{0}$ and velocity $\dot{E}(t_{0}) = V_{0}$.
In general, one can consider a polynomial model of even higher-order for $E(t)$. In the following, we will focus on the assumption that $E(t)$ is quadratic in $t$. We will comment on generalizations at the end of Section~\ref{sec:MEF-Derivation}. 

The equation $\ddot{E}(t) = 0$  can be prescribed as a system of first-order differential equations
\begin{align}
\begin{split}
\dot{E}(t) = &  V(t)\,,  \\
\dot{V}(t) = & 0\,,
\end{split}
\end{align}
where $V(t) \in T_{E(t)}\SE$ and $\dot{V}(t) \in T_{V(t)}T_{E(t)}\SE = T_{E(t)}\SE$. However, since the tangent bundle of a Lie Group can be expressed in terms of the product $T\SE \sim \SE \times \se$, we obtain a more compact expression, i.e.
\begin{align}
\label{eq:const-acc-ass-2states}
\begin{split}
\dot{E}(t) = & E(t) \matse(v(t)), \\
\dot{v}(t) = & \mathbf{0}_{6} \in \R^{6},
\end{split}
\end{align}
where the operator $\matse: \R^{6} \rightarrow \se$ is defined by
\begin{equation}
(\eta_{1}, \eta_{2}, \eta_{3}, \eta_{4}, \eta_{5}, \eta_{6})^{\T} \mapsto \left(\begin{smallmatrix} 
0 & -\tfrac{\eta_{3}}{\sqrt{2}} & \tfrac{\eta_{2}}{\sqrt{2}} & \eta_{4}\\
\tfrac{\eta_{3}}{\sqrt{2}} & 0 & -\tfrac{\eta_{1}}{\sqrt{2}} & \eta_{5}\\
-\tfrac{\eta_{2}}{\sqrt{2}} & \tfrac{\eta_{1}}{\sqrt{2}} & 0 & \eta_{6}\\
0 & 0 & 0 & 0
\end{smallmatrix}\right).
\end{equation}
The inverse operation is denoted by $\vecse: \se \rightarrow \R^{6}.$ Note that this operation is consistent with the usual scalar product, i.e. for $\chi, \eta \in \se$ it holds 
\begin{equation}
\la \chi ,\eta \ra_{\Id}:= \tr(\chi^{\T}\eta) = \la \vecse(\chi), \vecse(\eta) \ra.
\end{equation}
Since $\SE$ is a Lie Groups regarding the matrix multiplication and $\R^{6}$ is a Lie Group regarding addition, we can understand the system \eqref{eq:const-acc-ass-2states} as a first-order differential equation on a product Lie Group 
\begin{equation}
\G := \SE \times \R^{6}.
\end{equation}
For two elements $G_{1}=(E_{1},v_{1}),G_{2} = (E_{2},v_{2})\in \G$ we define the left translation $L_{G_{1}}$ by $L_{G_{1}}G_{2}:=(E_{1}E_{2},v_{1}+v_{2}) \in \G.$  Since the tangent bundle $T\R^{6}$ can be identified with $\R^{6}$, we obtain the Lie algebra 
\begin{equation}
\g = \se \times \R^{6}.
\end{equation}
In turn, we can take down \eqref{eq:const-acc-ass-2states} compactly as
\begin{equation}\label{eq:const-acc-ass-G}
\dot{G}(t) = (E(t)\matse(v(t)), \mathbf{0}_{6})\,,
\end{equation}
where $E$ and $v$ will denote the first and second element of $G \in \G$, respectively.
On matrix Lie groups, one can express kinematics directly as matrix multiplication (cf.~\cite{zamani2012}), i.e.\ $\dot{E} = E\Gamma$ for $\Gamma\in \se, E \in \SE,$ which is not valid for general Lie groups. The rigorous way to describe kinematics is to use the {\em tangent map} (cf.~\cite{saccon2013second}) of the left translation which is given by the following proposition:
\begin{proposition} \label{prop1}
The tangent map of the left translation regarding $G=(E,v) \in \G$ at identity, i.e. $T_{\Id}L_{G}: \g \rightarrow T_{G}\G$, can be computed for  $\eta = (\eta_{1},\eta_{2})\in \g$ as
\begin{equation}
T_{\Id}L_{G}\eta = (E\eta_{1}, \eta_{2}) = L_{(E,0)}\eta =: G \eta.
\end{equation}
\end{proposition}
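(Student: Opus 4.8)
The plan is to compute the tangent map of the left translation $L_G$ directly from the product structure of $\G = \SE \times \R^6$, using the fact that the tangent map of a product of maps is the product of the tangent maps, and that left translation on an abelian group (here $\R^6$ under addition) has tangent map equal to the identity.

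First I would recall that for $G = (E,v) \in \G$ the left translation is $L_G(H) = L_{(E,v)}(F,u) = (EF, v+u)$ for $H = (F,u) \in \G$. Since $\G$ is a product Lie group, $L_G = L_E^{\SE} \times L_v^{\R^6}$, where $L_E^{\SE}(F) = EF$ is left translation on $\SE \subset \on{GL}_4$ and $L_v^{\R^6}(u) = v + u$ is translation on $\R^6$. The tangent map at the identity $\Id = (\eins_4, \mathbf{0}_6)$ therefore splits as $T_{\Id}L_G = T_{\eins_4}L_E^{\SE} \times T_{\mathbf{0}_6}L_v^{\R^6}$ acting on $\g = \se \times \R^6$.

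Next I would evaluate each factor. For the $\SE$-component: $\SE$ is an embedded submatrix group of $\on{GL}_4$, left translation $L_E^{\SE}(F) = EF$ is the restriction of a \emph{linear} map on the ambient space $\R^{4\times 4}$, so its derivative at any point is the same linear map; hence $T_{\eins_4}L_E^{\SE}\,\eta_1 = E\eta_1$ for $\eta_1 \in \se$, and one checks $E\eta_1 \in T_E\SE$ since $E^{-1}(E\eta_1) = \eta_1 \in \se$. For the $\R^6$-component: $L_v^{\R^6}$ is an affine translation, so $T_{\mathbf{0}_6}L_v^{\R^6}\,\eta_2 = \eta_2$ for $\eta_2 \in \R^6$. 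Combining, $T_{\Id}L_G\,\eta = (E\eta_1, \eta_2)$. Finally I would observe that this equals $L_{(E,0)}\eta$ in the sense that the second component is left fixed (translation by $0$) and the first is multiplied by $E$, which justifies the shorthand $G\eta := (E\eta_1,\eta_2)$ announced in the notation table.

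This argument is essentially routine; the only point requiring a little care is the identification of the tangent spaces $T_{\Id}\G \cong \se \times \R^6$ and $T_G\G$ with their concrete realizations, so that "the derivative of a linear map is itself" is applied on the correct ambient linear space and the result is verified to land in $T_G\G$. I do not expect any genuine obstacle.
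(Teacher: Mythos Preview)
Your proposal is correct and essentially matches the paper's approach: the paper also computes $T_{\Id}L_G$ as the directional derivative of $L_G$, handling the $\SE$- and $\R^6$-components separately via the product structure. The only cosmetic difference is that the paper writes out the limit $\lim_{\tau\to 0^+}\tau^{-1}(L_{G_1}(G_2+\tau\eta)-L_{G_1}(G_2))$ explicitly (first for general $G_2$, then specializing to $G_2=\Id$), whereas you invoke the equivalent facts that the derivative of a linear map is itself and the derivative of an affine translation is the identity; this is the same computation in slightly different language.
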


With Proposition~\ref{prop1} we can write down \eqref{eq:const-acc-ass-G} as
\begin{equation}\label{eq:const-acc-ass-dotG}
\dot{G}(t) = T_{\Id}L_{G(t)} f(G(t)) = G(t) f(G(t)),
\end{equation}
where $f: \G \rightarrow \g$ is given by 
\begin{equation}\label{eq:const-acc-ass-def-f}
 f(G)=f((E,v))= (\matse(v), \mathbf{0}_{6}).
\end{equation}

\begin{remark}
During the further development, the notation $G\eta$ for a Lie group element $G \in \G$ and $\eta \in \g$ must always be understood as the tangent map of the left translation at identity. Similarly, we denote $G^{-1}\eta:=T_{\Id}L_{G}^{\ast}\eta$ for the dual of the tangent map of $L_{G}$  at identity. 
\end{remark}

\subsection{Optical Flow Induced by Ego-Motion}

The optical flow $u: \Omega \times T \rightarrow \R^{2}$ on an image sequence $\{I(t),t \in T\}$ can be computed in terms of the underlying scene structure as given by a depth map $d: \Omega \times T$ and the camera motion $E:T \rightarrow \SE,$ i.e.\ $E(t)= (R(t),w(t))$,  
where $R(t)$ and $w(t)$ denote the camera rotation and translation, respectively, by the following relation:
\begin{align}
\begin{split}
u (x, t; d(x,t), & (R(t),w(t)))  \\
 = &  \pi( R(t)^{\T}( \left(\begin{smallmatrix} x \\ 1 \end{smallmatrix}\right) d(x,t) - w(t)) ) - x,
 \end{split} \label{eq:flow_Rv}
\end{align}
whereas $\pi:\R^{3} \rightarrow \R^{2}$ is the projection $(x_{1},x_{2},x_{3})^{\T} \mapsto x_{3}^{-1}(x_{1},x_{2})^{\T}$ as depicted in Fig.~\ref{fig:camera-model}. Note that $x \in \R^{3}$ indicates
{\em inhomogenous} coordinates rather than {\em homogenous} coordinates on the projective space.

We can also express \eqref{eq:flow_Rv} directly in terms of $E(t)$. By adding the superscript $k$ for (discretized) pixels $x^{k}\in \Omega$, we obtain
\begin{equation}\label{eq:flow_E}
u(x^{k},t; d(x,t),E(t)) + x^{k} = \pi((E^{-1}(t)g_{k}(t))_{1:3}),
\end{equation}
where $g_{k}(t) := (d(x^{k},t) (x^{k})^{\T},  d(x^{k},t), 1)^{\T}$ 
denotes the data vector containing depth information of pixel~$x^{k}$ below. 

\begin{remark}
In the equation \eqref{eq:flow_E} we assumed a {\em static scene}, since we set the scene point $X$ constant in time.
\end{remark}

\begin{figure}[t]
\begin{center}
%% Creator: Inkscape inkscape 0.91, www.inkscape.org
%% PDF/EPS/PS + LaTeX output extension by Johan Engelen, 2010
%% Accompanies image file '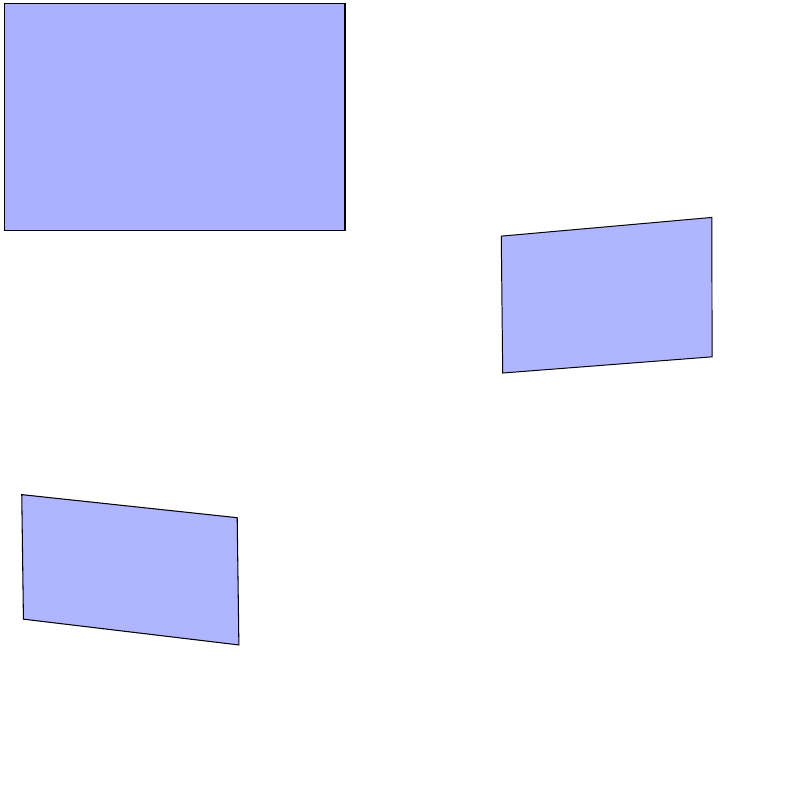' (pdf, eps, ps)
%%
%% To include the image in your LaTeX document, write
%%   \input{<filename>.pdf_tex}
%%  instead of
%%   \includegraphics{<filename>.pdf}
%% To scale the image, write
%%   \def\svgwidth{<desired width>}
%%   \input{<filename>.pdf_tex}
%%  instead of
%%   \includegraphics[width=<desired width>]{<filename>.pdf}
%%
%% Images with a different path to the parent latex file can
%% be accessed with the `import' package (which may need to be
%% installed) using
%%   \usepackage{import}
%% in the preamble, and then including the image with
%%   \import{<path to file>}{<filename>.pdf_tex}
%% Alternatively, one can specify
%%   \graphicspath{{<path to file>/}}
%% 
%% For more information, please see info/svg-inkscape on CTAN:
%%   http://tug.ctan.org/tex-archive/info/svg-inkscape
%%
\begingroup%
  \makeatletter%
  \providecommand\color[2][]{%
    \errmessage{(Inkscape) Color is used for the text in Inkscape, but the package 'color.sty' is not loaded}%
    \renewcommand\color[2][]{}%
  }%
  \providecommand\transparent[1]{%
    \errmessage{(Inkscape) Transparency is used (non-zero) for the text in Inkscape, but the package 'transparent.sty' is not loaded}%
    \renewcommand\transparent[1]{}%
  }%
  \providecommand\rotatebox[2]{#2}%
  \ifx\svgwidth\undefined%
    \setlength{\unitlength}{226.77165354bp}%
    \ifx\svgscale\undefined%
      \relax%
    \else%
      \setlength{\unitlength}{\unitlength * \real{\svgscale}}%
    \fi%
  \else%
    \setlength{\unitlength}{\svgwidth}%
  \fi%
  \global\let\svgwidth\undefined%
  \global\let\svgscale\undefined%
  \makeatother%
  \begin{picture}(1,1)%
    \put(0,0){\includegraphics[width=\unitlength,page=1]{camera-model.pdf}}%
    \put(0.57877297,0.92924718){\color[rgb]{0,0,0}\makebox(0,0)[lb]{\smash{$X=d(x(t))x(t)$}}}%
    \put(0,0){\includegraphics[width=\unitlength,page=2]{camera-model.pdf}}%
    \put(0.0604762,0.07811524){\color[rgb]{0,0,0}\makebox(0,0)[lb]{\smash{$E(t-1) = (I_3,\mathbf{0})$}}}%
    \put(0,0){\includegraphics[width=\unitlength,page=3]{camera-model.pdf}}%
    \put(0.44021244,0.25987137){\color[rgb]{0,0,0}\makebox(0,0)[lb]{\smash{$w(t)$}}}%
    \put(0.69845904,0.380212){\color[rgb]{0,0,0}\makebox(0,0)[lb]{\smash{$E(t) = (R(t),w(t))$}}}%
    \put(0.16945932,0.22174601){\color[rgb]{0,0,0}\makebox(0,0)[lb]{\smash{$x(t)$}}}%
    \put(0,0){\includegraphics[width=\unitlength,page=4]{camera-model.pdf}}%
    %\put(0.04661706,0.2714966){\color[rgb]{0,0,0}\makebox(0,0)[lb]{\smash{$x(t+1)$}}}%
    \put(0.6795404,0.6188313){\color[rgb]{0,0,0}\makebox(0,0)[lb]{\smash{$\tilde{x}(t+1)$}}}%
    \put(0,0){\includegraphics[width=\unitlength,page=5]{camera-model.pdf}}%
    \put(0.24355993,0.82833707){\color[rgb]{0,0,0}\makebox(0,0)[lb]{\smash{$x(t)$}}}%
    \put(0,0){\includegraphics[width=\unitlength,page=6]{camera-model.pdf}}%
    \put(0.07358641,0.74172439){\color[rgb]{0,0,0}\makebox(0,0)[lb]{\smash{$x(t+1)$}}}%
    \put(0.05669643,0.82163209){\color[rgb]{0,0,0}\makebox(0,0)[lb]{\smash{${\color{red}u(x(t),t)}$}}}%
    \put(0,0){\includegraphics[width=\unitlength,page=7]{camera-model.pdf}}%
    \put(0.56444444,1.16416673){\color[rgb]{0,0,0}\makebox(0,0)[lb]{\smash{}}}%
  \end{picture}%
\endgroup%
\caption{Camera model for the monocular approach: A static scene point $X$ is projected onto the plane at $x(t)$ of the first camera $E(t-1)$ which is mounted at the origin with rotation $\eins_{3}$ such that $X = d(x(t))x(t)$. By moving the camera into position $E(t)=(R(t),w(t))$, the scene point is projected onto $\pi(R^{\T}(t) (X - w(t) )) = x(t+1)$ 
which is at the same (relative) image position as $\tilde{x}(t+1)$ on the second image plane. The induced optical flow is given by the difference $u(x(t),t) = x(t+1)-x(t)$.
}
\label{fig:camera-model}
\end{center}
\end{figure}

\section{Minimum Energy Filter Derivation}
\label{sec:MEF-Derivation}

In this section, we will determine the problem of camera motion estimation with filtering equations, and we will summarize the most important steps for the derivation of the minimum energy filter.

By denoting the left hand side of \eqref{eq:flow_E} by $y_{k} \in \R^{2}$ which is the observation, i.e.
 \begin{equation}\label{eq:def-yk}
 y_{k}(t):= u(x^{k},t; d(x,t),E(t)) + x^{k} \,,
 \end{equation}
 and defining 
\begin{equation} \label{eq:def-hk}
h_{k}(E,t):=\pi((E^{-1}(t)g_{k}(t))_{1:3})
\end{equation}
as the right hand side of~\eqref{eq:flow_E}, together with~\eqref{eq:const-acc-ass-2states} and~\eqref{eq:const-acc-ass-dotG}, we obtain the following state- and observation system by setting $G  = (E,v) \in \G$:

\begin{align}
\dot{G}(t) = & G(t) (f(G(t)) + \delta(t)), G(t_{0}) = G_{0}\,, \,\,\,\mbox{(state)} \label{eq:state-eq}\\
y_{k} = & h_{k}(E(t),t) + \epsilon_{k}(t), k \in [n]\,, \,\,\mbox{(observation)} \label{eq:observation-eq}
\end{align}
where $f(G)$ is defined as in \eqref{eq:const-acc-ass-def-f} and $n$ denotes a (fixed) number of specific image pixels. The functions $\delta: T \rightarrow \mf g$ and $\epsilon_{k}: T \rightarrow \R^{2}$, $k\in [n]$ are noise processes that model deviations from state and observations, respectively. Here, $T$ denotes a continuous time interval, e.g.\ $T = \R_{\geq 0}$.

\subsection{Energy Function}

Given a depth map, which is contained in the function $g_{k}(t)$ in \eqref{eq:def-hk} and the optical flow $u_{k}$ in terms of the observations $y_{k}$ in \eqref{eq:def-yk}, we want to find the camera motion and its velocity in terms of $G(t) \in \G$ such that the observation error $\epsilon_{k}$ in \eqref{eq:observation-eq} is minimal and such that \eqref{eq:state-eq} is fulfilled with minimal deviations $\delta(t)$ for all $t \in T.$ 

To this end, we consider the penalization of $\delta = (\delta_{1},\delta_{2})\in \g$ and $\epsilon = \{\epsilon_{k}\}_{k=1}^n$ by a quadratic function $c:\g \times \R^{2n}\times T \times T \rightarrow \R$ given as
\begin{equation}\label{eq:penalty-function_no_decay}
\begin{split}
c(\delta, \epsilon, \tau, t):= &\tfrac{1}{2}\Bigl( \lVert \vecse(\delta_{1}(\tau)) \rVert_{S_{1}}^{2}  \\
& \quad + \lVert \delta_{2}(\tau) \rVert_{S_{2}}^{2} + \sum_{k=1}^{n} \lVert \epsilon_{k}(\tau)\rVert_{Q}^{2} \Bigr),
\end{split}
\end{equation}
where $S_{1},S_{2} \in \R^{6\times 6}$ and $Q\in \R^{2\times 2}$ are symmetric, positive definite weighting matrices. 
From \cite{saccon2013second} we adopt the idea of a decay rate $\alpha>0$, 
and thus we introduce the weighting factor $e^{-\alpha(t-t_{0})}$ on the right-hand side of~\eqref{eq:penalty-function_no_decay}:
\begin{align}\label{eq:penalty-function}
\begin{split}
c(\delta, \epsilon, \tau, t):= &\tfrac{1}{2}e^{-\alpha(t-t_{0})}\Bigl( \lVert \vecse(\delta_{1}(\tau)) \rVert_{S_{1}}^{2}  \\
& \quad + \lVert \delta_{2}(\tau) \rVert_{S_{2}}^{2} + \sum_{k=1}^{n} \lVert \epsilon_{k}(\tau)\rVert_{Q}^{2} \Bigr).
\end{split}
\end{align}
Based on the penalty function \eqref{eq:penalty-function}, we define the energy:
\begin{equation}\label{eq:energy-function}
\mc{J}(\delta, \epsilon ,t_{0},t) := m_{0}(G(t),t,t_{0}) + \int_{t_{0}}^{t} c(\delta, \epsilon, \tau, t) \, d\tau\,,
\end{equation}
where $m_{0}$ is a quadratic penalty function for the initial state. For our model we set
\begin{equation}
m_{0}(G,t,t_{0}) := \tfrac{1}{2} e^{-\alpha(t-t_{0})} \la G-\Id, G-\Id \ra_{\Id},
\end{equation}
where the difference is canonical, i.e. $G-\Id = (E-\eins_{4},v)$ for $G=(E,v).$

\begin{remark}
Instead of using two quadratic forms with matrices $S_{1}, S_{2}$, we can use more generally a symmetric and positive weighting matrix $S \in \R^{12\times 12}$ if we want to couple $\delta_{1}$ and $\delta_{2}$. In  the upper case we find that $S = \bsm S_{1} & 0 \\ 0 & S_{2} \esm.$
\end{remark}

\subsection{Optimal Control Problem}
The optimal control theory allows us to determine the optimal control input $\delta:T \rightarrow \mf g$ that minimizes the energy 
$\mc{J}(\delta,\epsilon(G(t),t),t_{0},t)$
for each $t\in T$ subject to the state constraints \eqref{eq:state-eq}. To be precise, we want to find for all $t \in T$ and fixed $G(t)$ the control input $\delta \vert_{[t_{0},t]}$ defining
\begin{equation}\label{eq:acc-const-val-fun}
\mc{V}(G(t),t):= \min_{\delta \vert_{[t_{0},t]}} \mc{J}(\delta,\epsilon(G(t),t),t_{0},t), \,\mbox{s.t. \eqref{eq:state-eq}\,.} 
\end{equation}
The optimal trajectory is 
\begin{equation}\label{eq:optimality_cond}
{G^{\ast}}(t) := \argmin_{G(t) \in \G} \mathcal{V}(G(t),t)\,,
\end{equation}
 for all $t \in T$ and $\mathcal{V}(G,t_{0}) = m_{0}(G_{0},t_{0},t_{0}).$ This problem is a classical optimal control problem, for which the standard Hamilton-Jacobi theory \cite{jurdjevic1997,athans1966optimal} under appropriate conditions results in the well-known {\em Hamilton-Jacobi-Bellman} equation. Pontryagin \cite{athans1966optimal} proved that the minimization of the Hamiltonian provides a solution to the corresponding optimal control problem ({\em Pontryagin's Minimum Principle}).

However, since $\mc G$ is a non-compact Riemannian manifold, we cannot apply the classical Hamilton-Jacobi theory for real-valued problems (cf.~\cite{athans1966optimal}).
Instead we follow the approach of Saccon {\em et al.}~\cite{saccon2013second} who derived a {\em left-trivialized optimal Hamiltonian} based on control theory on Lie groups \cite{jurdjevic1997}. 
This left-trivialized optimal Hamiltonian is defined by $\tilde{\mathcal H}^{-}: \mc G \times \mf g \times \mf g \times T \rightarrow \R,$
\begin{equation}\label{eq:preHamiltonian}
\tilde{\mathcal{H}}^{-}(G,\mu, \delta,  t) := c(\delta, \epsilon(G,t), t_0,t) - \la \mu, F(G(t))+\delta \ra_{\Id} .
\end{equation}
The minimization of \eqref{eq:preHamiltonian} w.r.t.\ the variable $\delta = (\delta_{1},\delta_{2})$ leads \cite[Proposition~4.2]{saccon2013second} to the optimal Hamiltonian 
\begin{equation}\label{eq:optimal-hamiltonian}
\mathcal{H}^{-}(G,\mu,t):= \tilde{\mathcal{H}}^{-}(G,\mu,\delta^{\ast},t)\,,
\end{equation}
where $\delta^{\ast}=(\delta_{1}^\ast,\delta_{2}^\ast)$ is given by 
\begin{equation}
\begin{aligned}
  \vecse(\delta_{1}^{\ast}) &= e^{\alpha(t-\tau)} S_{1}^{-1} \vecse(\mu_{1}),\text{ and}\\ 
  \delta_{2}^{\ast} &= e^{\alpha(t-t_{0})} S_{2}^{-1} \mu_{2}\,.
\end{aligned}
\end{equation}
Examining the right-hand side of \eqref{eq:optimal-hamiltonian} in detail, we obtain
\begin{align}
\mathcal{H}^{-}& ((E,V), \mu, t) =  \tfrac{1}{2}e^{-\alpha(t-t_{0})} \Bigl( \sum_{k=1}^{n} \lVert y_{k}-h_{k}(E)\rVert_{Q}^{2} \Bigr) \nonumber \\
& - \tfrac{1}{2}e^{\alpha(t-t_{0})}\Bigl( \la  \mu_{1}, \matse(S_{1}^{-1}\vecse(\mu_{1})) \ra_{\Id}  \label{eq:left-trivial-hamilton}\\
& + \la \mu_{2}, S_{2}^{-1} \mu_{2} \ra\Bigr) - \la \mu_{1}, \matse(V)\ra_{\Id} \,,\nonumber
\end{align}
where we used $\epsilon(G(t),t)=\{y_{k}-h_{k}(E(t),t)\}_{k=1}^n$. Here we introduced on the left hand side the variable $G$ since the right hand side depends on $G = (E,v)$. 

In the next section, we will compute explicit ordinary differential equations regarding the optimal state ${E^{\ast}}(t)$ for each $t\in T$ that consists of different derivatives of the left trivialized Hamilton function \eqref{eq:left-trivial-hamilton}.

\subsection{Recursive Filtering Principle by Mortensen}

In order to find a recursive filter, we compute the total time derivative of the optimality condition on the value function, which is
\begin{equation}
\D_{1} \mathcal V({G^{\ast}},t) = 0\,,
\end{equation}
 for each $t \in T$. This equation must be fulfilled by an optimal solution ${G^{\ast}} \in \G$ of the filtering problem. Unfortunately, because the filtering problem is in general infinite dimensional, this leads to an expression containing derivatives of every order. In practice (cf. \cite{zamani2012,saccon2013second}), derivatives of third order and higher are neglected, since they require tensor calculus. Omitting these leads to a second-order approximation of the optimal filter. The following theorem is an adaption of \cite[Theorem~4.1]{saccon2013second}:
 
 \begin{theorem}\label{thm:minEnFilter_Acc}
The differential equations of the second-order Minimum Energy Filter for state~\eqref{eq:state-eq} and nonlinear observer model~\eqref{eq:observation-eq} are given by
\begin{align}
&(G^{\ast})^{-1}\dot{G}^{\ast} =   \bigl(f(G^{\ast}) - \matg(P(t)\vecg(r_{t}(G^{\ast})))\bigr), \nonumber \\
& G^{\ast}(t_{0}) =  \Id \,, \label{eq:optimalStateG} \\
 \begin{split}
& \dot{P}(t) =   -\alpha \cdot P + S^{-1} + C P + P C^{\T}\\
& - P 
\begin{pmatrix}
\sum_{k=1}^{n} (\tilde{\Gamma}_{\vecse(\on{Pr}(A_{k}(E^{\ast})))} + D_{k}(E^{\ast})) & \mathbf{0}_{6\times 6} \\
\mathbf{0}_{6\times 6} & \mathbf{0}_{6\times 6}
\end{pmatrix} P \,, \\
& P(t_{0})  =  \eins_{12}\,,
\end{split}\label{eq:optimalStateP}
\end{align}
where 
$r_{t}(G^{\ast}) :=\bigl(\sum_{k=1}^{n} \on{Pr}(A_{k}(E^{\ast}), \mathbf{0}_{6}\bigr) $
and 
\begin{align*} \label{eq:defA}
C(G^{\ast},t) := & \begin{pmatrix}
-\Psi(G^{\ast},t) & \eins_{6} \\
 \mathbf{0}_{6\times 6} & \mathbf{0}_{6\times 6}
 \end{pmatrix}\,,
 \end{align*}
 with 
 \begin{align}
 \begin{split}
 \Psi(G^{\ast},t):=&  \ad_{\se}^{\on{vec}} (f(G^{\ast})) + \tilde{\Gamma}^{\ast}_{\vecse(P\, r_{t}(G^{\ast}))} \,.
 \end{split}
 \end{align}
 The function $A_{k}:\SE \rightarrow \R^{4\times 4}$ is given by
 \begin{align}\label{eq:def-Ak}
 \begin{split}
 A_{k}(E)& = A_{k}(E,g_{k}):= 
\bigl(\kappa_{k}^{-1}\hat{I}  -  \kappa_{k}^{-2}  \hat{I}   E^{-1}  e_{3}^{4} g_{k}^\T   \hat{I} \bigr)^{\T} \\
& \quad \cdot  Q (y_{k} - h_{k}(E))  g_{k}^{\T}  E^{-\T},
 \end{split}
 \end{align}
 where $\kappa_{k}:=\kappa_{k}(E):= (e_{3}^{4})^{\T}E^{-1}g_{k}.$ The second-order operator $D_{k}:\SE \rightarrow \R^{6\times 6}$ is given by \eqref{eq:def_Dk}, see Appendix~\ref{appendix:proofs}.
 
The matrix valued functions $\tilde{\Gamma}_{z},\tilde{\Gamma}_{z}^{\ast}: \R^{6} \rightarrow  \R^{6\times 6}$ are obtained from the vectorization of the connection functions. Their components are given by $(\tilde \Gamma_{z})_{ij} := \sum_{k=1}^{6}\Gamma_{jk}^{i} z^{k}$ and $(\tilde \Gamma_{z}^{\ast})_{ik} := \sum_{j=1}^{6}\Gamma_{jk}^{i} z^{j}$ with $z \in \R^{6}$ and the Christoffel-Symbols $\Gamma_{jk}^{i}$ are given in Appendix~\ref{app:christoffel}.
\end{theorem}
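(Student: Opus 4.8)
The plan is to specialize the abstract second-order minimum energy filter of \cite[Theorem~4.1]{saccon2013second} to the product Lie group $\G = \SE \times \R^{6}$, the state function $f$ from \eqref{eq:const-acc-ass-def-f}, and the nonlinear observation $h_k$ from \eqref{eq:def-hk}. First I would recall the general recursion: differentiating the optimality condition $\D_1 \mc V(G^\ast,t) = 0$ in time, expanding via the Hamilton--Jacobi--Bellman equation with the left-trivialized Hamiltonian \eqref{eq:left-trivial-hamilton}, and truncating all derivatives of order $\geq 3$. This yields two coupled ODEs: one for the optimal state $G^\ast$, governed by the left-trivialized Riemannian gradient of $\mc H^{-}$, and one for the operator $P(t)$ (the inverse of the left-trivialized Hessian of $\mc V$), governed by a Riccati-type equation. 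The bulk of the proof is then identifying each abstract ingredient explicitly for our data.

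The key steps, in order, are: (i) compute the left-trivialized Riemannian gradient $r_t(G^\ast)$ of the ``innovation'' term $\sum_k \tfrac12 \lVert y_k - h_k(E)\rVert_Q^2$; this requires differentiating $h_k(E) = \pi((E^{-1}g_k)_{1:3})$ with respect to $E$ along left-translated directions $E\eta_1$, using the quotient structure of $\pi$ and the identity $\D_E(E^{-1})[E\eta_1] = -\eta_1 E^{-1}$. Collecting terms gives the matrix $A_k(E)$ in \eqref{eq:def-Ak} with the scalar $\kappa_k = (e_3^4)^\T E^{-1} g_k$ in the denominator (the two terms in $A_k$ coming from the product rule applied to $\kappa_k^{-1}$ times the numerator), and $r_t(G^\ast) = (\sum_k \on{Pr}(A_k(E^\ast)), \mathbf 0_6)$ after projecting onto $\se$; note the second block vanishes because $h_k$ is independent of $v$. (ii) Compute the second-order term $D_k(E^\ast)$ as the left-trivialized Hessian of the same innovation term (deferred to Appendix~\ref{appendix:proofs} and equation \eqref{eq:def_Dk}). (iii) Compute the linearization $C(G^\ast,t)$ of the drift $G f(G)$: since $f((E,v)) = (\matse(v),\mathbf 0_6)$, its left-trivialized derivative produces the block structure with the identity block $\eins_6$ (from $\p_v \matse(v)$) and $-\Psi$, where $\Psi$ combines the adjoint action $\ad_{\se}^{\on{vec}}(f(G^\ast))$ (from differentiating $\matse(v)$ along the $\SE$-factor) with the connection-induced term $\tilde\Gamma^\ast_{\vecse(P\,r_t)}$; the zero blocks reflect $\dot v = 0$. (iv) Assemble the Riccati equation \eqref{eq:optimalStateP}, where the decay $-\alpha P$ comes from the factor $e^{-\alpha(t-t_0)}$ in the penalty, $S^{-1}$ is the contribution of the control penalty inverse (with $S = \diag(S_1,S_2)$), $CP + PC^\T$ is the linearized-drift part, and the quadratic term in $P$ collects the observation curvature $\sum_k(\tilde\Gamma_{\vecse(\on{Pr}(A_k))} + D_k)$, again with nontrivial entries only in the upper-left $6\times 6$ block since observations do not see $v$. (v) Verify the initial conditions $G^\ast(t_0) = \Id$ and $P(t_0) = \eins_{12}$ against $m_0$ and the metric normalization.

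The main obstacle I anticipate is step (i)--(ii): carefully differentiating the nonlinear observation $h_k$ \emph{twice} in the left-trivialized sense on $\SE$, keeping track of the interplay between the projection $\pi$, the inverse $E \mapsto E^{-1}$, and the $\on{Pr}$ projection onto $\se$, and doing so in a way that is consistent with the chosen basis of $\se$ (the $\tfrac{1}{\sqrt 2}$-scaled skew part) so that the Christoffel symbols and the $\tilde\Gamma$, $\tilde\Gamma^\ast$ operators match. A secondary subtlety is making the bookkeeping of the decay exponentials $e^{\pm\alpha(t-t_0)}$ cancel correctly so that $\alpha$ appears cleanly only as $-\alpha P$; this is a direct transcription of the argument in \cite{saccon2013second} but must be checked for the product-group setting. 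Once these derivative computations are in hand, assembling \eqref{eq:optimalStateG}--\eqref{eq:optimalStateP} is a matter of organizing the blocks, with the $\R^6$-factor contributing only trivial and identity blocks.
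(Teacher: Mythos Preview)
Your overall plan matches the paper's proof: specialize \cite[Theorem~4.1]{saccon2013second} to $\G = \SE\times\R^6$, compute the left-trivialized gradient of $\mc H^-$ (this is the paper's Lemma~\ref{lemma1}, yielding $A_k$ and $r_t$), compute the six blocks of the vectorized Hessian evolution \eqref{eq:Zall} (Lemma~\ref{lemma2}), and finally pass from the Hessian matrix $K$ to $P(t) := e^{-\alpha(t-t_0)}K(t)^{-1}$ via the derivative-of-inverse formula, which is exactly what makes the decay appear as $-\alpha P$ and turns the $K$-equation into the Riccati form \eqref{eq:optimalStateP}.

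One attribution in your step~(iii) is wrong and will mislead you if taken literally. The adjoint term $\ad_{\se}^{\on{vec}}(f(G^\ast))$ in $\Psi$ does \emph{not} come from ``differentiating $\matse(v)$ along the $\SE$-factor'': $f((E,v)) = (\matse(v),\mathbf 0_6)$ is independent of $E$, so that derivative is zero. In the paper the adjoint arises from the \emph{connection} terms in \eqref{eq:Zall}: combining $\omega_{(G^\ast)^{-1}\dot G^\ast}\eta$ with the swap term $\omega^{\leftrightharpoons}_{\D_2\mc H^-}\eta$ and using torsion-freeness of the Levi--Civita connection collapses $\nabla_{-\D_2\mc H^-}\eta + \nabla_\eta(\D_2\mc H^-)$ to the Lie bracket $[f(G^\ast),\eta]$, whose vectorization is $\ad_{\se}^{\on{vec}}(f(G^\ast))$; the companion term $\tilde\Gamma^\ast_{\vecse(P\,r_t)}$ is the leftover connection piece coming from the $Z^{-1}r_t$ part of $(G^\ast)^{-1}\dot G^\ast$. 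The $\eins_6$ block, by contrast, really does come from $\D_1(\D_2\mc H^-) = -\D f$, i.e.\ from $\partial_v\matse(v)$, as you say; the paper then assembles $C$ via \eqref{eq:connection_C_B}. Keeping these provenances straight matters because the connection contribution already involves $P$ (through $P\,r_t$ inside $\tilde\Gamma^\ast$), whereas the $\D_1\D_2\mc H^-$ block does not.
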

This theorem will be proven at the end of the section.

\begin{remark}
A generalization of this theorem is published in Saccon {\em et al.} \cite{saccon2013second} for a larger class of filtering problems. However, the application of the theorem is not straightforward since the appearing expressions, e.g.\ exponential functor, cannot be evaluated directly. Furthermore, the adaption to nonlinear filtering problems has not been considered in the literature yet. Besides, we show how to find explicit expressions in terms of matrices for the general operators in \cite{saccon2013second}.
\end{remark}

In our previous work~\cite{berger2015second} we presented a theory regarding the case of constant \emph{velocity}. This theory can be derived directly from Theorem~\ref{thm:minEnFilter_Acc}
by neglecting the velocity $v$ i.e.\ the second component of $G=(E,v)\in \G$ (thus changing from Lie group $\SE\times\R^6 $ to $\SE$) and by setting $f(G)\equiv 0$.
In this case, the state and observation equations are reduced to
\begin{align}
\dot{E}(t) = & E(t)\delta(t), \, E(t_{0}) = E_{0}, \quad  \mbox{(state)}\,, \label{eq:state-eq-first}\\
y_{k} = & h_{k}(E(t),t) + \epsilon_{k}(t), \, k \in [n].  \quad  \mbox{(observation)} \label{eq:observation-eq-first}.
\end{align}
For the reader's convenience, we state the theory under the assumption of constant \emph{velocity} as a corollary:

\begin{corollary}\label{cor:first-order-filter}
The differential equations of the second-order Minimum Energy Filter for our state \eqref{eq:state-eq-first} and nonlinear observer model \eqref{eq:observation-eq-first} are given by
\begin{align}
& (E^{\ast})^{-1} \dot{E}^{\ast} = - \matse(P(t)\vecse(\sum_{k=1}^{n} \on{Pr}(A_{k}(E^{\ast})))),  \nonumber \\
& E^{\ast}(t_{0}) =  \Id\,, \label{eq:optimalStateE}\\
\begin{split}
&\dot{P}(t) =  -\alpha \cdot P + S_{1}^{-1}  \\
& -  \tilde{\Gamma}^{\ast}_{\vecse((E^{\ast})^{-1} \dot{E}^{\ast} )}  P - P( \tilde{\Gamma}^{\ast}_{\vecse((E^{\ast})^{-1} \dot{E}^{\ast} )} )^{\T} \\
& - P 
\bigl(\sum_{k=1}^{n} (\tilde{\Gamma}_{\vecse(\on{Pr}(A_{k}(E^{\ast})))} + D_{k}(E^{\ast})) \bigr)P,\\
& P(t_{0}) =  \eins_{6}.
\end{split}\label{eq:optimalStateP1}
\end{align}
\end{corollary}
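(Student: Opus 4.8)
The plan is to obtain Corollary~\ref{cor:first-order-filter} as a direct specialization of Theorem~\ref{thm:minEnFilter_Acc}, rather than re-deriving it from scratch. The key observation is that the constant-velocity model~\eqref{eq:state-eq-first}--\eqref{eq:observation-eq-first} is the restriction of the constant-acceleration model to the subgroup obtained by discarding the second factor: formally, we pass from $\G = \SE \times \R^6$ to $\SE$, which amounts to dropping the $v$-component of $G = (E,v)$ and the second component of $\delta = (\delta_1,\delta_2)$, and setting $f \equiv 0$ in~\eqref{eq:const-acc-ass-def-f} (since with no $v$-state there is no drift term coupling $E$ to a velocity). Under this reduction the penalty function~\eqref{eq:penalty-function} loses its $\lVert \delta_2 \rVert_{S_2}^2$ term, so the only noise weight surviving is $S_1$, and correspondingly the $12\times 12$ weight $S = \bsm S_1 & 0 \\ 0 & S_2 \esm$ collapses to the $6\times 6$ block $S_1$.

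First I would track what happens to each ingredient of Theorem~\ref{thm:minEnFilter_Acc}. The optimal-state equation~\eqref{eq:optimalStateG} reads $(G^\ast)^{-1}\dot G^\ast = f(G^\ast) - \matg(P\,\vecg(r_t(G^\ast)))$; since $f \equiv 0$ and $r_t(G^\ast) = (\sum_k \on{Pr}(A_k(E^\ast)), \mathbf{0}_6)$ has a trivial second block, the first block of this equation is exactly~\eqref{eq:optimalStateE}, with $\matg$ and $\vecg$ replaced by their $\SE$-counterparts $\matse$ and $\vecse$. For the Riccati equation~\eqref{eq:optimalStateP}, I would examine the matrix $C(G^\ast,t) = \bsm -\Psi & \eins_6 \\ \mathbf{0} & \mathbf{0} \esm$: with $f\equiv 0$, the term $\ad_{\se}^{\on{vec}}(f(G^\ast))$ inside $\Psi$ vanishes, leaving $\Psi = \tilde\Gamma^\ast_{\vecse(P\,r_t(G^\ast))}$. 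Projecting the block-structured Riccati equation onto its upper-left $6\times 6$ block, the terms $CP + PC^\T$ contribute $-\Psi P - P\Psi^\T$ (the off-diagonal $\eins_6$ blocks of $C$ multiply into the now-absent velocity block of $P$ and drop out), and the measurement term $-P\,\mathrm{diag}(\sum_k(\tilde\Gamma_{\vecse(\on{Pr}(A_k))} + D_k), \mathbf{0})\,P$ restricts to $-P(\sum_k(\tilde\Gamma_{\vecse(\on{Pr}(A_k(E^\ast)))} + D_k(E^\ast)))P$. Finally, observing from~\eqref{eq:optimalStateE} that $\vecse(P\,r_t(G^\ast)) = \vecse(-(E^\ast)^{-1}\dot E^\ast)$, i.e. $P\,r_t(G^\ast) = \vecse((E^\ast)^{-1}\dot E^\ast)$ up to sign conventions consumed into $\tilde\Gamma^\ast$, the term $-\Psi P - P\Psi^\T$ becomes $-\tilde\Gamma^\ast_{\vecse((E^\ast)^{-1}\dot E^\ast)}P - P(\tilde\Gamma^\ast_{\vecse((E^\ast)^{-1}\dot E^\ast)})^\T$, which is exactly the middle line of~\eqref{eq:optimalStateP1}. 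The initial conditions $G^\ast(t_0) = \Id$, $P(t_0) = \eins_{12}$ restrict to $E^\ast(t_0) = \Id$, $P(t_0) = \eins_6$, and $S^{-1}$ restricts to $S_1^{-1}$, which closes the identification.

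The step I expect to require the most care is the block-decomposition of the Riccati equation~\eqref{eq:optimalStateP} and the bookkeeping of sign conventions linking $\vecg(r_t(G^\ast))$, the definition of $\Psi$ via $\tilde\Gamma^\ast$, and the expression $\vecse((E^\ast)^{-1}\dot E^\ast)$ appearing in~\eqref{eq:optimalStateP1}; one must check that the $\vecg/\matg$ machinery on $\g = \se\times\R^6$ genuinely block-diagonalizes consistently with the product metric $S = \mathrm{diag}(S_1,S_2)$ and that the Christoffel symbols $\Gamma^i_{jk}$ of $\G$ restrict to those of $\SE$ on the relevant index range (this is where the product structure of the Levi-Civita connection, $\nabla_{G\xi}G\eta = G\omega_\xi\eta$ with $\omega$ acting factor-wise, is invoked). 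Modulo this routine but notation-heavy verification, no genuinely new argument is needed: the corollary is the $f\equiv 0$, $\R^6$-factor-suppressed shadow of the theorem, and the proof is complete once the specialization of each displayed equation has been checked term by term.
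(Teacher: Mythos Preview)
Your proposal is correct and follows precisely the route the paper indicates: the text immediately preceding the corollary says it ``can be derived directly from Theorem~\ref{thm:minEnFilter_Acc} by neglecting the velocity $v$ \ldots\ (thus changing from Lie group $\SE\times\R^6$ to $\SE$) and by setting $f(G)\equiv 0$'', and offers no further argument. You have simply filled in the term-by-term specialization that the paper leaves to the reader.

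One small point of phrasing: when you write ``projecting the block-structured Riccati equation onto its upper-left $6\times 6$ block'', this could be misread as extracting the $(1,1)$-block of the full $12\times 12$ equation~\eqref{eq:optimalStateP}, which would \emph{not} be valid --- the off-diagonal $\eins_6$ in $C$ couples the blocks, and the $12\times 12$ Riccati flow does not decouple. What you (and the paper) actually mean is to rerun the entire derivation of Theorem~\ref{thm:minEnFilter_Acc} on the smaller Lie group $\SE$ from the outset, so that $P\in\R^{6\times 6}$, $S=S_1$, and the $\eins_6$ off-diagonal of $C$ simply never arises. Your surrounding prose (``now-absent velocity block of $P$'') shows you have the right picture; just make the wording unambiguous. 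The sign bookkeeping you single out is indeed the only nontrivial check: tracing the proof of Lemma~\ref{lemma2} one finds $\Psi=\tilde\Gamma^\ast_{-P\vecse(r_t)}=\tilde\Gamma^\ast_{\vecse((E^\ast)^{-1}\dot E^\ast)}$, which yields exactly the signs displayed in~\eqref{eq:optimalStateP1}.
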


\begin{remark}
We compare the computational complexity for the cases of constant \emph{velocity} and constant \emph{acceleration}.
By considering the difference between Theorem \ref{thm:minEnFilter_Acc} and Corollary \ref{cor:first-order-filter}, we see that the only differences are a larger state space and the occurrence of the additional operator
$f(G^{\ast})$ in \eqref{eq:defA}. However, this does not change the computational effort significantly. Thus, we suggest to use the second-order minimum energy filter since it is more robust but computational only slightly more complex as we will see in the experiments.
\end{remark}

Before we will turn to proving Theorem \ref{thm:minEnFilter_Acc}, we first provide some lemmas that are based on the general approach of \cite{saccon2013second}. However, we cannot use the main result of \cite{saccon2013second} directly, since the appearing general operators are complicated to evaluate. Instead, we provide the corresponding expressions in such a way that they can be easily implemented. Thus, following \cite[Eq. (37)]{saccon2013second} the estimate of the optimal state $G^{\ast}$ is given by
\begin{equation}\label{eq:ode-optimal-state}
\begin{split}
(G^{\ast})^{-1}&\dot{G}^{\ast} = -\D_{2} \mc H^{-}(G^{\ast},0,t) \\
& - Z(G^{\ast},t)^{-1} \circ (G^{\ast})^{-1} \D_{1} \mathcal H^{-}(G^{\ast}, 0, t)\,.
\end{split}
\end{equation}
This expression contains the second-order information matrix $Z(G,t): \g \rightarrow \g$ of the value function $\mc{V}$ as defined in \eqref{eq:acc-const-val-fun}, defined through
\begin{equation}\label{eq:defZ}
Z(G,t) \circ\eta = G^{-1} \circ \Hess_{1} V(G,t)[ G \eta ] \,.
\end{equation}
 An explicit expression for the gradient of the Hamiltonian in \eqref{eq:ode-optimal-state} is provided in the following lemma:

\begin{lemma} \label{lemma1}
The Riemannian gradient $\D_{1} \mc H^{-}(G,\mu,t)$ on $T_{G}\G$ for $G = (E,v)$ can be calculated as
\begin{align}
\begin{split} \label{eq:grad1H}
\D_{1} & \mc H^{-}(G,\mu,t)  \\
= & G \Bigl(e^{-\alpha(t-t_{0})} \sum_{k=1}^{n} \on{Pr}(A_{k}(E)), -\vecse(\mu_{1})\Bigr), 
\end{split}
\end{align}
where the function $A_{k}(E)=A_{k}(E,g_{k}):\SE \times \R^{4} \rightarrow \on{GL}_{4}$ is defined in \eqref{eq:def-Ak}.
\end{lemma}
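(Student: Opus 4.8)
The plan is to compute the Riemannian gradient of $\mathcal{H}^-$ with respect to its first argument by differentiating the explicit expression~\eqref{eq:left-trivial-hamilton} for the left-trivialized Hamiltonian. Only two terms in~\eqref{eq:left-trivial-hamilton} depend on $G = (E,v)$: the data term $\tfrac12 e^{-\alpha(t-t_0)} \sum_k \lVert y_k - h_k(E)\rVert_Q^2$, which depends on $E$ only, and the term $-\la \mu_1, \matse(V)\ra_{\Id}$, which depends on $v$ only (linearly). So I would split the computation into a partial gradient on $\SE$ and a partial Euclidean gradient on $\R^6$, and then assemble them via the product-Lie-group gradient formula stated in the ``Riemannian Gradient'' paragraph of the notation section.

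For the $\R^6$-component the work is trivial: differentiating $-\la \mu_1,\matse(v)\ra_{\Id} = -\la \vecse(\mu_1), v\ra$ in $v$ gives the Euclidean gradient $-\vecse(\mu_1)$, which explains the second slot on the right-hand side of~\eqref{eq:grad1H}. For the $\SE$-component I would first reduce to differentiating a single summand $\tfrac12 \lVert y_k - h_k(E)\rVert_Q^2$ with $h_k(E) = \pi((E^{-1}g_k)_{1:3})$ from~\eqref{eq:def-hk}. The chain rule requires: (i) the derivative of $E \mapsto E^{-1}$ in a left-translated direction $E\eta_1$, namely $\D(E^{-1})[E\eta_1] = -\eta_1 E^{-1}$; (ii) the Jacobian of the projection $\pi$, which is explicit and involves the factor $\kappa_k = (e_3^4)^\T E^{-1}g_k$ in the denominator (this is where the $\kappa_k^{-1}$ and $\kappa_k^{-2}$ terms in the definition~\eqref{eq:def-Ak} of $A_k$ come from); and (iii) transporting the resulting linear functional on $\se$ back to a Riemannian gradient using the identification $\la\chi,\eta\ra_{\Id} = \la\vecse(\chi),\vecse(\eta)\ra$ together with left-invariance of the metric, i.e.\ $\la \D_E f, E\eta_1\ra_{\eins_4} = \la E^{-1}\D_E f, \eta_1\ra_{\eins_4}$. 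Collecting these factors should produce exactly $A_k(E)$ as a $4\times 4$ matrix; the projection $\on{Pr}$ onto $\se$ appears because the Riemannian gradient on the submanifold $\SE \subset \on{GL}_4$ is the orthogonal projection of the ambient $\R^{4\times4}$ gradient onto the tangent space $T_E\SE$, equivalently onto $\se$ after left translation. Finally, multiplying through by the common scalar $e^{-\alpha(t-t_0)}$ and by $G$ (the tangent map $T_{\Id}L_G$ of Proposition~\ref{prop1}) assembles~\eqref{eq:grad1H}.

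The main obstacle is bookkeeping rather than conceptual: carefully tracking how the inverse-transpose factors $E^{-\T}$, the vector $g_k$, the residual $Q(y_k - h_k(E))$, and the two $\kappa_k$ terms combine after applying the quotient rule to $\pi$ and then left-translating. In particular one must be attentive to where transposes land, since $A_k$ as written in~\eqref{eq:def-Ak} already carries a transpose on the first parenthesized factor, and to the fact that $h_k$ takes values in $\R^2$ while $\pi$ acts on the first three coordinates of $E^{-1}g_k$, so the $3\times 4$ / $4\times 4$ shapes have to be handled consistently (e.g.\ via the selector $e_3^4$ and implicit embeddings). Once the single-pixel gradient is identified as $e^{-\alpha(t-t_0)}\on{Pr}(A_k(E))$ in the $\se$-slot, summing over $k\in[n]$ and appending the $-\vecse(\mu_1)$ component in the $\R^6$-slot finishes the proof; I expect no subtlety in the summation or in the final application of the product-group gradient formula.
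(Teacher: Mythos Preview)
Your proposal is correct and follows essentially the same route as the paper: split the gradient into the $\SE$- and $\R^6$-components, handle the $v$-part trivially via $-\la\mu_1,\matse(v)\ra_{\Id}=-\la\vecse(\mu_1),v\ra$, and for the $E$-part differentiate $h_k(E)=\kappa_k^{-1}\hat I E^{-1}g_k$ with the quotient rule (producing the $\kappa_k^{-1}$ and $\kappa_k^{-2}$ terms), substitute $\xi=E\eta_1$, rewrite the result as $\la A_k(E),\eta_1\ra_{\Id}$ via the trace, and project onto $\se$ using $\on{Pr}$. The paper's proof is exactly this computation written out in full, including the explicit formula for $\D h_k(E)[\xi]$ and the trace manipulations that identify $A_k(E)$; your outline anticipates all of these steps accurately.
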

By insertion of \eqref{eq:grad1H} in \eqref{eq:ode-optimal-state} and usage of the definition of $r_{t}(G^{\ast})$ from Theorem~\ref{thm:minEnFilter_Acc} we obtain
\begin{align}\label{eq:ode-optimal-state1}
\begin{split}
& (G^{\ast})^{-1} \dot{G}^{\ast}  =  -\D_{2} \mc H^{-}(G^{\ast},0,t) \\
&  \quad - e^{-\alpha(t-t_{0})} Z(G^{\ast},t)^{-1} \circ r_{t}(G^{\ast}).
\end{split}
\end{align}

Following the calculus in \cite{saccon2013second}, the evolution equation for the trivialized Hessian $Z(G,t): \mf g \rightarrow \mf g^{\ast}$ is given by
\begin{equation}\label{eq:Zall}
\begin{aligned}
\frac{d}{dt} & Z( G^{\ast}(t),t)\\ %\label{eq:Zfirst} \\
\approx &  Z(G^{\ast},t) \circ \omega_{(G^{\ast})^{-1} \dot{G^{\ast}}} \\
& + Z(G^{\ast},t) \circ  \omega_{\D_{2}\mc H^{-}(G^{\ast},0,t)}^{\leftrightharpoons}\\
& +  \omega_{(G^{\ast})^{-1}\dot{G^{\ast}}}^{\ast} \circ Z(G^{\ast},t) \\
& + \omega_{\D_{2}\mc H^{-}(G^{\ast},0,t)}^{\leftrightharpoons \ast}\circ Z(G^{\ast},t) \\
& +T_{\on{Id}} L_{G^{\ast}}^{\ast} \circ \Hess_{1} \mc H^{-}(G^{\ast}, 0, t) \circ T_{\on{Id}}L_{G^{\ast}} \\
& + T_{\on{Id}} L_{G^{\ast}}^{\ast} \circ \D_{2}(\D_{1} \mc H^{-})(G^{\ast},0,t) \circ Z(G^{\ast},t) \\
& + Z(G^{\ast},t) \circ \D_{1} (\D_{2} \mc H^{-})(G^{\ast},0,t) \circ T_{\on{Id}}L_{G^{\ast}} \\
& + Z(G^{\ast},t) \circ  \Hess_{2} \mc H^{-}(G^{\ast},0,t) \circ Z(G^{\ast},t)\,  . %\label{eq:hess2Z} 
\end{aligned}
\end{equation}
(cf. \cite[Eq. (51)]{saccon2013second}).

The ``swap''-operators $\omega_{\cdot}^{\leftrightharpoons}\cdot, \omega_{\cdot}^{\leftrightharpoons \ast}\cdot$ in this expression are defined in Section~\ref{sec:notation}, i.e.~$\omega^{\leftrightharpoons}_{\eta}\xi:= \omega_{\xi}\eta$ and $\la \omega_{\eta}^{\leftrightharpoons \ast}\xi, \chi \ra_{\Id} := \la \xi, \omega_{\eta}^{\leftrightharpoons} \chi \ra_{\Id} = \la \xi, \omega_{\chi}\eta \ra_{\Id}$.
By considering the standard basis of $\g$, there exists a matrix representation $K\in \R^{12\times12}$, such that for all $\eta = (\eta_{1}, \eta_{2}) \in \mf g$ we receive
\begin{equation} \label{eq:vectorization-Z}
\vecg(Z(G^{\ast},t) \circ \eta) = K(t) \vecg(\eta)\,.
\end{equation} 
Similarly to \cite{berger2015second} we need to evaluate the right-hand side of the evolution equation at $\eta \in \g$ and to vectorize it. The single expressions are shown in the following lemma.

 \begin{lemma}[Matrix representations of $Z$] \label{lemma2} \\ Let $Z({G^{\ast}},t):\g \rightarrow \g$ be the operator \eqref{eq:defZ}. Then there exists a matrix $K = K(t)\in \R^{12\times 12}$ yielding
 \begin{equation}\label{eq:vectorization_Z}
 \vecg(Z({G^{\ast}},t)(\eta)) = K(t) \vecg(\eta),
 \end{equation}
 and thus 
 \begin{align}
 \vecg(d/dt Z({G^{\ast}},t)(\eta)) = &  \dot{K}(t)\vecg(\eta)\, , \label{eq:vec-temp-der-Z} \\
 \vecg(Z^{-1}(G^{\ast},t)(\eta)) = &  K^{-1}(t)\vecg(\eta) \label{eq:vec-inv-Z}\, ,
 \end{align}
  as well as
\begin{enumerate}
	\item $\vecg(Z(G^{\ast},t) \circ \omega_{(G^{\ast})^{-1} \dot{G^{\ast}}}\eta  \\
	 + Z(G^{\ast},t) \circ  \omega_{\D_{2}\mc H^{-}(G^{\ast},0,t)}^{\leftrightharpoons}\eta) = K(t)B \vecg(\eta)$
	\item $\vecg(  \omega_{(G^{\ast})^{-1}\dot{G^{\ast}}}^{\ast} \circ Z(G^{\ast},t)\circ \eta  \\
	 + \omega_{\D_{2}\mc H^{-}(G^{\ast},0,t)}^{\leftrightharpoons \ast}\circ Z(G^{\ast},t)\circ \eta)= B^{\T}K(t) \vecg(\eta)$
	\item $\vecg(T_{\on{Id}} L_{G^{\ast}}^{\ast} \circ \Hess_{1} \mc H^{-}(G^{\ast}, 0, t)[T_{\on{Id}}L_{G^{\ast}}\eta]) =  e^{-\alpha(t-t_{0})} \\ \cdot \begin{pmatrix}
\sum_{k=1}^{n} (\tilde{\Gamma}_{\vecse(\on{Pr}(A_{k}(E)))} + D_{k}(E)) & \mathbf{0}_{6\times 6} \\
\mathbf{0}_{6\times 6} & \mathbf{0}_{6\times 6}
\end{pmatrix} \vecg(\eta)$
	\item $\vecg(Z(G^{\ast},t) \circ \D_{1} (\D_{2} \mc H^{-})(G^{\ast},0,t) \circ T_{\on{Id}}L_{G^{\ast}}\eta) \\ = -K(t) \begin{pmatrix}  
\mathbf{0}_{6\times 6} & \eins_{6} \\
\mathbf{0}_{6 \times 6} & \mathbf{0}_{6\times 6} 
\end{pmatrix}\vecg(\eta)$
	\item $\vecg(T_{\on{Id}} L_{G^{\ast}}^{\ast} \circ \D_{2}(\D_{1} \mc H^{-})(G^{\ast},0,t) \circ Z(G^{\ast},t)\circ \eta) \\  = -\begin{pmatrix}  
\mathbf{0}_{6\times 6} & \mathbf{0}_{6 \times 6}\\
 \eins_{6} &  \mathbf{0}_{6\times 6}
\end{pmatrix} K(t) \vecg(\eta)$
		\item $\vecg( Z({G^{\ast}},t)  (\Hess_{2}  \mathcal H^{-}({G^{\ast}},0,t)[ Z({G^{\ast}},t) (\eta)])) \\ =  - e^{\alpha(t-t_{0})} K(t)S^{-1}K(t) \vecg(\eta)\, ,$ \label{lem2-3}
 \end{enumerate}
 with $\tilde \Gamma_{\cdot}$, $\tilde \Gamma_{\cdot}^{\ast},$  and functions $A_{k},D_{k}$ from Theorem~\ref{thm:minEnFilter_Acc} and
\begin{equation}\label{eq:defB}
B := \bsm \Psi(G^{\ast},t) & \mathbf{0}_{6\times 6} \\
 \mathbf{0}_{6\times 6}  &  \mathbf{0}_{6\times 6} 
\esm \,,
\end{equation}
with $\Psi$ from Theorem~\ref{thm:minEnFilter_Acc}.
\end{lemma}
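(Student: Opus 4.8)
The plan is to first pin down the matrix $K(t)$ of the operator $Z$, and then to verify the six vectorized identities one by one by substituting the explicit data at hand: Lemma~\ref{lemma1} for $\D_{1}\mc{H}^{-}$, the left-trivialized Hamiltonian~\eqref{eq:left-trivial-hamilton} for $\D_{2}\mc{H}^{-}$ and the $\mu$-Hessian, and the Christoffel symbols of Appendix~\ref{app:christoffel} for the connection terms. Throughout I work in the frame of $\g$ for which $\vecg$ is the standard basis of $\R^{12}$; by the normalization of $\vecse$ (consistency of $\la\cdot,\cdot\ra_{\Id}$ with the Euclidean product) this frame is orthonormal, so metric duals become ordinary transposes, and $T_{\Id}L_{G^{\ast}}$ is a linear isometry $\g\to T_{G^{\ast}}\G$ with $T_{\Id}L_{G^{\ast}}^{\ast}\circ T_{\Id}L_{G^{\ast}}=\Id_{\g}$. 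For the preamble: by~\eqref{eq:defZ}, $Z(G^{\ast},t)=T_{\Id}L_{G^{\ast}}^{\ast}\circ\Hess_{1}V(G^{\ast},t)\circ T_{\Id}L_{G^{\ast}}$ is a composition of $\R$-linear maps, hence $\R$-linear on $\g\cong\R^{12}$, and its matrix in the $\vecg$-frame is the asserted $K(t)$; differentiating in $t$ gives~\eqref{eq:vec-temp-der-Z}, and invertibility of $Z(G^{\ast},t)$ near the optimum ($V(\cdot,t)$ locally strictly convex at $G^{\ast}$, so $\Hess_{1}V\succ0$) gives~\eqref{eq:vec-inv-Z} with matrix $K(t)^{-1}$, because vectorization carries composition of operators to matrix products.

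\textbf{Items 4, 5 and~\ref{lem2-3}.} These need only first derivatives of the explicit expressions. For item~\ref{lem2-3}, $\mc{H}^{-}$ is quadratic in $\mu$ with constant Hessian — read off the $-\tfrac12 e^{\alpha(t-t_{0})}(\la\mu_{1},\matse(S_{1}^{-1}\vecse(\mu_{1}))\ra_{\Id}+\la\mu_{2},S_{2}^{-1}\mu_{2}\ra)$ part of~\eqref{eq:left-trivial-hamilton} — equal to $-e^{\alpha(t-t_{0})}S^{-1}$ in the $\vecg$-frame, with no Christoffel correction since the $\mu$-fibre is flat; composing with $Z$ on both sides yields $-e^{\alpha(t-t_{0})}K(t)S^{-1}K(t)$. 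For items~4 and~5, I differentiate $\D_{2}\mc{H}^{-}$ once more in $G$, resp.\ $\D_{1}\mc{H}^{-}$ (taken from~\eqref{eq:grad1H}) once more in $\mu$: at $\mu=0$ the only surviving dependence is the term $-\la\mu_{1},\matse(v)\ra_{\Id}$ of~\eqref{eq:left-trivial-hamilton}, whose cross-derivative is $\mp\eins_{6}$ placed in the off-diagonal $\se$–$\R^{6}$ block; the left-translations cancel by the isometry property, and composing with $Z$ on the proper side reproduces the stated block matrices.

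\textbf{Items 1 and 2.} Here I use the vectorization dictionary $\vecg(\omega_{\chi}\eta)=\tilde{\Gamma}_{\vecg(\chi)}\vecg(\eta)$ and $\vecg(\omega_{\chi}^{\leftrightharpoons}\eta)=\tilde{\Gamma}^{\ast}_{\vecg(\chi)}\vecg(\eta)$, which follow directly from the defining formulas for $(\tilde{\Gamma}_{z})_{ij}$ and $(\tilde{\Gamma}^{\ast}_{z})_{ik}$, together with ``dual $=$ transpose''; moreover, since $\g=\se\times\R^{6}$ is a Riemannian product with flat second factor, the Christoffel symbols with an index $>6$ vanish, which accounts for the $\mathbf{0}$-blocks in $B$. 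I then combine the two subscripts using torsion-freeness $\omega_{\chi}\eta-\omega_{\eta}\chi=[\chi,\eta]$ together with~\eqref{eq:ode-optimal-state1} (which writes $(G^{\ast})^{-1}\dot{G}^{\ast}$ in terms of $\D_{2}\mc{H}^{-}$ and $Z^{-1}\circ r_{t}$) and the explicit $\D_{2}\mc{H}^{-}(G^{\ast},0,t)=-f(G^{\ast})$ obtained from~\eqref{eq:left-trivial-hamilton}: the difference of the $\se$-connection terms collapses to $\ad_{\se}^{\on{vec}}(f(G^{\ast}))$, and the surviving $Z^{-1}\circ r_{t}$ contribution is exactly the Christoffel term of $P\,r_{t}(G^{\ast})$ appearing in $\Psi$, so the bracketed operator is $B$ from~\eqref{eq:defB}; item~2 is the metric-adjoint of item~1, giving $B^{\T}K$. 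The care here is purely in keeping the four connection variants (connection, swap, and their duals) and the Levi-Civita sign conventions consistent.

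\textbf{Item 3 --- the main obstacle.} This is the genuine second-order computation. Since $\mc{H}^{-}(G,0,t)=e^{-\alpha(t-t_{0})}\cdot\tfrac12\sum_{k}\lVert y_{k}-h_{k}(E)\rVert_{Q}^{2}$ does not depend on $v$, the Hessian vanishes on the $\R^{6}$-factor (whence the $\mathbf{0}$-blocks), and on the $\se$-factor it is the Riemannian Hessian on $\SE$ of $E\mapsto e^{-\alpha(t-t_{0})}\tfrac12\sum_{k}\lVert y_{k}-h_{k}(E)\rVert_{Q}^{2}$, which I expand via $\la\Hess_{1}\mc{H}^{-}[\xi],\eta\ra=\D(\D_{1}\mc{H}^{-}[\xi])[\eta]-\D_{1}\mc{H}^{-}[\nabla_{\eta}\xi]$. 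The second summand pairs $\D_{1}\mc{H}^{-}$ --- whose $\se$-part is $e^{-\alpha(t-t_{0})}\sum_{k}\on{Pr}(A_{k}(E))$ by Lemma~\ref{lemma1} --- against $\nabla_{\eta}\xi$, and vectorizes through the Christoffel symbols to the term $\tilde{\Gamma}_{\vecse(\on{Pr}(A_{k}(E)))}$. The first summand is the hard part: one differentiates the matrix-valued map $A_{k}(\cdot)$ of~\eqref{eq:def-Ak} once more along a left-invariant direction, i.e.\ carries the product and chain rules through $E\mapsto E^{-1}$, through $\kappa_{k}=(e_{3}^{4})^{\T}E^{-1}g_{k}$, and through $h_{k}=\pi((E^{-1}g_{k})_{1:3})$; collecting and vectorizing the resulting terms is precisely the definition of the $6\times6$ operator $D_{k}(E)$ in~\eqref{eq:def_Dk}. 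I expect essentially all of the labour, and the only real subtlety --- deciding which differentiation picks up a connection term and tracking the left-translation operators --- to sit in this last identity; once $D_{k}$ is in hand, multiplying by $e^{-\alpha(t-t_{0})}$ and assembling the block-diagonal $12\times12$ matrix finishes item~3, and with it the lemma.
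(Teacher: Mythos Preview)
Your plan is correct and follows essentially the same route as the paper's proof: linearity of $Z$ for the preamble; torsion-freeness of the Levi-Civita connection combined with~\eqref{eq:ode-optimal-state1} to collapse item~1 into the adjoint-plus-Christoffel form (with item~2 obtained by duality); direct reading of the $\mu$-quadratic part for item~\ref{lem2-3}; the single cross-term $-\la\mu_{1},\matse(v)\ra$ for items~4 and~5; and, for item~3, differentiation of $A_{k}$ along left-invariant directions to produce $D_{k}$, with the Christoffel contribution giving $\tilde{\Gamma}_{\vecse(\on{Pr}(A_{k}))}$. The only cosmetic difference is that the paper computes item~3 via $\Hess_{1}\mc{H}^{-}[G\eta]=\nabla_{G\eta}\D_{1}\mc{H}^{-}$ and then applies the vectorization formula~\eqref{eq:vec-Levi-Civita-general} for the covariant derivative of a non-constant field, whereas you use the equivalent formula $\la\Hess f[\xi],\eta\ra=\D(\D f[\xi])[\eta]-\D f[\nabla_{\eta}\xi]$; both split into the same $\tilde{\Gamma}+D_{k}$ pieces.
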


With these lemmas we are able to prove our main result in Theorem~\ref{thm:minEnFilter_Acc}:

\begin{proof}[of Theorem \ref{thm:minEnFilter_Acc}]
We can easily compute the differential of Hamiltonian in \eqref{eq:left-trivial-hamilton} which is
\begin{equation}\label{eq:D2hamiltonian}
- \D_{2} \mc H^{-}(G^{\ast},0,t) =  \bigl(\matse(v^{\ast}), \mathbf{0} \bigr) = f(G^{\ast}) \,.
\end{equation}
By inserting expression~\eqref{eq:D2hamiltonian} into the optimal state equation~\eqref{eq:ode-optimal-state1} together with the definition of the operator 
$\vecg(Z(G^{\ast},t)^{-1}\circ G^{\ast}\eta) = K^{-1}(t)\vecg(\eta)$, 
we find that
\begin{align}
(& G^{\ast})^{-1} \dot{G}^{\ast} = f(G^{\ast}) \nonumber \\
& - e^{-\alpha(t-t_{0})}\matg\bigl(\vecg(Z(G^{\ast},t)^{-1}\circ r_{t}(G^{\ast})) \bigr) \nonumber \\
= & f(G^{\ast}) - e^{-\alpha(t-t_{0})} \matg\bigl(K^{-1}(t) \vecg(r_{t}(G^{\ast}))\bigr)\,. \label{eq:op-state-withK}
\end{align}
The application of the $\vecg-$operation onto the equation \eqref{eq:Zall}\, evaluated for a direction $\eta$, together with Lemma~\ref{lemma2} results in
\begin{align}\label{eq:odeK}
\begin{split}
\dot{K}&(t) \vecg(\eta) = \Bigl[ K(t)B + B^{\T}K(t) \\
&+ e^{-\alpha(t-t_{0})} \bsm
\sum_{k=1}^{n} (\tilde{\Gamma}_{\vecse(\on{Pr}(A_{k}(E)))} + D_{k}(E)) & \mathbf{0}_{6\times 6} \\
\mathbf{0}_{6\times 6} & \mathbf{0}_{6\times 6}
\esm \\
& -K(t) \bsm \mathbf{0}_{6\times 6} & \eins_{6} \\ \mathbf{0}_{6\times 6} & \mathbf{0}_{6\times 6}   \esm
- \bsm \mathbf{0}_{6\times 6} & \mathbf{0}_{6\times 6}  \\ \eins_{6} & \mathbf{0}_{6\times 6}   \esm K(t) \\
&- e^{\alpha (t-t_{0})} K(t)S^{-1}K(t) \Bigr] \vecg(\eta),
\end{split}
\end{align}
where on the right-hand side we assume that $K(t)$ is an approximation of the vectorized operator $Z(G^{\ast}(t),t).$ This is the reason why we replace the approximation by an equality sign in \eqref{eq:odeK}.
With a change of variables (cf. \cite{saccon2013second}) 
\begin{equation}\label{eq:defP}
P(t):= e^{-\alpha(t-t_{0})}K(t)^{-1}\,,
\end{equation}
and the formula for the derivative of the inverse of a matrix \cite{petersen2008matrix},
we obtain
\begin{align}\label{eq:odeP}
\dot{P}(t) =&  - \alpha e^{-\alpha(t-t_{0})}K(t)^{-1} - e^{-\alpha(t-t_{0})}K(t)^{-1} \dot{K}(t) K(t)^{-1} \nonumber \\
= & -\alpha P(t) -  e^{\alpha(t-t_{0})}P(t) \dot{K}(t) P(t)\,.
\end{align}
Insertion of \eqref{eq:odeK} (after omitting the direction $\vecg(\eta)$ that was chosen arbitrarily) into \eqref{eq:odeP} leads to the differential equation \eqref{eq:optimalStateP} in Theorem~\ref{thm:minEnFilter_Acc}.
Therefore, we also find that  
\begin{equation}\label{eq:connection_C_B}
C(G^{\ast},t) =  \bsm
\mathbf{0}_{6\times 6} & \eins_{6} \\
\mathbf{0}_{6 \times 6} & \mathbf{0}_{6\times 6}
\esm  - B(t) \,. 
\end{equation}

The differential equation of the optimal state \eqref{eq:optimalStateG} follows from inserting \eqref{eq:defP} into \eqref{eq:op-state-withK}, which completes the proof. \qed
\end{proof}

\subsection{Generalization to Higher-order Models}
In the previous section, we discussed minimum energy filters to estimate \egomotion under the assumption of \emph{constant acceleration}.
We saw that changing the assumption of \emph{constant velocity} to \emph{constant acceleration} requires extending the Lie group and adopting the functions $f(G)$ and $C(G)$.

The generalization to higher polynomial models regarding camera motion, where we assume that the $m$-th order derivative of the \egomotion should be zero, i.e.\
\begin{equation}
\frac{d^{m}}{d t^{m}} E(t) =  0,
\end{equation}
is straightforward.

Again, the approach can be described by a system of first-order ODEs as follows. Note that in the constant acceleration model (second-order), only the first-order model 
needs to respect manifold structures, whereas all the other derivatives are trivial since they evolve on Euclidean spaces:
\begin{align}
\begin{split}\label{eq:state-general}
\dot{E}(t) = & E(t)\bigl(\matse{v}_{1}(t) + \delta_{1}(t) \bigr), \\
\dot{v}_{1}(t) = & v_{2}(t) + \delta_{2}(t), \\
\vdots &  \\
\dot{v}_{m-2}(t) = & v_{m-1} + \delta_{m-1}(t), \\
\dot{v}_{m-1}(t) = & \delta_{m}(t)
\end{split}
\end{align}
To achieve a unique solution we require initial values, i.e.\ $v_{1}(0) = v_{1}^{0}, \dots, v_{m-1}(0) = v_{m-1}^{0} \in \R^{6}.$
Again, the observation equations \eqref{eq:observation-eq} stay unchanged. The minimum energy filter for this model is provided by the following theorem. By using once again
\begin{equation}
G = (E,v_{1},\dots,v_{m-1}) \in \G_{m} := \SE \times \R^{6} \times \cdots \times \R^{6}\,,
\end{equation}
 the corresponding minimum energy filter can be obtained easily from Theorem~\ref{thm:minEnFilter_Acc}. 

\begin{theorem}[Minimum energy filter for $m-$th order state equation]\label{thm:MEF-higher-order}
The differential equations of the second-order Minimum Energy Filter for the state equation \eqref{eq:state-general} and the observation equations \eqref{eq:observation-eq} are given by the equations \eqref{eq:optimalStateG} and
{\small
\begin{align}
 \begin{split}
& \dot{P}(t) =   -\alpha \cdot P + S^{-1} + C P + P C^{\T}\\
& - P 
\begin{pmatrix}
\sum_{k=1}^{n} (\tilde{\Gamma}_{\vecse(\on{Pr}(A_{k}(E^{\ast})))} + D_{k}(E^{\ast})) & \mathbf{0}_{6\times (m-1)6} \\
\mathbf{0}_{(m-1)6\times 6} & \mathbf{0}_{(m-1)6\times (m-1)6}
\end{pmatrix} P \,, \\
& P(t_{0}) = \eins_{6m}\,,
\end{split}
\end{align}} 
where we assume that the expressions $G^{\ast}$ and $P$ lie in the spaces $\G_{m}$ and $\R^{6m\times 6m},$ respectively. The appearing expressions in Theorem~\ref{thm:minEnFilter_Acc} are replaced by
\begin{align*}
f(G) := &  (\matse(v_{1}),v_{2},\dots,v_{m-1},\mathbf{0_{6\times 1}}), \\
r_{t}(G^{\ast}) := & \bigl(\sum_{k=1}^{n} \on{Pr}(A_{k}(E^{\ast}), \mathbf{0}_{(m-1)6 \times 1}\bigr), \\
C(G^{\ast},t) := & \begin{pmatrix}
\bsm -\Psi(G^{\ast},t) \\ \mathbf{0}_{6(m-2)\times 6} \esm & \eins_{6(m-1)} \\
 \mathbf{0}_{6\times 6} & \mathbf{0}_{6\times 6(m-1)}
 \end{pmatrix} \,.
 \end{align*}
All the other expressions from Theorem~\ref{thm:minEnFilter_Acc} stay unchanged.
\end{theorem}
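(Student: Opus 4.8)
The plan is to reduce Theorem~\ref{thm:MEF-higher-order} to Theorem~\ref{thm:minEnFilter_Acc} by the same structural argument that produced the constant-acceleration case, exploiting the fact that the extra state components $v_2,\dots,v_{m-1}$ all live on Euclidean factors and hence contribute trivially to the Riemannian geometry. First I would observe that $\G_m = \SE \times (\R^6)^{m-1}$ is again a product Lie group whose only non-Euclidean factor is $\SE$; consequently Proposition~\ref{prop1} generalizes verbatim (the tangent map of left translation acts as $E\eta_1$ on the first slot and as the identity on every $\R^6$ slot), the Levi-Civita connection function $\omega$ is block-diagonal with a single nonzero block coming from $\se$, and the Christoffel symbols $\Gamma^i_{jk}$, the matrices $\tilde\Gamma_z$, $\tilde\Gamma_z^\ast$, and the second-order operator $D_k$ are unchanged because they depend only on the $\SE$-part of $G$ through $E^\ast$. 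Thus every geometric ingredient entering \eqref{eq:Zall} and Lemma~\ref{lemma2} carries over, with $6$ replaced by $6m$ in the ambient dimension and with the new blocks being zero.

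Next I would rewrite the higher-order kinematic system \eqref{eq:state-general} in the compact form \eqref{eq:const-acc-ass-dotG}, i.e. $\dot G = G\,f(G)$ with the stated $f(G) = (\matse(v_1),v_2,\dots,v_{m-1},\mathbf 0_{6})$, and re-derive the left-trivialized Hamiltonian \eqref{eq:left-trivial-hamilton}: the observation term $\tfrac12 e^{-\alpha(t-t_0)}\sum_k\lVert y_k - h_k(E)\rVert_Q^2$ is identical since the observation equations \eqref{eq:observation-eq} are untouched, the noise-penalty term acquires $m$ quadratic forms instead of two (equivalently one weighting matrix $S\in\R^{6m\times 6m}$), and the drift pairing $-\la\mu,f(G)\ra_{\Id}$ now reads $-\la\mu_1,\matse(v_1)\ra_{\Id} - \sum_{j=2}^{m-1}\la\mu_j,v_j\ra$. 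From this, $-\D_2\mc H^-(G^\ast,0,t) = f(G^\ast)$ exactly as in \eqref{eq:D2hamiltonian}, while the mixed derivatives $\D_1(\D_2\mc H^-)$ and $\D_2(\D_1\mc H^-)$ produce the shift operator $\begin{psmallmatrix}\mathbf 0 & \eins_{6(m-1)}\\ \mathbf 0 & \mathbf 0\end{psmallmatrix}$ and its transpose, generalizing items (4) and (5) of Lemma~\ref{lemma2}; and $\Hess_1\mc H^-$ still has its only nonzero block in the $\SE$-corner, giving the claimed sparsity pattern for $P$'s quadratic term. Assembling these into \eqref{eq:ode-optimal-state1} yields \eqref{eq:optimalStateG} with the new $f$ and $r_t$, and feeding the updated Lemma~\ref{lemma2} identities into the $\vecg$ of \eqref{eq:Zall}, then applying the change of variables $P = e^{-\alpha(t-t_0)}K^{-1}$ as in \eqref{eq:odeP}, gives the stated Riccati equation with $C(G^\ast,t)$ as displayed — the relation $C = \begin{psmallmatrix}\mathbf 0 & \eins_{6(m-1)}\\ \mathbf 0 & \mathbf 0\end{psmallmatrix} - B$ (the analogue of \eqref{eq:connection_C_B}) pinning down exactly where the $-\Psi$ block sits.

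The only genuinely delicate point is bookkeeping the block structure of $C$: because only $\dot v_1$ involves the manifold (through $\matse(v_1)$ and the connection correction $\tilde\Gamma^\ast_{\vecse(P\,r_t)}$ hidden in $\Psi$), the $-\Psi$ term must appear solely in the top-left $6\times 6$ block while the "velocity-chain" couplings $\dot v_j = v_{j+1}$ contribute the super-diagonal identity $\eins_{6(m-1)}$; one has to check that the $\ad_{\se}^{\on{vec}}(f(G^\ast))$ piece of $\Psi$ indeed only sees the $\matse(v_1)$ component of $f$ and not the Euclidean tail, which follows because $\ad$ on the abelian $\R^6$ factors vanishes. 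I expect this — reconciling the index ranges $6$, $6(m-1)$, $6(m-2)$, $6m$ in $C$ and verifying the corner placement of $-\Psi$ against the derivative computation — to be the main obstacle, but it is entirely mechanical once the $m=2$ case of Theorem~\ref{thm:minEnFilter_Acc} is in hand; everything else is a dimension substitution. I would close by remarking that the initialization $P(t_0) = \eins_{6m}$ and $G^\ast(t_0) = \Id$ are inherited directly from the quadratic initial penalty $m_0$ on $\G_m$, so no separate argument is needed there. \qed
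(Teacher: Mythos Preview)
Your proposal is correct and follows essentially the same route as the paper: both arguments reduce the $m$-th order case to Theorem~\ref{thm:minEnFilter_Acc} by observing that $\G_m$ is a product Lie group whose only curved factor is $\SE$, so the connection functions and the Hessian $\Hess_1\mc H^-$ contribute nontrivially only in the $\se$-block, while $r_t$ and the shift structure in $C$ fall out of the observation depending only on $E$ and the computation of $\D_1(\D_2\mc H^-)$. If anything, your bookkeeping of the block placement of $-\Psi$ and the vanishing of $\ad$ on the abelian factors is more explicit than the paper's own (rather terse) proof.
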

\begin{proof}
Since product Lie groups are simply Lie groups with the product topology, we can still apply the general minimum energy filter of Saccon {\em et al.}~\cite{saccon2013second}. The Lie group $\G_{m}$ has dimension $6m$ such that the vectorized bilinear operator~$Z$ from~\eqref{eq:defZ}, i.e.\ $P$ results in a $6m \times 6m$ matrix. The definition of the function~$f$ follows from the differential equations in~\eqref{eq:state-general}.
Similarly to Theorem~\ref{thm:minEnFilter_Acc}, the observations do not depend on the whole state $G=(E,v_{1},\dots,v_{m-1})$, but only on~$E$. This leads to the fact that~$r_{t}$, which is essentially the left-trivialized differential of the Hamiltonian (i.e. $G^{-1} \D_{1} \mc H^{-}(G,\mathbf{0},t)$), vanishes after calculating the differentials regarding $v_{1},\dots,v_{m-1}$. Similarly, the Hessian $G^{-1} \Hess_{1} \mc H^{-}(G,\mathbf{0},t)[G\eta]$ in Lemma~\ref{lemma2} can be extended  by zeros.
Furthermore, components $v_{1},\dots,v_{m-1} \in \R^{6}$ have a trivial geometry and do not contribute to curvature and thus the corresponding connection functions in Lemma~\ref{lemma2} also do not influence curvature.
Finally, we can compute the expression
\begin{align*}
\begin{split}
\D_{1}(\D_{2} \mc H^{-}(G,& \mathbf{0}, t))[G \eta] = -\D f(G)[\eta] \\
&  = -(\matse(v_{2}),v_{3},\dots,v_{m-1},\mathbf{0})
\end{split}
\end{align*}
and thus
\begin{align*}
\begin{split}
\vecg\bigl(\D_{1}(\D_{2}& \mc H^{-}(G,\mathbf{0}, t))[G \eta] \bigr) = \bsm \mathbf{0}_{6(m-1)\times 6} & \eins_{6(m-1)} \\ \mathbf{0}_{6\times 6} & \mathbf{0}_{6\times 6(m-1)}  \esm ,
\end{split}
\end{align*}
as we did in Lemma~\ref{lemma2} for the special case. Together with the adjoint operator in $\Psi(G,t)$, we obtain the expression~$C$. \qed
\end{proof}

\section{Comparison with Extended Kalman Filters}

As an alternative to the proposed approach, we also suggest considering extended Kalman filters. For this purpose, we will compare our approach to a state-of-the-art {\em discrete~/ continuous extended Kalman filter on Lie groups} \cite{bourmaud2015continuous} in Section~\ref{sec:exp}.
The Kalman filter approach is valid in a more generalized scenario compared to ours because the state space as well as the observation space are matrix Lie groups, whereas we only consider real-valued observations in $\R^{n}.$ On the other hand, one needs to know that the covariance matrices of the model and observation noise and the {\em a posteriori} distribution are assumed to be Gaussian, which is in general not true for non-linear observation dynamics. 

\begin{algorithm}[th!]
\caption{Extended Kalman Filter for Lie Groups}\label{Alg:ExtendedKalman}
\begin{algorithmic}[1]
\Require State $G(t_{l-1})$,  Covariance $P(t_{l-1})$,  Observations $y_{k}(t_{l}),k=1,\dots,n$
\Procedure{Propagation on $[t_{l-1},t_{l}]:$}{} \em{Integrate the following differential equations}
 \State $\dot{G}(t) = G(t)f(G(t)) $
 \State \begin{varwidth}[t]{\linewidth}
      $\dot{P}(t) = J(t) P(t) + P(t)(J(t))^{\T} + S$ \par
        \hskip\algorithmicindent $+ \tfrac{1}{4}\EE(\ad_{\g}(\epsilon(t))S \ad_{\g}(\epsilon(t))^{\T})$\par
        \hskip\algorithmicindent $+  \tfrac{1}{12}\EE \bigl(\ad_{\g} (\epsilon(t))^{2}\bigr)S + \tfrac{1}{12}S\EE \bigl(\ad_{\g} (\epsilon(t))^{2}\bigr)^{\T} $
      \end{varwidth}  \label{line:dyn-P}
 \State $G^{-}(t_{l}) = G(t_{l})$, $P^{-}(t_{l}) = P(t_{l})$
 \EndProcedure
\Procedure{Update:}{}
 \State $K_{l} = P^{-}(t_{l}) H_{l}^{\T} \bigl(H_{l} P^{-}(t_{l}) H_{l}^{\T} + Q_{l} \bigr)^{-1}$
 \State $m_{l \vert l}^{-} = K_{l} \sum_{k=1}^{n}\bigl(y_{k}(t_{l})-h_{k}(G^{-}(t_{l}))\bigr)$ \label{line:residual}
 \State $G(t_{l}) = G^{-}(t_{l})\Exp( \matg(m_{l\vert l}^{-}))$
 \State $P(t_{l}) = \Phi(m_{l\vert l}^{-})\bigl(\eins_{12} - K_{l}H_{l}\bigr)P^{-}(t_{l})\Phi(m_{l\vert l}^{-})^{\T}$ \label{line:Phi}
 \EndProcedure
\end{algorithmic}
\end{algorithm}

The extended Kalman Filter  from~\cite{bourmaud2015continuous} is summarized in Algorithm~\ref{Alg:ExtendedKalman} and has already been adapted to our problem for real-valued observations. In line~\ref{line:residual} the residual is expressed as direct difference which is a special case of  \cite{bourmaud2015continuous}.  The function $\Phi$ in line~\ref{line:Phi} on $\G$ is shown in Appendix~\ref{sec:app-ext-kalman}. 

In the next section, we will adapt the Algorithm~\ref{Alg:ExtendedKalman} to different scenarios: to a filtering problem with linear observations as well as to our nonlinear filtering problem with a projective camera (cf.~\eqref{eq:state-eq}, \eqref{eq:observation-eq}). 

\begin{remark}
Note that the extended Kalman filter from~\cite{bourmaud2015continuous} requires a differential equation (that is not only driven by noise) in order to propagate the state, i.e.\ $\dot{E}(t) = E(t)\bigl(f(E) + \delta(t)\bigr)$, where~$f$ is non-trivial. Otherwise the update step of the extended Kalman filter is not significant because update and correction steps in the extended Kalman filter are separated. This is the reason why we only compare it to the second-order model where~$f \not\equiv 0$.
\end{remark}

\subsection{Derivations for Linear Observations}
\label{sec:linear-observations}

In the scenario of linear observations the state equation stays unchanged, i.e.\ is identical to \eqref{eq:state-eq}. Similarly to~\cite{zamani2012} we use the following linear observation equations:
\begin{equation}\label{eq:linear-observer}
y_{k}(t) = E(t) a_{k} + \epsilon_{k}(t), \quad k \in [n],
\end{equation}
where $E(t) \in \SE$ is the first component of $G(t) \in \G$ and $a_{k} \in \R^{4}$ are vectors that model the linear transformation of the state $G.$  Again, $\epsilon_{k}(t) \in \R^{4}$ are the observation noise vectors. 

In this case, the Minimum Energy Filter can be derived much more easily than in the non-linear case. Thus, for the compactness of presentation, we will skip the proof of the following propositions.
\begin{proposition}\label{prop:MEF-linear}
The Minimum Energy filter for the constant acceleration model \eqref{eq:state-eq} and linear observation equations \eqref{eq:linear-observer} is given by the equations \eqref{eq:optimalStateG} and \eqref{eq:optimalStateP}
where the function $A_{k}$ for $G=(E,v)$ is replaced by
\begin{align}
A_{k}(G) = & E^{\T} Q (Ea_{k} - y_{k}) a_{k}^{\T}, 
\end{align}
and the components $(i,j), i,j=1,\dots,6$ of the matrix $D_{k}(G)\in \R^{6\times 6}$ are given by
\begin{equation}
(D_{k}(G))_{i,j} = \zeta_{i}^{k}(E)(E^{j}), \quad E^{j}:= \matse(e_{j}^{6})\,,
\end{equation}
with $\zeta^{k}(E)(\cdot): \se \rightarrow \R^{6}$ given by
\begin{align}
\begin{split}
 \matse(&\zeta^{k}(E)(\eta_{1})) \\
 & := \on{Pr}\bigl(\eta_{1}^{\T} Q(Ea_{k}-y_{k})a_{k}^{\T} + E^{\T}Q \eta_{1} a_{k}a_{k}^{\T}\bigr)\,.
 \end{split}
\end{align}
Here,  $Q \in \R^{4\times 4}$ is a symmetric and positive definite matrix (cf.~\eqref{eq:penalty-function}). All other expressions from Theorem~\ref{thm:minEnFilter_Acc} stay unchanged.
\end{proposition}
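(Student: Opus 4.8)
The plan is to mirror the structure of the proof of Theorem~\ref{thm:minEnFilter_Acc}, specializing each ingredient to the linear observation model \eqref{eq:linear-observer}. The only place the observation enters the filter equations \eqref{eq:optimalStateG}--\eqref{eq:optimalStateP} is through the left-trivialized Hamiltonian \eqref{eq:left-trivial-hamilton}, whose observation-dependent term is $\tfrac12 e^{-\alpha(t-t_0)}\sum_k \lVert y_k - h_k(E)\rVert_Q^2$. Replacing $h_k(E)$ by the linear map $E a_k$ changes this to $\tfrac12 e^{-\alpha(t-t_0)}\sum_k \lVert y_k - E a_k\rVert_Q^2$, and everything downstream is just a recomputation of the first and second Riemannian derivatives of this term on $\SE$. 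So the first step is to redo Lemma~\ref{lemma1}: compute $\D_1 \mc H^-(G,\mu,t)$ for the linear observer and read off the new $A_k$.

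Concretely, I would compute the directional derivative $\D_E \bigl(\tfrac12\lVert y_k - Ea_k\rVert_Q^2\bigr)[E\eta_1]$ for $\eta_1 \in \se$. Since $\D_E(Ea_k)[E\eta_1] = E\eta_1 a_k$, the chain rule gives $-\la Q(y_k - Ea_k), E\eta_1 a_k\ra = -\la E^\T Q(Ea_k - y_k) a_k^\T, \eta_1\ra_{\eins_4}$ after cycling the trace, so the ambient gradient contribution is the matrix $E^\T Q(Ea_k - y_k) a_k^\T$; projecting onto $\se$ via $\on{Pr}$ reproduces the role played by $\on{Pr}(A_k(E))$ in Lemma~\ref{lemma1}, hence $A_k(G) = E^\T Q(Ea_k - y_k)a_k^\T$. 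With this the optimal-state ODE \eqref{eq:optimalStateG} follows verbatim from \eqref{eq:ode-optimal-state1} and \eqref{eq:D2hamiltonian}, since $f$ and the $\R^6$-part are untouched.

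The second step is the Hessian term feeding \eqref{eq:optimalStateP}, i.e.\ the analogue of item~(3) of Lemma~\ref{lemma2} and the $D_k$ appearing there. Here one needs the full Riemannian Hessian on $\SE$ of $E\mapsto \tfrac12\lVert y_k - Ea_k\rVert_Q^2$, which by the definition of $\Hess$ in Section~\ref{sec:notation} splits into a ``naive'' second derivative of the trivialized gradient plus a correction $-\D f(G)[\nabla_\eta \xi]$ involving the connection; the first piece produces the $D_k$ matrix and the second reproduces the $\tilde\Gamma_{\vecse(\on{Pr}(A_k))}$ term exactly as in Theorem~\ref{thm:minEnFilter_Acc}. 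Differentiating the trivialized gradient $\eta_1 \mapsto \on{Pr}\bigl(E^\T Q(Ea_k - y_k)a_k^\T\bigr)$ once more in a direction $\eta_1$ and using $\D_E(E^\T) = (E\eta_1)^\T = \eta_1^\T E^\T$ gives two terms, $\on{Pr}\bigl(\eta_1^\T Q(Ea_k - y_k)a_k^\T + E^\T Q E\eta_1 a_k a_k^\T\bigr)$; identifying this with the bilinear form $\zeta^k(E)(\eta_1)$ and evaluating on the basis $E^j = \matse(e_j^6)$ yields the stated $(D_k)_{ij} = \zeta_i^k(E)(E^j)$. One should double-check the constant-$\frac12$ and the $Q$-symmetrization carefully, and confirm that the $E^\T Q E\eta_1 a_k a_k^\T$ term, which in the nonlinear case had no analogue because $h_k$ was a projection, is correctly captured.

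Finally, I would observe that nothing else in the derivation changes: $\D_2\mc H^-$, $\Hess_2\mc H^-$, the connection/swap operators, and the change of variables $P = e^{-\alpha(t-t_0)}K^{-1}$ are all observation-independent, so \eqref{eq:optimalStateP} carries over with only $A_k$ and $D_k$ replaced as stated, and $r_t(G^\ast)$, $C$, $\Psi$ unchanged. The main obstacle I anticipate is purely bookkeeping in the Hessian step: keeping the $\on{Pr}$ projection, the trace-cycling, and the symmetry of $Q$ consistent so that the cross term lands exactly in the claimed form for $\zeta^k$, and making sure the connection correction is not double-counted between the $\Hess_1\mc H^-$ block and the curvature terms already present in $C$. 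Since the proposition says the proof is skipped, a clean way to present this is to state the two computations (new $\D_1\mc H^-$, new $\Hess_1$ block) and note that the remainder is identical to the proof of Theorem~\ref{thm:minEnFilter_Acc}.
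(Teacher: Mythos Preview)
Your plan is exactly what the paper intends: it explicitly says the proof is skipped for brevity, and the route is to specialize the proof of Theorem~\ref{thm:minEnFilter_Acc} (via Lemma~\ref{lemma1} and item~(3) of Lemma~\ref{lemma2}) to the observation $h_k(E)=Ea_k$. Your identification of $A_k(G)=E^{\T}Q(Ea_k-y_k)a_k^{\T}$ is right, and your observation that items (1),(2),(4),(5),(6) of Lemma~\ref{lemma2} are observation-independent and hence unchanged is also correct.

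The one place to tighten is precisely the bookkeeping issue you flag: in the Hessian step your $\zeta^k$ does not match the proposition, and the reason is a convention mismatch. In the paper's nonlinear derivation \eqref{eq:def-zeta}--\eqref{eq:def_Dk}, $\D A_k(E)[\eta_1]$ is computed with $\eta_1$ as the \emph{ambient} direction in $\R^{4\times 4}$ (so $\D E[\eta_1]=\eta_1$, $\D E^{\T}[\eta_1]=\eta_1^{\T}$, $\D E^{-1}[\eta_1]=-E^{-1}\eta_1 E^{-1}$), and only afterwards is $\eta_1$ specialized to $E^{j}\in\se$. With that convention, differentiating $A_k(E)=E^{\T}Q(Ea_k-y_k)a_k^{\T}$ gives
\[
\D A_k(E)[\eta_1]=\eta_1^{\T}Q(Ea_k-y_k)a_k^{\T}+E^{\T}Q\,\eta_1\,a_k a_k^{\T},
\]
which is exactly the proposition's $\zeta^k$ after applying $\on{Pr}$. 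Your version instead differentiates in the tangent direction $E\eta_1$, which produces an extra $E$ (second term) and would also introduce an $E^{\T}$ in the first term that you dropped; either way the expression is internally inconsistent with the paper's formula for $D_k$ via \eqref{eq:comp-D-levi-civita}. Once you adopt the paper's ambient-direction convention, the computation is a two-line application of the product rule and no $\tfrac12$- or $Q$-symmetrization subtleties arise.
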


Since the linear observation model is a special case of the approach in~\cite{bourmaud2015continuous} we only need to modify the corresponding expressions in Algorithm~\ref{Alg:ExtendedKalman} which we summarize in the following proposition.

\begin{proposition}\label{prop:EKF-linear}
The Extended Kalman Filter for the constant acceleration model~\eqref{eq:state-eq} and linear observation equations~\eqref{eq:linear-observer} is given by Algorithm~\ref{Alg:ExtendedKalman} where the matrix $H_{l}:= \sum_{k=1}^{n} H_{l}^{k}$ is given by
\begin{align}
H_{l}^{k} = \bpm  \vecse(\Pr(E(t_{l})^{\T} e_{1}^{4}a_{k}^{\T}))^{\T} & \mathbf{0}_{1\times 6} \\
\vecse(\Pr(E(t_{l})^{\T} e_{2}^{4}a_{k}^{\T}))^{\T} & \mathbf{0}_{1\times 6} \\
\vecse(\Pr(E(t_{l})^{\T} e_{3}^{4}a_{k}^{\T}))^{\T} & \mathbf{0}_{1\times 6} \\
\vecse(\Pr(E(t_{l})^{\T} e_{4}^{4}a_{k}^{\T}))^{\T} & \mathbf{0}_{1\times 6} \\ \epm \in \R^{4\times 12} \label{eq:hk-linear-case}
\end{align}
and the function $J(t)$~(\cite[Eq. (52)]{bourmaud2015continuous})
is provided 
by~\eqref{eq:def-J-ext-Kalman} in Appendix~\ref{sec:app-ext-kalman}.
\end{proposition}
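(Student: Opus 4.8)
\textbf{Proof proposal for Proposition~\ref{prop:EKF-linear}.}
The plan is to specialize the general discrete/continuous extended Kalman filter on Lie groups of~\cite{bourmaud2015continuous} (which is reproduced in Algorithm~\ref{Alg:ExtendedKalman}) to the concrete state equation~\eqref{eq:state-eq} and linear observation model~\eqref{eq:linear-observer}, and to verify that the only data-dependent ingredients that change are the measurement Jacobian $H_{l}$ and the drift Jacobian $J(t)$. First I would note that the state dynamics are unchanged from the nonlinear case, so the propagation step --- lines 2--4 of Algorithm~\ref{Alg:ExtendedKalman} --- is already in the correct form provided $J(t)$ is taken to be the left-trivialized differential of $f$ on $\G=\SE\times\R^{6}$; this is exactly the object computed in~\cite[Eq.~(52)]{bourmaud2015continuous}, so I would simply record the resulting block expression as~\eqref{eq:def-J-ext-Kalman} in Appendix~\ref{sec:app-ext-kalman} and move on. The genuine content is therefore the update step, and in particular the identification of $H_{l}$.

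The key computation is the following. For the stacked residual in line~\ref{line:residual} we need the differential of the observation map $G\mapsto Ea_{k}\in\R^{4}$, expressed in the left-trivialized chart: for $\eta=(\eta_{1},\eta_{2})\in\g=\se\times\R^{6}$ we have $\D_{G}(Ea_{k})[G\eta] = E\eta_{1}a_{k}$, so the observation is insensitive to the velocity component $\eta_{2}$, which produces the $\mathbf{0}_{1\times6}$ blocks in~\eqref{eq:hk-linear-case}. It then remains to rewrite, for each row $i=1,\dots,4$, the scalar $e_{i}^{4\,\T}E\eta_{1}a_{k}$ as a linear functional of $\vecse(\eta_{1})$. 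Using $\tr(\chi^{\T}\eta)=\la\vecse(\chi),\vecse(\eta)\ra$ together with the fact that $\on{Pr}$ is the orthogonal projection onto $\se$ (so that $\la\on{Pr}(M),\eta_{1}\ra_{\Id}=\la M,\eta_{1}\ra_{\Id}$ for $\eta_{1}\in\se$), one gets
\begin{equation*}
e_{i}^{4\,\T}E\eta_{1}a_{k}
= \tr\bigl((E^{\T}e_{i}^{4}a_{k}^{\T})^{\T}\eta_{1}\bigr)
= \la \vecse(\on{Pr}(E^{\T}e_{i}^{4}a_{k}^{\T})),\,\vecse(\eta_{1})\ra,
\end{equation*}
which is precisely the $i$-th row of $H_{l}^{k}$ in~\eqref{eq:hk-linear-case} after evaluating at $E=E(t_{l})$. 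Summing over $k$ gives $H_{l}=\sum_{k}H_{l}^{k}$, consistent with the way the residuals are aggregated in line~\ref{line:residual}. The remaining quantities in the update step --- the gain $K_{l}$, the correction $\Exp(\matg(m_{l|l}^{-}))$, and the covariance update with the factor $\Phi$ from Appendix~\ref{sec:app-ext-kalman} --- are taken verbatim from~\cite{bourmaud2015continuous} and need no modification, so I would simply state this.

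I expect the main obstacle to be purely bookkeeping rather than conceptual: one must be careful that the scaling conventions hidden in $\matse$/$\vecse$ (the $1/\sqrt2$ factors in the definition of $\matse$) are consistent with the claim that $\vecse$ is an isometry, so that the projection-and-vectorize formula above reproduces the Jacobian in the chart actually used by Algorithm~\ref{Alg:ExtendedKalman}; and one must check that the noise covariance $Q_{l}$ appearing in the gain matches the $\R^{4}$-valued measurement noise $\epsilon_{k}$ in~\eqref{eq:linear-observer} with the weighting $Q$ from~\eqref{eq:penalty-function}. Since none of this is deep, and since the linear observation model is literally a special case of~\cite{bourmaud2015continuous}, the proof reduces to exhibiting $H_{l}$ and citing $J(t)$; this is why the statement is phrased as ``we only need to modify the corresponding expressions in Algorithm~\ref{Alg:ExtendedKalman}'', and why we omit the routine verification.
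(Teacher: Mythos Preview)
Your proposal is correct and mirrors exactly what the paper does. The paper explicitly skips the proof of this proposition (``for the compactness of presentation, we will skip the proof of the following propositions''), but your derivation of $H_{l}^{k}$ via the directional derivative $e_{i}^{4\,\T}E\eta_{1}a_{k}=\langle E^{\T}e_{i}^{4}a_{k}^{\T},\eta_{1}\rangle_{\Id}$ followed by projection onto $\se$ and vectorization is precisely the template the paper uses in Appendix~\ref{sec:app-ext-kalman} for the nonlinear observation case, and your handling of $J(t)$ and the remaining algorithm ingredients is consistent with the paper's treatment.
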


\begin{remark}
Note that \eqref{eq:hk-linear-case} is different from~\cite[Eq.~(111)]{bourmaud2015continuous} because of the additive instead of multiplicative noise term, and consequently is not consistent with the group structure of $\SE$.
\end{remark}

\subsection{Derivations for Nonlinear Observations}
The adaption of the extended Kalman Filter~\cite{bourmaud2015continuous} to our state~\eqref{eq:state-eq} and observation~\eqref{eq:observation-eq} equation is provided by the following proposition:
\begin{proposition}
The extended Kalman filter from \cite{bourmaud2015continuous} for our state \eqref{eq:state-eq} and observation \eqref{eq:observation-eq} equation is given by Algorithm~\ref{Alg:ExtendedKalman} where the expressions $J(t)$ and $H_{l}$ are provided in the equations~\eqref{eq:def-J-ext-Kalman} and~\eqref{eq:Hl-nonlinear}, respectively, see Appendix~\ref{sec:app-ext-kalman}.
\end{proposition}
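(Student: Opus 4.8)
The plan is to obtain Algorithm~\ref{Alg:ExtendedKalman} for our problem by specializing the generic continuous/discrete extended Kalman filter on Lie groups of Bourmaud \emph{et al.}~\cite{bourmaud2015continuous}; concretely, only two linearizations have to be computed, namely the left-trivialized Jacobian $J(t)$ of the drift and the observation matrix $H_{l}$, while everything else in the algorithm is the generic filter with the single specialization (already noted in the remark following Algorithm~\ref{Alg:ExtendedKalman}) that the innovation lives in $\R^{2}$ rather than in a matrix Lie group. The starting point is the concentrated-Gaussian parametrization $G(t) = \hat{G}(t)\,\Exp_{\G}(\matg(\epsilon(t)))$ on $\G = \SE\times\R^{6}$ with $\epsilon(t)\sim\mathcal{N}(\mathbf{0}_{12},P(t))$. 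Substituting this into the state equation~\eqref{eq:state-eq} and matching the zeroth- and first-order terms in $\epsilon$ reproduces, exactly as in~\cite{bourmaud2015continuous}, the mean-propagation equation $\dot{\hat{G}} = \hat{G}f(\hat{G})$ together with the Riccati-type covariance equation in line~\ref{line:dyn-P}, in which $J(t)$ is the matrix of the left-trivialized differential of the drift at $\hat{G}$, combining $\D_{1}f$ with the adjoint term $\ad_{\g}$ accounting for the non-abelian structure of $\G$.

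First I would compute $J(t)$. Since $f$ is given by~\eqref{eq:const-acc-ass-def-f}, i.e.\ $f((E,v)) = (\matse(v),\mathbf{0}_{6})$, the differential in the $\SE$-direction contributes only the adjoint term $-\ad_{\se}^{\on{vec}}(\matse(v))$ in the $(1,1)$-block, the differential in the $v$-direction contributes $\eins_{6}$ in the $(1,2)$-block, and all remaining blocks vanish because $\dot{v}=0$ and $\R^{6}$ is abelian; this yields the explicit form~\eqref{eq:def-J-ext-Kalman} in Appendix~\ref{sec:app-ext-kalman}. Note that this is precisely the block structure of the matrix $C$ in Theorem~\ref{thm:minEnFilter_Acc}, but \emph{without} the curvature correction $\tilde{\Gamma}^{\ast}$, because the Kalman filter propagates the covariance along the group using the canonical left connection rather than the Levi-Civita connection used by the minimum energy filter; the remaining terms in line~\ref{line:dyn-P} are the noise-covariance corrections of~\cite{bourmaud2015continuous} that are independent of $f$ and $h_{k}$ and therefore carry over verbatim.

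Next I would linearize the observation map $h_{k}$ from~\eqref{eq:def-hk} around $\hat{E} = E^{-}(t_{l})$: writing $E = \hat{E}\,\Exp_{\SE}(\matse(\eta_{1}))$ and expanding $h_{k}(E) = \pi\bigl((E^{-1}g_{k})_{1:3}\bigr)$ to first order in $\eta_{1}$ — via the chain rule through $\pi$ and through $E\mapsto E^{-1}$ — produces a linear map $\R^{6}\to\R^{2}$ whose matrix is the per-feature block $H_{l}^{k}\in\R^{2\times 12}$, the last six columns being $\mathbf{0}_{2\times 6}$ because the observations do not depend on $v$. Summing over the features gives $H_{l} = \sum_{k=1}^{n}H_{l}^{k}$, consistent with the direct-difference residual in line~\ref{line:residual} and with the linear case of Proposition~\ref{prop:EKF-linear}; the explicit formula is~\eqref{eq:Hl-nonlinear}. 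The update step — Kalman gain, the retraction $G^{-}(t_{l})\Exp_{\G}(\matg(m_{l\vert l}^{-}))$, and the covariance update with the factor $\Phi$ in line~\ref{line:Phi} — is then the unmodified Bayesian-fusion update of~\cite{bourmaud2015continuous}.

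I expect the observation linearization to be the main obstacle. It is the same type of computation as in the proof of Lemma~\ref{lemma1}, requiring the chain rule through $\pi$, through matrix inversion, and through the projection $\on{Pr}$ onto $\se$, and some care is needed to align the left-trivialization conventions of~\cite{bourmaud2015continuous} with ours; fortunately, the intermediate expressions already worked out for $A_{k}(E)$ in~\eqref{eq:def-Ak} can be reused almost directly. A secondary subtlety, inherited from the linear case, is that our \emph{additive} observation noise breaks the group structure on the observation side, so $H_{l}$ is not the group-consistent observation Jacobian of~\cite{bourmaud2015continuous}; this has to be flagged but does not affect the derivation.
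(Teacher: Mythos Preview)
Your approach is essentially the paper's: specialize Bourmaud's continuous/discrete EKF by computing the two linearizations $J(t)$ and $H_{l}$, while the rest of Algorithm~\ref{Alg:ExtendedKalman} carries over. The computation of $H_{l}$ is right in spirit and mirrors the paper's Appendix~\ref{sec:app-ext-kalman}, which indeed reuses the directional-derivative calculus from the proof of Lemma~\ref{lemma1} to obtain, for each component $j\in\{1,2\}$, a vector $\rho_{k}^{j}$ that is then projected onto $\se$ and stacked into~\eqref{eq:Hl-nonlinear}.

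There is one genuine gap in your description of $J(t)$. You characterize $J(t)$ as ``the matrix of the left-trivialized differential of the drift $\dots$ combining $\D_{1}f$ with the adjoint term $\ad_{\g}$'' and claim this yields~\eqref{eq:def-J-ext-Kalman}. But the paper's~\eqref{eq:def-J-ext-Kalman} reads
\[
J(t) \;=\; F(t)\;-\;\ad_{\g}\bigl(f(G(t))\bigr)\;+\;\tfrac{1}{12}\,C(S),
\]
with an additional noise-dependent term $\tfrac{1}{12}C(S)$ that is \emph{part of} $J(t)$, not one of the ``remaining terms in line~\ref{line:dyn-P}'' that you say carry over verbatim. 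This term arises in Bourmaud's derivation from the higher-order BCH corrections when propagating the concentrated Gaussian through the group exponential, and it depends on the model-noise covariance $S$ (the paper gives the explicit block form for diagonal $S$). Your $F(t)-\ad_{\g}$ description would therefore not reproduce~\eqref{eq:def-J-ext-Kalman}; you must go back to Eq.~(52) of~\cite{bourmaud2015continuous} and carry the $\tfrac{1}{12}$ covariance correction through the specialization to $\G=\SE\times\R^{6}$, where the abelian $\R^{6}$-factor makes the lower-right $6\times6$ block of $C(S)$ vanish but the $\SE$-block does not.
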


\section{Numerical Geometric Integration}\label{sec:numInt}

The numerical integration of the optimal state differential equation \eqref{eq:optimalStateG} requires respecting the geometry of the Lie group.
We use the implicit Lie midpoint rule for integration of the differential equation of the optimal state $G^{\ast}$  \eqref{eq:optimalStateG} as proposed in \cite{hairer2006}. We need to modify the method since we defined state space $\G$ as left invariant Lie group. Instead, in~\cite{hairer2006}, only right-invariant Lie groups are investigated. The adaption to left-invariant Lie groups is straightforward and leads to the following integration schemes: for a discretization $t_{0}<t_{1}<\cdots<t_{n}$ with equidistant step size $\delta = t_{k}-t_{k-1}$ for all $k$, we integrate the differential equation of the optimal state~\eqref{eq:optimalStateG} using the scheme
\begin{align}
G(t_{k+1}) & = G(t_{k}) \Exp( \Xi )\,,\\
\begin{split}\label{eq:fixedPointOmega}
\mbox{with } \Xi  & = \delta \bigl(f(G(t_{k})\Exp(\Xi/2)))\\
& \quad - \matg(P(t_{k})\vecg(r_{t}(G(t_{k})\Exp(\Xi/2)))\bigr)\,.
\end{split}
\end{align}
For each $k$ the matrix $\Xi$ is received by a fixed point iteration of \eqref{eq:fixedPointOmega}. For the integration of equation \eqref{eq:optimalStateP}, we need to consider that this is a special kind of the {\em matrix Riccati differential equation} for which methods exist that ensure that the solution is positive definite. As shown in \cite{dieci1994positive}, a numerical integration method will preserve positive definiteness if and only if the order of the method is one. 
By taking down \eqref{eq:optimalStateP} as general Riccati differential equation
\begin{equation}
\dot{P}(t) = A(t) P(t) + P(t) A(t)^{\T} - P(t)B(t)P(t) + C(t)\,,
\end{equation}
with symmetric matrices $B(t)$ and $C(t),$ the implicit Euler integration method is given by
\begin{align}
\begin{split}
P(t_{k+1}) = & P(t_{k}) + \delta\bigl( AP(t_{k+1})  + P(t_{k+1}) A^{\T}  \\
& - P(t_{k+1})BP(t_{k+1}) + C \bigr),
\end{split}
\end{align}
which can be expressed by the \emph{algebraic Riccati equation} for which an unique solution exists \cite{lancaster1980existence} that can be found by standard solvers, e.g.\ CARE.

\section{Experiments}\label{sec:exp}
\begin{figure}[tb]
\includegraphics[scale=0.18]{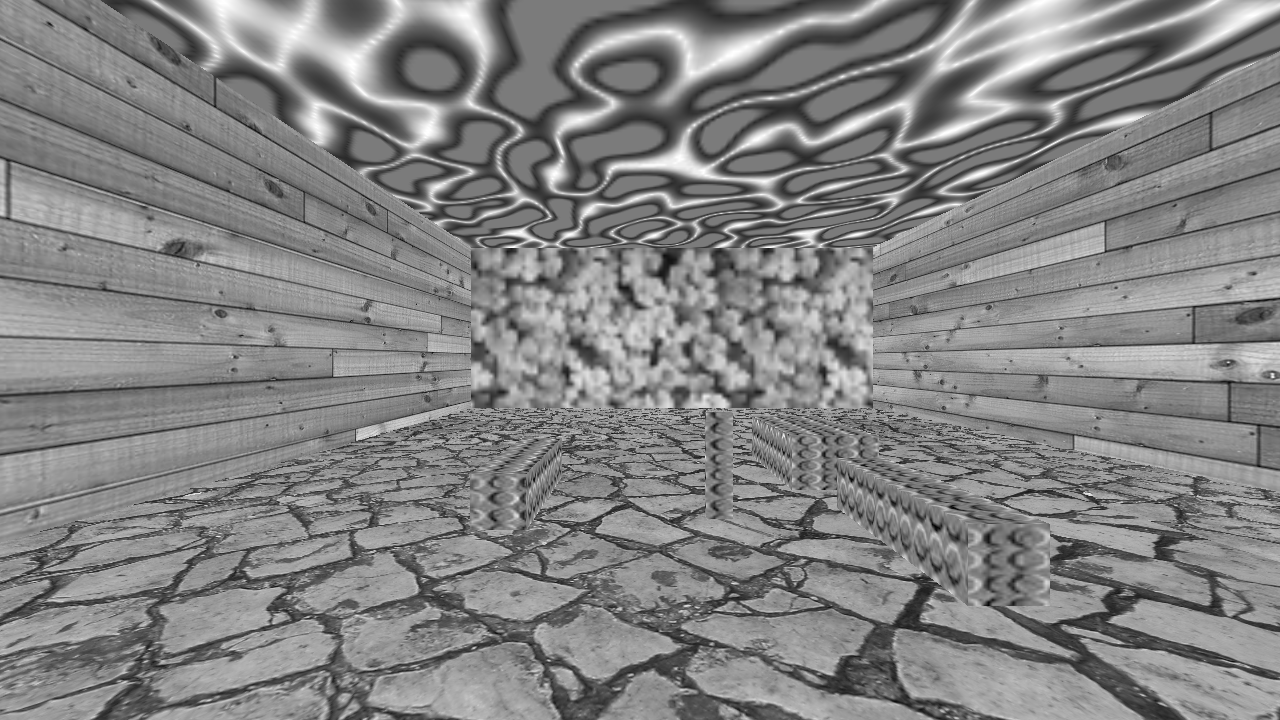}\\
\includegraphics[scale=0.116]{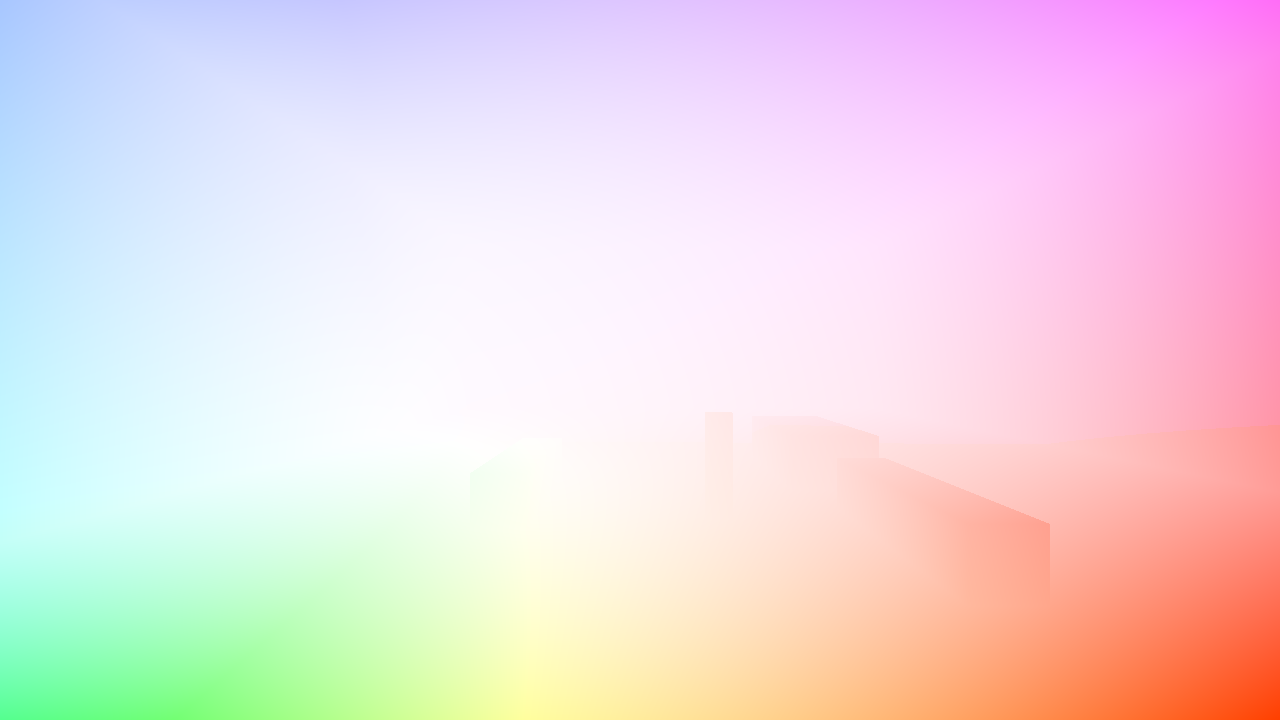}\hspace{-1mm} \includegraphics[scale=0.552]{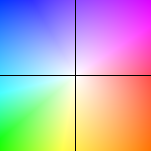}
 \\
\includegraphics[scale=0.116]{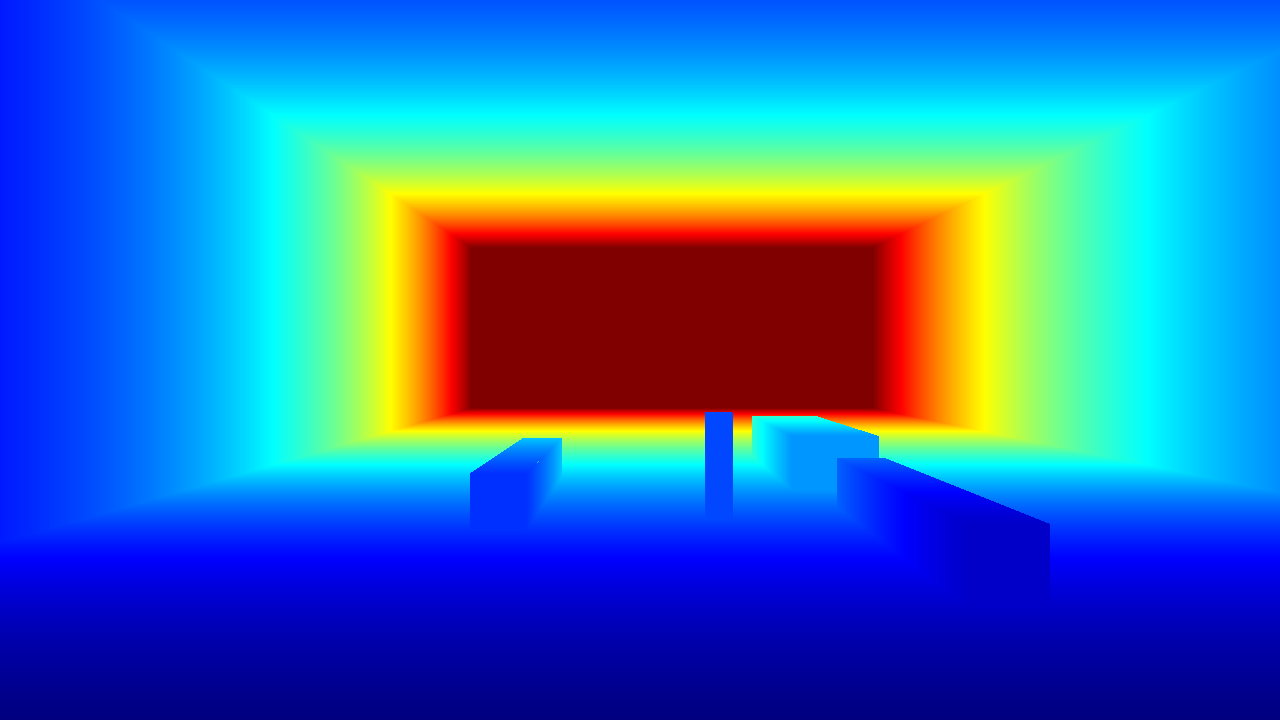}\includegraphics[scale=0.37]{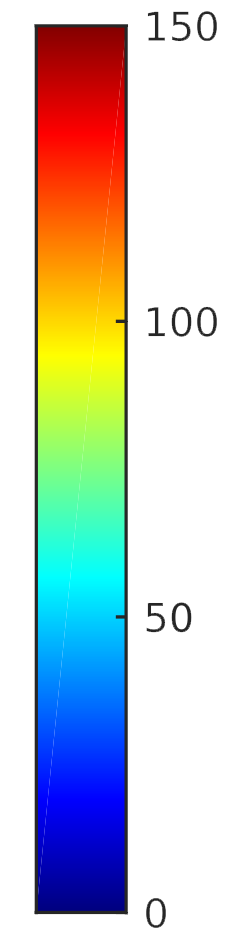}
\caption{Synthetic sequence (top) generated by a simple ray tracer. To provide realistic camera tracks we used ground truth trajectories from the KITTI odometry benchmark and computed the corresponding induced optical flow (mid) and the depth map (bottom). The corresponding color encodings for direction of optical flow and depth map are on the right hand side.}
\label{fig:synth}
\end{figure}

In this experimental section, we will evaluate the accuracy of the proposed minimum energy filter for ego-motion estimation. First we will provide experiments on synthetic data to exclude external influences and to show robustness against measurement noise. Then we will consider real world experiments on the challenging KITTI benchmark and compare our method with a state-of-the-art method~\cite{geiger2011stereoscan}. Finally, to evaluate the theoretical performance of the filter, we will also compare to the state-of-the-art extended Kalman filter~\cite{bourmaud2015continuous} in a controlled environment.

\subsection{Synthetic Data}
Before considering real-life sequences, we first evaluate synthetic scenes to have full control on the regularity on the camera track. We generate 3D scenes by raytracing simple geometric objects (cf.~Fig.~\ref{fig:synth}), which also enables us to acquire correctly induced optical flow and depth maps. In order to gain a realistic camera behavior, we use the tracks from the KITTI visual odometry training benchmark which were determined by an inertial navigation system in a real moving car. We start with considering the case of perfect measurements (Section~\ref{sec:synth-perfect-measurements})
and demonstrating robustness against different kinds of noise in Section ~\ref{sec:synth-noisy-measurements}.

\subsubsection{Evaluation on Noiseless Measurements}\label{sec:synth-perfect-measurements}
First, we evaluate the proposed filter on the true optical flow.
To avoid overfitting, we set a relatively small weight onto the weighting matrix for the data term, i.e.~$Q = 0.1/n$, where~$n$ is the number of observations. We set the weighting matrix $S$ to the block diagonal matrix containing the matrices $S_{i}$, i.e.
\begin{equation}\label{eq:def-blockdiag}
S = \operatorname{blockdiag}(S_{1},\dots,S_{m})\,,
\end{equation}
where~$m$ denotes the order of the kinematic model and the $S_{1} = \diag(s_{1},s_{1},s_{1},s_{2},s_{2},s_{2})$ with $s_{1}=10^{-2}$ and $s_{2}=10^{-5}$. The decay rate is set to $\alpha=2$ and the integration step size to $\delta=1/50$.

As demonstrated in Fig.~\ref{fig:synth_without_noise}, the proposed filters of different order show a similar rotational error since the ground truth rotation is often constant and influenced
by (physical) noise. That is possibly caused by the low temporal resolution of 10 Hz, not being able to give sufficient information on the kinematics. On the contrary, in the translational part we can see that the higher-order models work significantly better than our first-order model~\cite{berger2015second}, but that third- and fourth-order methods perform fairly the same. 
From this we can conclude that kinematics of fifth- or even higher-order will not improve performance regarding this kind of camera tracks.

\begin{figure*}[htbp]
\begin{center}
% This file was created by matlab2tikz.
% Minimal pgfplots version: 1.3
%
%The latest updates can be retrieved from
%  http://www.mathworks.com/matlabcentral/fileexchange/22022-matlab2tikz
%where you can also make suggestions and rate matlab2tikz.
%
\definecolor{mycolor1}{rgb}{0.,0.7,0.3}%
\definecolor{mycolor2}{rgb}{1.00000,0.00000,1.00000}%
% [inline block 0: 2 envs, 78511 chars -> data_tex | \begin{tikzpicture} ...]

\caption{Comparison of the rotational error in degree (top) and the translational error in meters (bottom) of the proposed minimum energy filters with kinematic state equations of orders one~(see \cite{berger2015second}) and two, three and four (this work). The dotted lines show the error averaged over all frames. We used a real camera track from sequence~0 of the KITTI visual odometry benchmark and generated synthetic sequences with induced depth maps and optical flow. The rotational errors are similar through all frames although the higher-order methods converge faster in the first iterations. In frames 20--90, the motion of the camera is almost constant and the filters perform similarly. However, the translational error of the first order method significantly changes in frames 90--150 and 175--200 because the constant velocity assumption is violated by curves in the trajectory.}
\label{fig:synth_without_noise}
\end{center}
\end{figure*}

\subsubsection{Evaluation on Noisy Measurements}\label{sec:synth-noisy-measurements}
To evaluate the robustness against noise, we altered the true optical flow measurements by 
multiplicative and additive noise, each being distributed uniformly or Gaussian, see Fig.~\ref{fig:synth_noise}.
The proposed method determines camera motion using the same parameters as in Section ~\ref{sec:synth-perfect-measurements}.
Comparison to the ground truth is achieved using the geodesic distance on~$\SE$ in order to avoid two separate error measures for translation and rotation, i.e. 
\begin{equation}\label{eq:geo-error}
d_{\SE}(E_{1},E_{2}) := \lVert \vecse(\Log(E_{1}^{-1}E_{2})) \rVert_{2} \,.
\end{equation}

The results in Tab.~\ref{tab:noise-quantitative} show that higher-order models outperform the first-order model with the exception of very high noise levels where the data does not contain sufficient information to correctly estimate a higher-order kinematic.

\begin{figure}[tb]
\begin{center}
\includegraphics[scale=0.09]{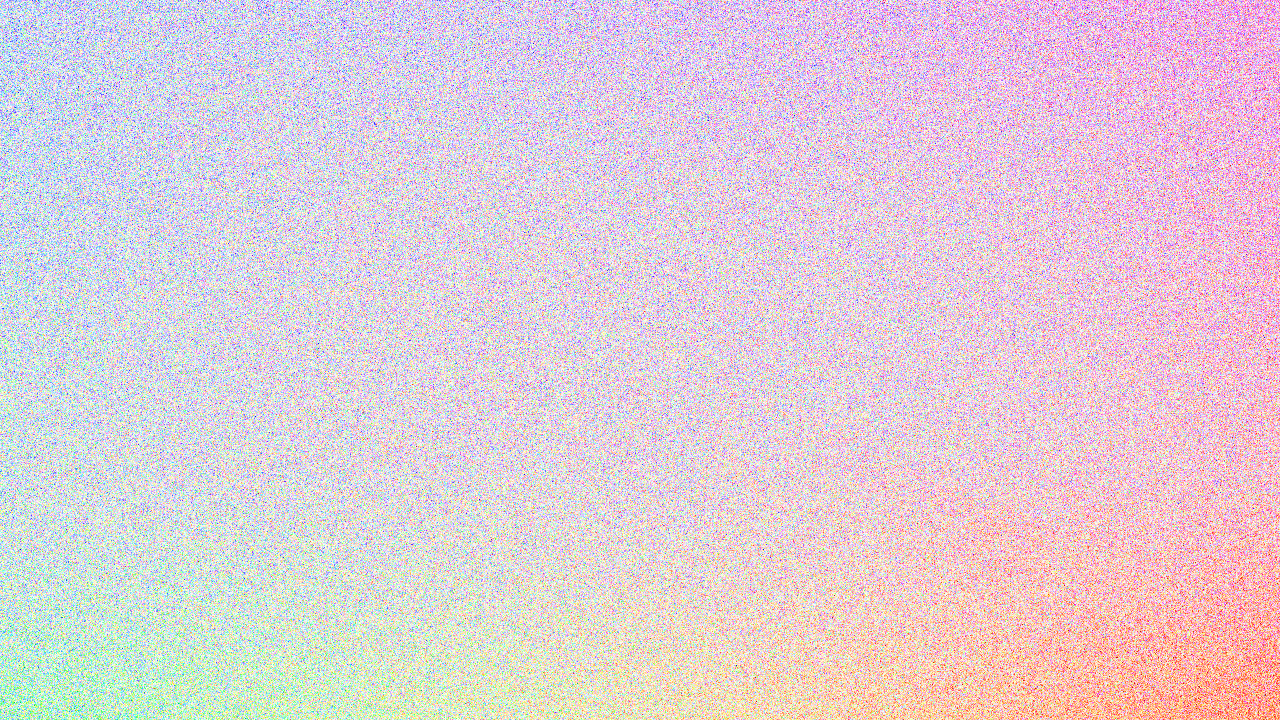}\includegraphics[scale=0.09]{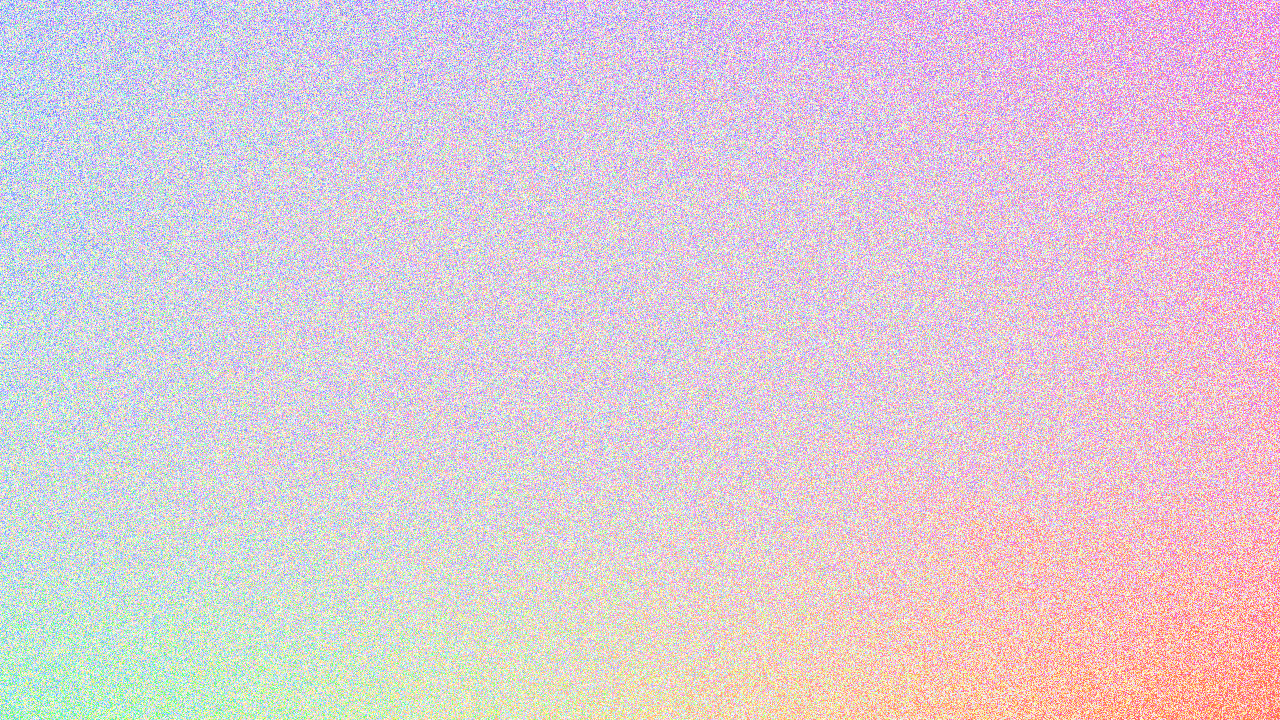}
\includegraphics[scale=0.09]{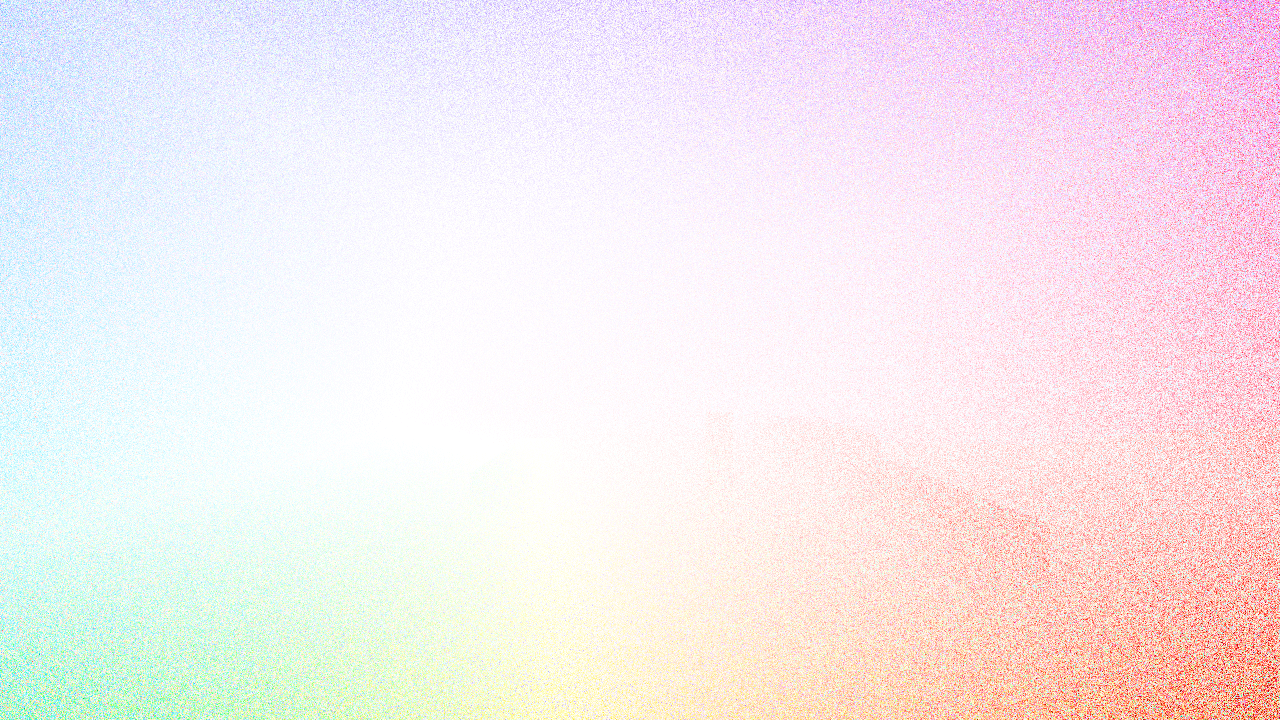}\includegraphics[scale=0.09]{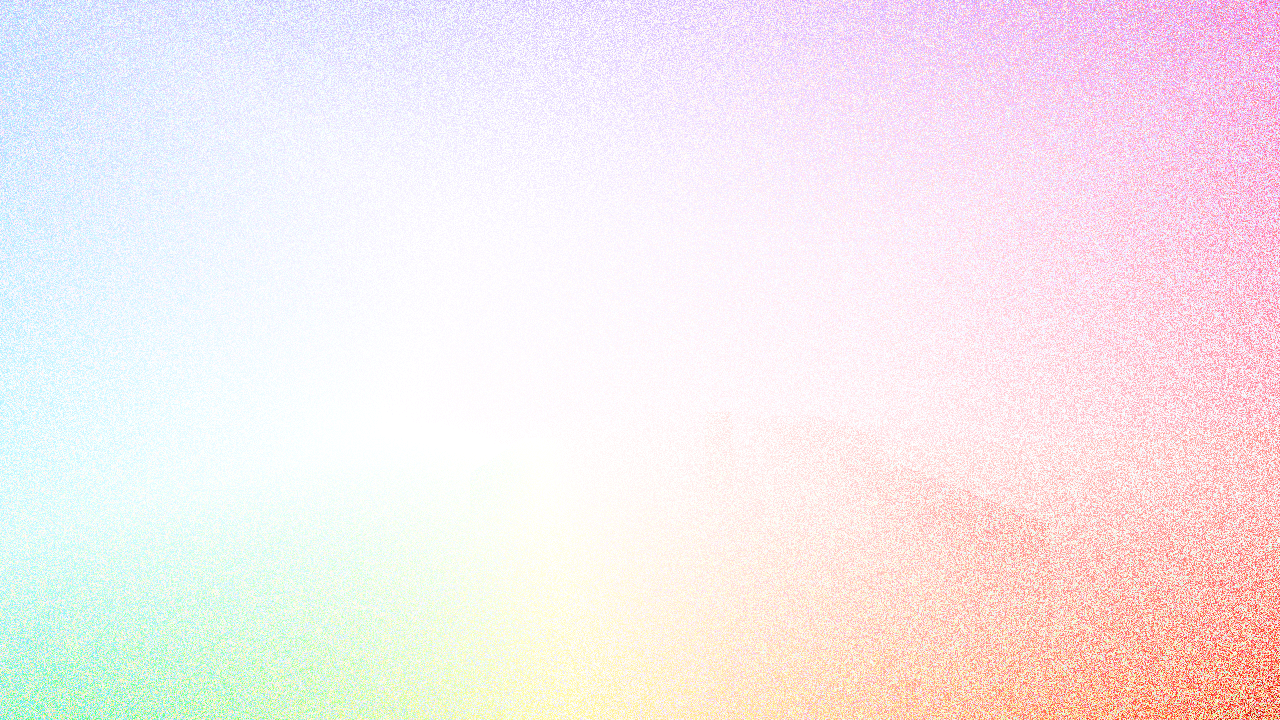}
\caption{Different noise models for the observed data (optical flow, cf.~Fig.~\ref{fig:synth}): top left: additive Gaussian noise ($\mu=0$, $\sigma^{2}=0.001$), top right: additive uniform noise ($\mu=0$, $\sigma^{2}=0.001$), bottom left: multiplicative Gaussian noise ($\mu=1$, $\sigma^{2}=1$), bottom right: multiplicative uniform noise ($\mu=1$, $\sigma^{2}=1$).}
\label{fig:synth_noise}
\end{center}
\end{figure}
\begin{table}[htdp]
\caption{Quantitative evaluation 
of proposed methods (order 1 to 4) 
measuring the 
geodesic error (cf.~\eqref{eq:geo-error})
w.r.t.\ ground truth camera motion.
As input data we used noisy flow observations with the following noise models: additive Gaussian (AG, $\mu=0$), additive uniform (AU, $\mu=0$), multiplicative Gaussian (MG, $\mu=1$) and multiplicative uniform (MU, $\mu=1$) for different variances~$\sigma^{2}$. For intense noise (multiplicative: $\sigma^{2}>10^{-1}$, additive: $\sigma^{2}>10^{-4}$), the first-order method performs better than higher-order models since it is more robust against noise. In contrast, for moderate noise levels, higher-order kinematics are more appropriate.
}
\begin{center}
\begin{tabularx}{\columnwidth}{l l @{\extracolsep{\fill}}llll} \toprule
noise		&   $\sigma^{2}$			&1st order 	   & 	2nd order &	3rd order & 	4th order\\ \midrule
MG		 	&  \multirow{2}{*}{$10^{0}$} 	& \textbf{0.2162}  & 	0.2759     &   	0.2821     & 	0.2866  \\
MU			&  						& \textbf{0.2856}  & 	0.3840     & 	0.3705     & 	0.3705    \\ \midrule
MG 			& \multirow{2}{*}{$10^{-1}$} 	& 0.1597 		    &	0.1644     &  	0.1485     & 	\textbf{0.1423} \\
MU 			&						& \textbf{0.2072}  & 	0.2596     &  	0.2367     &  	0.2287   \\ \midrule
MG 			& \multirow{2}{*}{$10^{-2}$}  	& 0.1417 		    &	0.1184     & 	0.1041     &	\textbf{0.1011}  \\
MU 			& 						& 0.1517  		    & 	0.1353     & 	0.1143     &  	\textbf{0.1082} \\ \midrule
MG			& \multirow{2}{*}{$10^{-3}$}       & 0.1283 		    & 	0.0987     & 	0.0844     &	\textbf{0.0808} \\
MU			& 						& 0.1300 		    &	0.0952 	& 	0.0808 	& 	\textbf{0.0777}  \\ \midrule
AG 			& \multirow{2}{*}{$10^{-3}$} 	& \textbf{0.2859}   & 	0.4355      & 	0.4318      & 	0.4385 \\
AU 			&						& \textbf{0.4835}   &	0.7431      &  	0.7175      & 	0.7071 \\ \midrule
AG			& \multirow{2}{*}{$10^{-4}$} 	& \textbf{0.1598}   & 	0.1695     & 	0.1688     & 	0.1701  \\
AU 			&						& \textbf{0.2176}   &	0.2341      & 	0.2216      & 	0.2193 \\ \midrule
AG			& \multirow{2}{*}{$10^{-5}$} 	& 0.1384 		     &	0.1157      & 	0.1010      &  	\textbf{0.0974} \\
AU 			& 						& 0.1263 		     &	0.1130      &  	0.1009      &  	\textbf{0.0968} \\ \midrule
w/o 			& $0$  					& 0.1264    	     &	0.0893      &  	0.0783      &	\textbf{0.0757} \\ \bottomrule
\end{tabularx}
\end{center}
\label{tab:noise-quantitative}
\end{table}

\begin{remark}
Please note that our model currently does not model noise on depth maps explicitly since it only allows additive noise on the {\em flow measurements} as introduced in~\eqref{eq:observation-eq}. However, we think that the noise term $\epsilon$ should also compensate small deviations of the depth.
\end{remark}

\subsubsection{Evaluation of Kinematics}\label{sec:eval-higher-order}
In the last section we showed that the proposed method is robust against different kinds of measurement noise. 
Now we evaluate the proposed minimum energy filters with higher-order kinematic model
for camera tracks of different complexity. For this purpose, we generate camera tracks for the kinematic models (first to fourth order) by (geometric) numerical integration of corresponding differential equation~\eqref{eq:state-general} for $m\in\{1,2,3,4\}$ where we set $v_{0}\equiv 0$. In order to obtain reasonable paths we use non-trivial initializations for $(E_{0},v_{1}^{0}, v_{2}^{0},v_{3}^{0})$. Then we generate synthetic sequences for the different kinematic tracks and use the ground truth optical flow and depth maps as input for the proposed filters. 

The proposed method uses the parameters $Q=0.1 n^{-1} \eins_{2}$ with $n=1000$; and $S$ was chosen as in \eqref{eq:def-blockdiag}, whereas $s_{1}=1$, $s_{2}=0.001$ and $\alpha=0$.

In Fig.~\ref{fig:eval-kinematics} we visualize the geodetical error~\eqref{eq:geo-error} as well as the camera track reconstructions.
It becomes apparent that for a camera track with constant velocity (Fig.~\ref{fig:geo1}) the minimum energy filter with first-order kinematics~\cite{berger2015second} performs best and reaches the highest accuracy. For the other tracks with higher-order kinematics  (cf.~Figures~\ref{fig:geo2}, \ref{fig:geo3} and \ref{fig:geo4}), the proposed filters with higher-order kinematic model work superiorly to~\cite{berger2015second}.

\begin{figure*}[htbp]
\begin{center}
\subfloat[][reconstruced track: first-order kinematics]{
% This file was created by matlab2tikz.
% Minimal pgfplots version: 1.3
%
%The latest updates can be retrieved from
%  http://www.mathworks.com/matlabcentral/fileexchange/22022-matlab2tikz
%where you can also make suggestions and rate matlab2tikz.
%
\definecolor{mycolor2}{rgb}{0.,0.7,0.3}%
\definecolor{mycolor1}{rgb}{1.00000,0.00000,1.00000}%

% [inline block 1: 8 envs, 154639 chars -> data_tex | \begin{tikzpicture} \begin{axis}[%...]
%
\label{fig:geo4}}

\caption{Reconstruction of the camera tracks (left column) and evaluation of the geodetical error w.r.t.\ ground truth (left column) as computed by the proposed filter with kinematics of order 1, 2, 3 and 4.
We evaluated the performance on simulated camera tracks with kinematic models of different orders: 
constant velocity \protect\subref{fig:geo1},\protect\subref{fig:reconst1}, constant acceleration \protect\subref{fig:geo2},\protect\subref{fig:reconst2} as well as third \protect\subref{fig:geo3},\protect\subref{fig:reconst3} and fourth \protect\subref{fig:geo4},\protect\subref{fig:reconst4} order kinematics. In the constant velocity scenario \protect\subref{fig:geo1}, the first-order filter performs best. On the other scenarios \protect\subref{fig:geo2}, \protect\subref{fig:geo3}, \protect\subref{fig:geo4}, the higher-order methods are superior and lead to the best path reconstructions.}
\label{fig:eval-kinematics}
\end{center}
\end{figure*}

\subsection{Evaluation with Realistic Observations}
In order to demonstrate that the minimum energy filter with higher-order state equations also works under real world conditions, we evaluate our approach on the challenging KITTI odometry benchmark~\cite{Geiger2012CVPR}. This benchmark does not contain ground truth data for optical flow, and depth maps can only be obtained from external laser scanners. Thus, we compute optical flow and depth maps in a preprocessing step using the freely available method by Vogel {\em et al.}~\cite{vogel2013d} which only requires image data. 
Although this method is the top ranked method on the KITTI optical flow benchmark, its results still contain relevant deviations from the true solution and thus provide realistic observation noise to evaluate the performance of our proposed filter. As the preprocessed data of~\cite{vogel2013d} is dense, it causes a high computational effort. Therefore, we only use a sparse subset of data points which are selected randomly. In Section~\ref{sec:numberOfObservations} we will show that a small number of observations is sufficient for good reconstructions.

\subsubsection{Quantitative Evaluation of First and higher-order Models}
\label{sec:Quantitative-KITTI}

For our quantitative evaluations on the KITTI benchmark in Table~\ref{tab:kitti-quantitative}, we initialize our 
first~\cite{berger2015second}
and higher-order approaches with the corresponding identity element on the Lie group, i.e.\ $G_{0} = \Id$, and set the corresponding matrices~$P_{0}$ to the identity matrices.
The quadratic forms of the penalty term of the model noise~$\delta$ are set as shown in~\eqref{eq:def-blockdiag} with $s_{1}=10^{-2}$ and $s_{2} = 10^{-5}$.
To increase the influence of the data term, we set the weighting matrix to
\begin{equation}
Q := \tfrac{1}{n}\eins_{2}\,, \quad n=1000\,.
\end{equation}
On the one hand side, this high-weighting leads to less smoothed camera trajectories, but on the other hand side minimizes the observation error, which is desirable for visual odometry applications. For comparison we also present in Table~\ref{tab:kitti-quantitative} the performance measures of the odometry method~\cite{geiger2011stereoscan}.

We emphasize that the first-order approach~\cite{berger2015second} and second-order method from Theorem~\ref{thm:minEnFilter_Acc} perform better in the case of camera motion reconstruction than the proposed higher-order ($>2$) models with generalized kinematics from Theorem~\ref{thm:MEF-higher-order}. The reason for that is that the real camera motion is influenced by model noise, induced by jumps 
 of the camera, to which the first-order method can adapt faster.
Higher-order models smooth the camera trajectories, which in this case is unfortunate. However, they will be beneficial if the actual camera motion behaves according to the models, as shown in the experiments in Section ~\ref{sec:eval-higher-order}.

Please note that our method currently is not designed to be robust against outliers in the observation. 
In contrast, the approach of Geiger {\em et al.}~\cite{geiger2011stereoscan} uses additional precautions to eliminate violation of the assumption of a single rigid body motion, see e.g.\ sequence~3 in Table~\ref{tab:kitti-quantitative}.

\begin{table*}[htdp]
\caption{Quantitative evaluation of rotational (in degrees) and translational (in meters) error on the first 200 frames of the training set of the KITTI odometry benchmark. We compared the proposed higher-order method (i.e.\ 2nd to 4th) with our first-order method from~\cite{berger2015second}. As a reference method, we also evaluated the approach by Geiger {\em et al.}~\cite{geiger2011stereoscan}. The first and second-order methods outperform the higher-order methods since they can fit more easily to the non-smooth ego-motion data.
}
\begin{center}
\begin{tabularx}{\textwidth}{l l @{\extracolsep{\fill}}ccccccccccc} \toprule
& sequence &  00 & 01 & 02 & 03 & 04 & 05 & 06 & 07 & 08 & 09 & 10   \\ \midrule
 \multirow{5}{*}[-2pt]{\begin{sideways}trans.~error\end{sideways}} & (Geiger~\cite{geiger2011stereoscan}) & \textbf{0.0272} & \textbf{0.0572} & 0.0255 & \textbf{0.0175} & \textbf{0.0161} & \textbf{0.0185} & \textbf{0.0118} & \textbf{0.0160} & 0.1166 & \textbf{0.0175} & \textbf{0.0147} \\
& 1st order \cite{berger2015second} & 0.0284  & 0.0759 & \textbf{0.0188} & 0.0804 & 0.0165 & 0.0188 & 0.0122 & 0.0174 & \textbf{0.1142} & 0.0193  & 0.0205 \\
& 2nd order& 0.0356 & 0.0786 & 0.0289 & 0.0938 & 0.0210 & 0.0288 & 0.0153 & 0.0284 & 0.1153  & 0.0293 & 0.0417 \\
& 3rd order&  0.0358 & 0.0784 & 0.0290 &  0.0924 & 0.0216 & 0.0286 & 0.0175 & 0.0268 & 0.1153  & 0.0258 & 0.0342 \\
& 4th order&  0.0347 & 0.0782 & 0.0275 & 0.0918  & 0.0211 & 0.0277 & 0.0140 & 0.0257 & 0.1155 & 0.0240 & 0.0317 \\ \midrule
 \multirow{5}{*}[-2pt]{\begin{sideways}rot.~error\end{sideways}}  & (Geiger~\cite{geiger2011stereoscan}) & \textbf{0.1773} & \textbf{0.1001} & 0.1552 & \textbf{0.1829} & 0.0970 & 0.1539 & 0.0829 & 0.1770 & 0.1589 & 0.1166 & 0.2001 \\
& 1st order \cite{berger2015second} & \textbf{0.1773} & 0.1139 & 0.1504 & 0.2246 & 0.0836 & \textbf{0.1454} & 0.0765 & \textbf{0.1654} & \textbf{0.1444} & \textbf{0.0911} & 0.1829 \\
& 2nd order&  0.1996 & 0.1183 & \textbf{0.1430} & 0.2448 & \textbf{0.0805} & 0.1566 & \textbf{0.0703} & 0.2113 & 0.1676 & 0.1167 & 0.2388 \\
& 3rd order&  0.2402 & 0.1348 & 0.1872& 0.2719 & 0.1090 & 0.1971 & 0.0875 & 0.2362 & 0.2053 & 0.1335 & 0.2628 \\
& 4th order& 0.2795 & 0.1466 & 0.2223 & 0.3120 & 0.1479 & 0.2335  & 0.1045 & 0.2709 & 0.2318 & 0.1630 & 0.2956 \\ \bottomrule
\end{tabularx}
\end{center}
\label{tab:kitti-quantitative}
\end{table*}%

\subsubsection{Determination of Optimal Number of Observations}\label{sec:numberOfObservations}
Since the evaluation of the functions $A_{k}$ and $D_{k}$ in Theorem~\ref{thm:minEnFilter_Acc} as well as the accurate numerical integration in Section~\ref{sec:numInt} are expensive, we are looking for a good trade-off between the number of required measurements and accuracy. In Table~\ref{tab:evalNumObs} we evaluate the geodetical error for a different number of observations $n$. For $n=1$, our proposed filters do not converge since they are numerically instable. For $n=5,\dots,20$, the geodetical error is fairly small but reaches a minimum for $n=50$.  For $n<5$, the error increases because the ego-motion cannot be reconstructed uniquely (cf.\ Five-point-algorithm~\cite{nister2004efficient}). Likewise, for $n>50$, the error rises due to noisy measurements averaged by the filter.

\begin{table}[tbp]
\caption{Determination of the optimal number of measurements $n$. We evaluated the mean geodetical error our filter with different kinematic models (first to fourth order) on a short sequence (10 frames) for different numbers $n$ of observations. Since the $n$ observations are selected randomly, we repeated the experiment 50 times and averaged finally, to find a representative value. We found an optimal number of measurements for $n=50$.
}
\begin{center}
\begin{tabularx}{\columnwidth}{l @{\extracolsep{\fill}}cccc} \toprule
$n$ 	& 1st order      &	2nd order & 	3rd order & 	4th order \\ \midrule
1000	&	0.1205    &  	0.1361     & 	0.1311     & 	0.1290 \\
500  &	0.1070    &	0.1174     & 	0.1116     & 	0.1096 \\
200  &	0.0915    &	0.0945     & 	0.0902    &	0.0890 \\
100  &	0.0764    &	0.0764     &	0.0739    & 	0.0733 \\
\textbf{50}    &	\textbf{0.0667}    &	\textbf{0.0651}     &	\textbf{0.0638}    &	\textbf{0.0637} \\
20    & 	0.0715    &	0.0703     &	0.0687    &	0.0684 \\
15    & 	0.0709    &	0.0691     &	0.0674    &	0.0672 \\
12    & 	0.0718    &	0.0720     &	0.0702    &	0.0699 \\
10    & 	0.0749    &	0.0735     &	0.0716    &	0.0712 \\
9      &	0.0751    &	0.0747     &	0.0726    &	0.0722 \\
8      & 	0.0772    &	0.0762     &	0.0742    &	0.0738 \\
7      &	0.0735    &	0.0733     &	0.0717    &	0.0714 \\
6      &	0.0786    &	0.0776     &	0.0757    &	0.0753 \\
5      &	0.0789    &	0.0797     &	0.0778    &	0.0774 \\
4      &	0.0856    &	0.0859     &	0.0837    &	0.0831 \\
3      &	0.0917    &	0.0951     &	0.0928    & 	0.0921 \\
2      &	0.1005    &	0.1085     &	0.1058    &  	0.1051 \\ \bottomrule
\end{tabularx}
\end{center}
\label{tab:evalNumObs}
\end{table}

\subsubsection{Influence of the Decay Rate $\alpha$}
In real sequences, the motion is usually not uniform and changes due to acceleration and curves. As demonstrated earlier, higher-order state equations that model accelerations, jerks, etc.\ usually converge faster and yield a better accuracy. However, higher-order models are delayed since it takes some time until the information from the observation is transported to the lowest layer. Furthermore, if the motion changes quickly, then higher-order models will still propagate wrong kinematics. For this reason, in \cite{saccon2013second} a decay $\alpha>0$ rate is introduced and also adopted to our model. For $\alpha=0$, all past information is preserved in the propagation within the filter. For larger values of $\alpha$,  old information about the trajectory has lower influence on the filter and is less respected in future.

For the experiments we use the weighting matrix $Q =  n^{-1} \eins_{2}$, where $n$ is the number of measurements. Furthermore, we use $S$ as in \eqref{eq:def-blockdiag} with the values $s_{1}=5\cdot 10^{-2}$, $s_{2}=5 \cdot 10^{-4}$. The integration step size is set to $\delta=1/50$.

In Fig.~\ref{fig:eval-alpha}, we visualize the influence of different values of $\alpha$ on the minimum energy filters of order 1 to 4. For small decay rates $\alpha$, the filters will converge faster over time, but will also cause errors if the kinematics change. On the other hand, large decay rates adapt more easily to spontaneous changes of kinematics. The filters take longer to converge, however.

\begin{figure*}[htbp]
\begin{center}
\subfloat[][$\alpha=1$]{
% This file was created by matlab2tikz.
% Minimal pgfplots version: 1.3
%
%The latest updates can be retrieved from
%  http://www.mathworks.com/matlabcentral/fileexchange/22022-matlab2tikz
%where you can also make suggestions and rate matlab2tikz.
%
\definecolor{mycolor1}{rgb}{0.,0.7,0.3}%
\definecolor{mycolor2}{rgb}{1.00000,0.00000,1.00000}%
\begin{tikzpicture}

\begin{axis}[%
width=3in,
height=1.5in,
at={(0.771875in,0.483542in)},
scale only axis,
xmin=0,
xmax=50,
ymode=log,
ymin=0.001,
ymax=1,
yminorticks=true,
]
\addplot [color=mycolor1,solid,mark=x,mark options={solid},mark repeat={10}]
  table[row sep=crcr]{%
1	0.819110806857404\\
2	0.686737278925937\\
3	0.450375331578959\\
4	0.178088719915083\\
5	0.0445051328366336\\
6	0.105123625299282\\
7	0.090079625121361\\
8	0.0778716286090705\\
9	0.0573013762453419\\
10	0.0467564854000374\\
11	0.075494937792353\\
12	0.0646064543147729\\
13	0.0444846219757573\\
14	0.0271538025019272\\
15	0.0172840972789903\\
16	0.0292432427346622\\
17	0.0179015918754259\\
18	0.0389292942970995\\
19	0.0288289930132862\\
20	0.0282816974477703\\
21	0.012648905568619\\
22	0.0209869539537162\\
23	0.020254868243973\\
24	0.0295631366204318\\
25	0.0337231074551585\\
26	0.0384296290540214\\
27	0.0257764112354786\\
28	0.0157940801261315\\
29	0.0482453038076423\\
30	0.0509367748133315\\
31	0.0160075510509888\\
32	0.0343046592947919\\
33	0.0689331959535537\\
34	0.0709459968509088\\
35	0.0459678918213854\\
36	0.0451308362042751\\
37	0.0487298000590443\\
38	0.0132267288837568\\
39	0.0530093954200023\\
40	0.0766538433156658\\
41	0.0693573942023762\\
42	0.062539246900583\\
43	0.0370367418934619\\
44	0.0383286854657516\\
45	0.027072881734635\\
46	0.0295234181948668\\
47	0.0262641285878232\\
48	0.0261215031960973\\
49	0.0444671768383698\\
50	0.0320363309453775\\
};
\addplot [color=blue,solid,mark=+,mark options={solid},mark repeat={10}]
  table[row sep=crcr]{%
1	0.819139681761788\\
2	0.689713730028489\\
3	0.47443678339541\\
4	0.233184120767334\\
5	0.0452043597659059\\
6	0.0900868143561742\\
7	0.114592823396447\\
8	0.13249635039855\\
9	0.118351232687633\\
10	0.0866668964897817\\
11	0.0231749377055107\\
12	0.0475724911626145\\
13	0.06400052865852\\
14	0.0470691573998915\\
15	0.0259442164182929\\
16	0.027999479005179\\
17	0.0210150586329033\\
18	0.0278775710833492\\
19	0.0217181578617736\\
20	0.0356209431322434\\
21	0.0304104642625171\\
22	0.00859922688178914\\
23	0.0189233297017372\\
24	0.0423379883465118\\
25	0.0204804703507964\\
26	0.0376069991433635\\
27	0.0463110928607881\\
28	0.0268713252964637\\
29	0.0173888054131146\\
30	0.0380335929071881\\
31	0.0498048151547414\\
32	0.0193505209320186\\
33	0.0254485002842277\\
34	0.095738978233309\\
35	0.0408179055148061\\
36	0.0205495414901375\\
37	0.0568212586940754\\
38	0.0210540330551759\\
39	0.0391734576372261\\
40	0.0624330128889323\\
41	0.0658434387195693\\
42	0.0614481119418851\\
43	0.0415280037571739\\
44	0.0435225476929117\\
45	0.0426041717518201\\
46	0.028434752127796\\
47	0.0207151896189023\\
48	0.0295403001625965\\
49	0.0300471014224594\\
50	0.0341339106541562\\
};
\addplot [color=mycolor2,solid,mark=square,mark options={solid},mark repeat={10}]
  table[row sep=crcr]{%
1	0.81967533282233\\
2	0.704951131926489\\
3	0.540955608393818\\
4	0.362925619576802\\
5	0.196985619056068\\
6	0.06438884540078\\
7	0.0516166033912834\\
8	0.120105730665451\\
9	0.175348896392852\\
10	0.21340267192264\\
11	0.185891182230674\\
12	0.152312731776152\\
13	0.101596492617606\\
14	0.0737600852979877\\
15	0.0521727864714981\\
16	0.0220961047290534\\
17	0.0210006062766515\\
18	0.0440119603583738\\
19	0.0475228630075631\\
20	0.0371424423495869\\
21	0.0283409445749512\\
22	0.0294459557620823\\
23	0.013669473773798\\
24	0.0240145688269931\\
25	0.0226393166439324\\
26	0.0277251173371002\\
27	0.0417602731885803\\
28	0.047300015999846\\
29	0.054759816834935\\
30	0.0275064232002275\\
31	0.0294576380031186\\
32	0.0489989690199059\\
33	0.0462934093542422\\
34	0.02451588970659\\
35	0.048255245475286\\
36	0.0388892914317642\\
37	0.0196160040633376\\
38	0.0359720799239718\\
39	0.0345003553458923\\
40	0.0496068339395049\\
41	0.0464639746513192\\
42	0.0461290685911007\\
43	0.0392513274446485\\
44	0.0412470298954152\\
45	0.0580692354282929\\
46	0.0475449352982883\\
47	0.0214892720391329\\
48	0.0257497237894068\\
49	0.027731039625738\\
50	0.0206707462899135\\
};
\addplot [color=red,solid,mark=diamond,mark options={solid},mark repeat={10}]
  table[row sep=crcr]{%
1	0.826048540424457\\
2	0.760930396670387\\
3	0.693365790651757\\
4	0.628499682030286\\
5	0.570871340851576\\
6	0.51727412023786\\
7	0.457169321682266\\
8	0.41370443140578\\
9	0.375659946561482\\
10	0.336353410295913\\
11	0.29876088239943\\
12	0.266604874881828\\
13	0.231857835906883\\
14	0.199793776578307\\
15	0.171348339467651\\
16	0.157362096385055\\
17	0.101402757189702\\
18	0.0890013378486751\\
19	0.0867022283329267\\
20	0.0820714543867389\\
21	0.0625843185266124\\
22	0.0517680081798019\\
23	0.0413186701214014\\
24	0.0291526966664221\\
25	0.0322436327979921\\
26	0.0315941710793751\\
27	0.02689512878013\\
28	0.0307457946731469\\
29	0.0324537720502575\\
30	0.0321193739888374\\
31	0.0244421903533814\\
32	0.0186029010988193\\
33	0.0270267348391422\\
34	0.0199541050577214\\
35	0.0272903220575576\\
36	0.034310418553293\\
37	0.0241830028092413\\
38	0.0152830078169105\\
39	0.0254337566729754\\
40	0.0721015717232633\\
41	0.051976241281535\\
42	0.0525962200313343\\
43	0.0328697751827286\\
44	0.0367251223797231\\
45	0.0355982875506123\\
46	0.0381045907867074\\
47	0.0231247013925955\\
48	0.0170389797267715\\
49	0.0117155187945456\\
50	0.00979094030144879\\
};
%\addplot [color=black,solid,mark=asterisk,mark options={solid},mark repeat={10}]
%  table[row sep=crcr]{%
%1	0.860442801563144\\
%2	0.186332842502743\\
%3	0.170796330920837\\
%4	0.150646966138409\\
%5	0.131501960944983\\
%6	0.118088164944508\\
%7	0.104573873421626\\
%8	0.0886351097787852\\
%9	0.0771959197150627\\
%10	0.0624570765620977\\
%11	0.0506656855078441\\
%12	0.0471021218162753\\
%13	0.0428406771042222\\
%14	0.0352510540172662\\
%15	0.0249991874130177\\
%16	0.0237786271665267\\
%17	0.0192440292400487\\
%18	0.0221645899049588\\
%19	0.0157376777084498\\
%20	0.0175217976391847\\
%21	0.0148839074126999\\
%22	0.0195603157559831\\
%23	0.0132507589228665\\
%24	0.0160988056715682\\
%25	0.0144475402902285\\
%26	0.0137923269108549\\
%27	0.0138166535495527\\
%28	0.00764804715163886\\
%29	0.00973182560769427\\
%30	0.011471528120766\\
%31	0.0119984425349071\\
%32	0.0130587061822539\\
%33	0.0171660538151698\\
%34	0.015579980131466\\
%35	0.0116397309449516\\
%36	0.00479774262537216\\
%37	0.0172257953550531\\
%38	0.018114176638722\\
%39	0.0341094884740672\\
%40	0.0827289469178641\\
%41	0.0497632096990237\\
%42	0.054009046472178\\
%43	0.0291466287449961\\
%44	0.0442669536209461\\
%45	0.0305781370508759\\
%46	0.0364282048173205\\
%47	0.0161787138302274\\
%48	0.0266287819264419\\
%49	0.0186939082378487\\
%50	0.0144497303702357\\
%};
\end{axis}
\end{tikzpicture}%
\label{fig:alpha1}}
\subfloat[][$\alpha=2$]{
% This file was created by matlab2tikz.
% Minimal pgfplots version: 1.3
%
%The latest updates can be retrieved from
%  http://www.mathworks.com/matlabcentral/fileexchange/22022-matlab2tikz
%where you can also make suggestions and rate matlab2tikz.
%
\definecolor{mycolor1}{rgb}{0.,0.7,0.3}%
\definecolor{mycolor2}{rgb}{1.00000,0.00000,1.00000}%
\begin{tikzpicture}

\begin{axis}[%
width=3in,
height=1.5in,
at={(0.771875in,0.483542in)},
scale only axis,
xmin=0,
xmax=50,
ymode=log,
ymin=0.001,
ymax=1,
yminorticks=true,
]
\addplot [color=mycolor1,solid,mark=x,mark options={solid},forget plot,mark repeat={10}]
  table[row sep=crcr]{%
1	0.82054875275133\\
2	0.7251407501345\\
3	0.584237074543593\\
4	0.421322315274644\\
5	0.258147532337068\\
6	0.108533778097832\\
7	0.0342206944394199\\
8	0.122073204357002\\
9	0.171622647697326\\
10	0.195196457031112\\
11	0.19885701368615\\
12	0.180417794560627\\
13	0.143495555913327\\
14	0.108393963512945\\
15	0.0716778471429441\\
16	0.0375719302768269\\
17	0.0255928502447108\\
18	0.049981078288944\\
19	0.0624971918724686\\
20	0.0696830327587358\\
21	0.0674201369591682\\
22	0.0704481136856109\\
23	0.0570224466301947\\
24	0.0207049359322808\\
25	0.00843287978907918\\
26	0.0193844430162578\\
27	0.0363828844086903\\
28	0.037764808181194\\
29	0.0430121487582023\\
30	0.0349787286231239\\
31	0.0168313049738186\\
32	0.0130723697039522\\
33	0.0215387008974324\\
34	0.00929956567178212\\
35	0.023972917162039\\
36	0.0253496434111823\\
37	0.0111404714894789\\
38	0.0209127058162947\\
39	0.0264230685786356\\
40	0.0477172903749697\\
41	0.0345866980616225\\
42	0.0374968178265437\\
43	0.0362063498190782\\
44	0.0346231431130673\\
45	0.0413660347047617\\
46	0.0461195418044706\\
47	0.0300579380422413\\
48	0.0243901595775093\\
49	0.0261675704987216\\
50	0.0227773380028314\\
};
\addplot [color=blue,solid,mark=+,mark options={solid},forget plot,mark repeat={10}]
  table[row sep=crcr]{%
1	0.820567609949148\\
2	0.726329219203199\\
3	0.592073772087995\\
4	0.441638766346897\\
5	0.292380189411363\\
6	0.152152794974472\\
7	0.0388246661314053\\
8	0.0811050292606888\\
9	0.146401656748815\\
10	0.188497496391609\\
11	0.212979698739648\\
12	0.214073775622815\\
13	0.196544736052822\\
14	0.171080808141849\\
15	0.140892094239172\\
16	0.102242877608284\\
17	0.0584997569891227\\
18	0.0397793497628917\\
19	0.0303286610381144\\
20	0.0407415919891817\\
21	0.0504148038256358\\
22	0.0639905456338454\\
23	0.0626396824250312\\
24	0.0352432720754799\\
25	0.0225570569227884\\
26	0.00389298761289134\\
27	0.0156117633858739\\
28	0.0226888285035945\\
29	0.0314891560579702\\
30	0.0323040696553813\\
31	0.0218754202429536\\
32	0.00948727304707528\\
33	0.00408396232879531\\
34	0.00310815860577346\\
35	0.0223437621111718\\
36	0.0210377118444324\\
37	0.013723798982162\\
38	0.0185716907602088\\
39	0.0261405458070838\\
40	0.0483394067106549\\
41	0.0321137322965443\\
42	0.0338353072527438\\
43	0.0326384229179884\\
44	0.0303060101030926\\
45	0.0362164634418762\\
46	0.0395866010680628\\
47	0.0275623515243054\\
48	0.0251154632738956\\
49	0.0264781760541987\\
50	0.0240037518559576\\
};
\addplot [color=mycolor2,solid,mark=square,mark options={solid},forget plot,mark repeat={10}]
  table[row sep=crcr]{%
1	0.820950410038627\\
2	0.733737109885417\\
3	0.620231980274768\\
4	0.498471757699783\\
5	0.378360457312768\\
6	0.260384570930473\\
7	0.150894549714868\\
8	0.0515933180417824\\
9	0.0504215951654827\\
10	0.117368179084302\\
11	0.173048722900871\\
12	0.209246557068345\\
13	0.229304272773005\\
14	0.236756698833052\\
15	0.235628221896628\\
16	0.216861050555034\\
17	0.189224988095362\\
18	0.161899015421563\\
19	0.133468136411242\\
20	0.110968266836159\\
21	0.0916247357761498\\
22	0.0721880626461516\\
23	0.0578650186859703\\
24	0.0469634837560893\\
25	0.0413810688978336\\
26	0.0461182422006141\\
27	0.0393464740528962\\
28	0.0363194428567054\\
29	0.0300542980947672\\
30	0.0294107351001904\\
31	0.0277252817388412\\
32	0.0298911682088355\\
33	0.03363541616867\\
34	0.0343449049277374\\
35	0.0311516570250982\\
36	0.0305837744383147\\
37	0.0138830462692071\\
38	0.00505803022133815\\
39	0.0231928644305537\\
40	0.0562438008289699\\
41	0.0340622967863464\\
42	0.0352605923987297\\
43	0.0373426767556177\\
44	0.0341465993823011\\
45	0.0437216187851699\\
46	0.0418854043812565\\
47	0.037196615955612\\
48	0.0320856613517993\\
49	0.0220535202161765\\
50	0.0183043136357404\\
};
\addplot [color=red,solid,mark=diamond,mark options={solid},forget plot,mark repeat={10}]
  table[row sep=crcr]{%
1	0.826258392908405\\
2	0.770533873954496\\
3	0.713767367062766\\
4	0.661873699499507\\
5	0.617153906447673\\
6	0.572972760442228\\
7	0.529226459821335\\
8	0.484028942715512\\
9	0.426945072840575\\
10	0.382201985683166\\
11	0.347822839829709\\
12	0.318359906100377\\
13	0.287818425987235\\
14	0.257359195736416\\
15	0.226225459141243\\
16	0.206026625159003\\
17	0.187104559411535\\
18	0.170082118268259\\
19	0.160835793129858\\
20	0.151766855222247\\
21	0.142365683349141\\
22	0.137350691639366\\
23	0.12321217143741\\
24	0.0892252394983445\\
25	0.0711148079068049\\
26	0.0551455896431689\\
27	0.0439588407302388\\
28	0.0415274051324592\\
29	0.0382793934535437\\
30	0.038222646848741\\
31	0.0355544995130547\\
32	0.0369106014940716\\
33	0.0418111391030943\\
34	0.0399974002988465\\
35	0.040428194735016\\
36	0.0395325396717174\\
37	0.0329289742216682\\
38	0.0334539236002852\\
39	0.0510219894425423\\
40	0.0929027410275445\\
41	0.0666553757315326\\
42	0.0690258403340913\\
43	0.0509460644437312\\
44	0.0507737643777646\\
45	0.0337607047055368\\
46	0.0315400139863261\\
47	0.0176374949751009\\
48	0.0161552566029074\\
49	0.0161301753792549\\
50	0.0129456205031701\\
};
%\addplot [color=black,solid,mark=asterisk,mark options={solid},forget plot,mark repeat={10}]
%  table[row sep=crcr]{%
%1	0.860442801563144\\
%2	0.186332842502743\\
%3	0.170796330920837\\
%4	0.150646966138409\\
%5	0.131501960944983\\
%6	0.118088164944508\\
%7	0.104573873421626\\
%8	0.0886351097787852\\
%9	0.0771959197150627\\
%10	0.0624570765620977\\
%11	0.0506656855078441\\
%12	0.0471021218162753\\
%13	0.042040760309037\\
%14	0.0351982869775215\\
%15	0.023404197019878\\
%16	0.023084305300096\\
%17	0.0188084964161866\\
%18	0.0247880743618321\\
%19	0.0183601305983474\\
%20	0.0181001915376818\\
%21	0.0171983098055536\\
%22	0.0195982583158513\\
%23	0.012875757565263\\
%24	0.0158616060260014\\
%25	0.0160089914789672\\
%26	0.0159122045068286\\
%27	0.0136819014818806\\
%28	0.00626445105020482\\
%29	0.00909558948770026\\
%30	0.0101341036153416\\
%31	0.0131316721526003\\
%32	0.0142823349357974\\
%33	0.0160312798180103\\
%34	0.0178398541816166\\
%35	0.012510346481042\\
%36	0.00530593890662638\\
%37	0.0182437549356497\\
%38	0.0179586064874972\\
%39	0.0313006907005245\\
%40	0.0808662256804891\\
%41	0.0422642382655506\\
%42	0.0567775845761435\\
%43	0.0289996040907018\\
%44	0.0484760207978486\\
%45	0.0234752564868771\\
%46	0.0306360631764048\\
%47	0.0139243963134666\\
%48	0.0279604868500057\\
%49	0.019799003853857\\
%50	0.014342432111729\\
%};
\end{axis}
\end{tikzpicture}%
\label{fig:alpha2}} \\
\subfloat[][$\alpha=4$]{
% This file was created by matlab2tikz.
% Minimal pgfplots version: 1.3
%
%The latest updates can be retrieved from
%  http://www.mathworks.com/matlabcentral/fileexchange/22022-matlab2tikz
%where you can also make suggestions and rate matlab2tikz.
%
\definecolor{mycolor1}{rgb}{0.,0.7,0.3}%
\definecolor{mycolor2}{rgb}{1.00000,0.00000,1.00000}%
\begin{tikzpicture}

\begin{axis}[%
width=3in,
height=1.5in,
at={(0.771875in,0.483542in)},
scale only axis,
xmin=0,
xmax=50,
ymode=log,
ymin=0.001,
ymax=1,
yminorticks=true,
]
\addplot [color=mycolor1,solid,mark=x,mark options={solid},forget plot,mark repeat={10}]
  table[row sep=crcr]{%
1	0.812262608170916\\
2	0.740100314764533\\
3	0.664533659829572\\
4	0.575493052290553\\
5	0.483880412294086\\
6	0.394296883634613\\
7	0.3095859588845\\
8	0.225420108073038\\
9	0.148549480666781\\
10	0.0783471331180806\\
11	0.0170458423716538\\
12	0.0561146532701682\\
13	0.107396339432735\\
14	0.153730722149856\\
15	0.198454794198863\\
16	0.22286999602373\\
17	0.244928888202313\\
18	0.262600335028934\\
19	0.272445297666711\\
20	0.279019420928776\\
21	0.286152746453286\\
22	0.292405020192784\\
23	0.298728251803779\\
24	0.304523022325284\\
25	0.30112138915933\\
26	0.302573670122416\\
27	0.302610843397952\\
28	0.289818153151261\\
29	0.278593711167513\\
30	0.264028764331439\\
31	0.251666266013372\\
32	0.231039878237248\\
33	0.208676026553144\\
34	0.183504298424277\\
35	0.161985007862305\\
36	0.134890569421065\\
37	0.118946281995042\\
38	0.0968367703972041\\
39	0.0672412503313942\\
40	0.0555281204986606\\
41	0.0390117208913724\\
42	0.0453564763080756\\
43	0.0347666485993879\\
44	0.0440361809089731\\
45	0.0388853474389784\\
46	0.0488409555499643\\
47	0.0541125413487427\\
48	0.0540833055015042\\
49	0.0595658187556218\\
50	0.066626773979346\\
};
\addplot [color=blue,solid,mark=+,mark options={solid},forget plot,mark repeat={10}]
  table[row sep=crcr]{%
1	0.81227198737438\\
2	0.74031975146175\\
3	0.665337066060263\\
4	0.577198530893916\\
5	0.486692137862809\\
6	0.398316935739736\\
7	0.314798494798819\\
8	0.231740013907677\\
9	0.155832760344676\\
10	0.0859382750256029\\
11	0.021930733065823\\
12	0.0480907227652365\\
13	0.0998589365279242\\
14	0.147057925652409\\
15	0.19270932150622\\
16	0.217970666489612\\
17	0.240745636795724\\
18	0.259049991206363\\
19	0.269641057526688\\
20	0.277010185667853\\
21	0.285089199378427\\
22	0.292408184461259\\
23	0.299816096799195\\
24	0.306638203860973\\
25	0.304211503934226\\
26	0.30657763809327\\
27	0.30760716259559\\
28	0.295833367688699\\
29	0.285610821073338\\
30	0.272061306326062\\
31	0.260667015387482\\
32	0.240851148869616\\
33	0.21927484392953\\
34	0.19479019828716\\
35	0.173860282128234\\
36	0.14722696235563\\
37	0.131495392561457\\
38	0.109532373292824\\
39	0.0789403640137068\\
40	0.0590260889055768\\
41	0.0424816901849826\\
42	0.0435605403421907\\
43	0.031640511378123\\
44	0.0381577971255416\\
45	0.0341649364129648\\
46	0.0450294674531067\\
47	0.0511090220189535\\
48	0.0519505038906713\\
49	0.0584980989532495\\
50	0.0669392888939931\\
};
\addplot [color=mycolor2,solid,mark=square,mark options={solid},forget plot,mark repeat={10}]
  table[row sep=crcr]{%
1	0.812506760453429\\
2	0.742570855588335\\
3	0.671455483951161\\
4	0.58855986159265\\
5	0.504187914640476\\
6	0.4223920051059\\
7	0.345317464373859\\
8	0.268244598503202\\
9	0.197574972302452\\
10	0.130016564839725\\
11	0.0651568516676631\\
12	0.0171058134791875\\
13	0.057837829248681\\
14	0.108049973729363\\
15	0.157849468446575\\
16	0.186723599178537\\
17	0.21258294878189\\
18	0.233461390007247\\
19	0.247285257110833\\
20	0.258221399574248\\
21	0.270847759666741\\
22	0.283386050111785\\
23	0.296140808341775\\
24	0.307991629353779\\
25	0.310369712299216\\
26	0.317241907340791\\
27	0.323291519763002\\
28	0.316806660723373\\
29	0.311952820246825\\
30	0.304000861793499\\
31	0.297920138958197\\
32	0.282847883297547\\
33	0.266256033666404\\
34	0.246163884680512\\
35	0.229042796945214\\
36	0.20583280828096\\
37	0.192513026517442\\
38	0.172449747133129\\
39	0.140052886989856\\
40	0.101272844538974\\
41	0.0873899541171036\\
42	0.0705465314380256\\
43	0.0560895883392862\\
44	0.0374378784573322\\
45	0.0335703233494281\\
46	0.0315471771531577\\
47	0.0331161340777865\\
48	0.0340334084246041\\
49	0.0433497638599961\\
50	0.0571952968164948\\
};
\addplot [color=red,solid,mark=diamond,mark options={solid},forget plot,mark repeat={10}]
  table[row sep=crcr]{%
1	0.817092464066622\\
2	0.764789148538259\\
3	0.720829672839145\\
4	0.672788557513289\\
5	0.629468930452707\\
6	0.592669198240352\\
7	0.561398987202121\\
8	0.529525167723832\\
9	0.501092102145967\\
10	0.464294054225102\\
11	0.429566325928266\\
12	0.402701112347911\\
13	0.364724227807375\\
14	0.327931511709988\\
15	0.301979924199167\\
16	0.28583409127962\\
17	0.27767875880802\\
18	0.270169222784454\\
19	0.268437030589675\\
20	0.263861292160952\\
21	0.256935677470148\\
22	0.252350972122303\\
23	0.244923225202359\\
24	0.237180469290726\\
25	0.235583375664656\\
26	0.228214421221037\\
27	0.217816554921346\\
28	0.213087476623036\\
29	0.205265670436045\\
30	0.199656410283111\\
31	0.192219549728363\\
32	0.188838746652902\\
33	0.1932102685335\\
34	0.190574052856902\\
35	0.185467691358691\\
36	0.18388566578741\\
37	0.17366112411167\\
38	0.166866135525155\\
39	0.172106065162304\\
40	0.202556353341781\\
41	0.170882230449981\\
42	0.165858177288003\\
43	0.138624163881847\\
44	0.132352360041179\\
45	0.102655649058435\\
46	0.0912493331873553\\
47	0.0769535014902978\\
48	0.0654404849575724\\
49	0.0622358545835309\\
50	0.0593786071845636\\
};
%\addplot [color=black,solid,mark=asterisk,mark options={solid},forget plot,mark repeat={10}]
%  table[row sep=crcr]{%
%1	0.860442801563144\\
%2	0.186332842502743\\
%3	0.170796330920837\\
%4	0.150646966138409\\
%5	0.131501960944983\\
%6	0.118088164944508\\
%7	0.104573873421626\\
%8	0.0886351097787852\\
%9	0.0771959197150627\\
%10	0.0624570765620977\\
%11	0.0506656855078441\\
%12	0.0471021218162753\\
%13	0.042040760309037\\
%14	0.0351982869775215\\
%15	0.023404197019878\\
%16	0.023084305300096\\
%17	0.0188084964161866\\
%18	0.0247880743618321\\
%19	0.0183601305983474\\
%20	0.0181001915376818\\
%21	0.0171983098055536\\
%22	0.0195982583158513\\
%23	0.012875757565263\\
%24	0.0158616060260014\\
%25	0.0160089914789672\\
%26	0.0159122045068286\\
%27	0.0136819014818806\\
%28	0.00626445105020482\\
%29	0.00909558948770026\\
%30	0.0101341036153416\\
%31	0.0131316721526003\\
%32	0.0142823349357974\\
%33	0.0160312798180103\\
%34	0.0178398541816166\\
%35	0.012510346481042\\
%36	0.00530593890662638\\
%37	0.0182437549356497\\
%38	0.0179586064874972\\
%39	0.0313006907005245\\
%40	0.0808662256804891\\
%41	0.0422642382655506\\
%42	0.0567775845761435\\
%43	0.0289996040907018\\
%44	0.0437194178557432\\
%45	0.030226326648231\\
%46	0.0309111909023279\\
%47	0.0213410238368697\\
%48	0.0274708622708856\\
%49	0.0190888393458976\\
%50	0.0143508826849275\\
%};
\end{axis}
\end{tikzpicture}%
\label{fig:alpha4}}
\subfloat[][$\alpha=8$]{
% This file was created by matlab2tikz.
% Minimal pgfplots version: 1.3
%
%The latest updates can be retrieved from
%  http://www.mathworks.com/matlabcentral/fileexchange/22022-matlab2tikz
%where you can also make suggestions and rate matlab2tikz.
%
\definecolor{mycolor1}{rgb}{0.,0.7,0.3}%
\definecolor{mycolor2}{rgb}{1.00000,0.00000,1.00000}%
\begin{tikzpicture}

\begin{axis}[%
width=3in,
height=1.5in,
at={(0.771875in,0.483542in)},
scale only axis,
xmin=0,
xmax=50,
ymode=log,
ymin=0.001,
ymax=1,
yminorticks=true,
legend style={at={(0.,0.51)},anchor=north west,legend cell align=left,align=left,draw=black, font=\scriptsize}
]
\addplot [color=mycolor1,solid,mark=x,mark options={solid},mark repeat={10}]
  table[row sep=crcr]{%
1	0.832623791440766\\
2	0.802323235869548\\
3	0.778103630765886\\
4	0.753466182700423\\
5	0.729339272475603\\
6	0.702026001607519\\
7	0.676183755076265\\
8	0.646879823121408\\
9	0.619540724208343\\
10	0.587137312208294\\
11	0.551797812534385\\
12	0.518735735968859\\
13	0.486190467017471\\
14	0.45377668725291\\
15	0.420624751291729\\
16	0.396625775046353\\
17	0.374862629150222\\
18	0.351771621327539\\
19	0.336032273552392\\
20	0.317843193351455\\
21	0.299283903869907\\
22	0.283584699141193\\
23	0.268700242822041\\
24	0.252445083436409\\
25	0.24138349542356\\
26	0.224233409572084\\
27	0.210605061213806\\
28	0.20258624648941\\
29	0.194686587335282\\
30	0.190189305525783\\
31	0.180493644063988\\
32	0.18021020173335\\
33	0.180265198031783\\
34	0.177431231455203\\
35	0.17087772439609\\
36	0.16740262184618\\
37	0.151106050448369\\
38	0.118507326196071\\
39	0.0844866539626363\\
40	0.0584804054509082\\
41	0.0518260968119949\\
42	0.0553680547846321\\
43	0.0575649228231918\\
44	0.0568698121076194\\
45	0.0680385735478866\\
46	0.075614306999014\\
47	0.094347655376415\\
48	0.109954443224017\\
49	0.12072731399021\\
50	0.130358202447446\\
};
\addlegendentry{4th order model};
\addplot [color=blue,solid,mark=+,mark options={solid},mark repeat={10}]
  table[row sep=crcr]{%
1	0.832624760243145\\
2	0.802331170113454\\
3	0.778123545152627\\
4	0.753501950019592\\
5	0.729393728144084\\
6	0.702101607029759\\
7	0.676282976246462\\
8	0.647003980349095\\
9	0.619691751281511\\
10	0.587315606646853\\
11	0.552004968461208\\
12	0.51897731261682\\
13	0.486462344594112\\
14	0.454076654018573\\
15	0.420945932513415\\
16	0.39697156225734\\
17	0.375229392022843\\
18	0.352164975893307\\
19	0.336441977123378\\
20	0.318271135903195\\
21	0.299721790812475\\
22	0.284006490019851\\
23	0.26910450933949\\
24	0.252836134545054\\
25	0.241768639859624\\
26	0.224617241199364\\
27	0.210960359534485\\
28	0.202913496062142\\
29	0.194956291987999\\
30	0.190411043592671\\
31	0.180684112701108\\
32	0.180390123847892\\
33	0.180444012233147\\
34	0.17760954202683\\
35	0.171047306223787\\
36	0.167583163838201\\
37	0.151276694789948\\
38	0.118661001130575\\
39	0.0846905675443677\\
40	0.0587356478428279\\
41	0.0520516783958106\\
42	0.055578250801452\\
43	0.0577377708026918\\
44	0.0569894726216338\\
45	0.0680540756930299\\
46	0.0756097079142863\\
47	0.0943453163066872\\
48	0.109910142538215\\
49	0.120696083191111\\
50	0.130326842917554\\
};
\addlegendentry{3rd order model};
\addplot [color=mycolor2,solid,mark=square,mark options={solid},mark repeat={10}]
  table[row sep=crcr]{%
1	0.832664508117982\\
2	0.80254113084118\\
3	0.778595444707966\\
4	0.754308653583176\\
5	0.73058562614239\\
6	0.703722740927881\\
7	0.678378605340203\\
8	0.649595776968036\\
9	0.622815520440381\\
10	0.590975668304964\\
11	0.556231229143101\\
12	0.523882959693335\\
13	0.491960725267931\\
14	0.460123123229867\\
15	0.427405933389921\\
16	0.403913493195287\\
17	0.382583139446542\\
18	0.360041671873522\\
19	0.344640520846644\\
20	0.326820463631354\\
21	0.308454554486454\\
22	0.292430538692517\\
23	0.277191885148575\\
24	0.260671072597744\\
25	0.249489848546782\\
26	0.23231724103735\\
27	0.218105209123364\\
28	0.209498946896481\\
29	0.200390438464671\\
30	0.194866080270288\\
31	0.18448374905592\\
32	0.183922330395997\\
33	0.183906279739992\\
34	0.181020973294178\\
35	0.174259240031499\\
36	0.171009210665704\\
37	0.154524208265185\\
38	0.121637549802337\\
39	0.0887081323495109\\
40	0.063874903824477\\
41	0.0566447061749278\\
42	0.0599423968395681\\
43	0.0614527701029971\\
44	0.0597698674248046\\
45	0.068777191544816\\
46	0.075924105256094\\
47	0.0946037019840375\\
48	0.109280659884613\\
49	0.120292680491716\\
50	0.129872506123243\\
};
\addlegendentry{2nd order model};
\addplot [color=red,solid,mark=diamond,mark options={solid},mark repeat={10}]
  table[row sep=crcr]{%
1	0.834212353618096\\
2	0.808521899250334\\
3	0.79094174230545\\
4	0.774615100722044\\
5	0.759875400681348\\
6	0.742935322115652\\
7	0.728486162319626\\
8	0.711043723202766\\
9	0.696460644471381\\
10	0.677197700225641\\
11	0.656189697146332\\
12	0.641525852255456\\
13	0.625015027870824\\
14	0.607879367854536\\
15	0.588310910951465\\
16	0.579721717900424\\
17	0.57260574043025\\
18	0.566737453058771\\
19	0.564555480112627\\
20	0.556641080935473\\
21	0.542981049712469\\
22	0.532571337280807\\
23	0.521865118533387\\
24	0.511450322573372\\
25	0.50786373300195\\
26	0.50026975562665\\
27	0.486209869983323\\
28	0.473904835445385\\
29	0.447222843236083\\
30	0.419458093145913\\
31	0.39151921961896\\
32	0.370874798842511\\
33	0.35508371171999\\
34	0.337864197623142\\
35	0.324486430152998\\
36	0.318886248229518\\
37	0.304013685701537\\
38	0.294950964612427\\
39	0.302033464253978\\
40	0.331470497630286\\
41	0.307879551857246\\
42	0.303837639538664\\
43	0.28493318632312\\
44	0.2824397504388\\
45	0.255447607768064\\
46	0.24720377026837\\
47	0.234003532767656\\
48	0.224740038533723\\
49	0.222174139460211\\
50	0.217448792990116\\
};
\addlegendentry{1st order model};
%\addplot [color=black,solid,mark=asterisk,mark options={solid},mark repeat={10}]
%  table[row sep=crcr]{%
%1	0.860442801563144\\
%2	0.186332842502743\\
%3	0.170796330920837\\
%4	0.150646966138409\\
%5	0.131501960944983\\
%6	0.118088164944508\\
%7	0.104573873421626\\
%8	0.0886351097787852\\
%9	0.0771959197150627\\
%10	0.0624570765620977\\
%11	0.0506656855078441\\
%12	0.0471021218162753\\
%13	0.0428406771042222\\
%14	0.0355326399853928\\
%15	0.0253371131799605\\
%16	0.0215755260614023\\
%17	0.0216520956375555\\
%18	0.0257537315654152\\
%19	0.0171865766104752\\
%20	0.0187168923149498\\
%21	0.0170332991536854\\
%22	0.0176026353723295\\
%23	0.0117064822269413\\
%24	0.0150975032192886\\
%25	0.0178168358608092\\
%26	0.0143195026171193\\
%27	0.0151713107442545\\
%28	0.00930300568199799\\
%29	0.00740958809668733\\
%30	0.0115249796245647\\
%31	0.0132322863523193\\
%32	0.0141128762064094\\
%33	0.017241970033191\\
%34	0.0163845405176176\\
%35	0.0105538851739042\\
%36	0.00580104733701642\\
%37	0.0161786169825456\\
%38	0.0180666080686412\\
%39	0.0343527817904018\\
%40	0.0812868206159767\\
%41	0.0484806519049805\\
%42	0.0582092922539941\\
%43	0.0280410243945703\\
%44	0.0461362072747195\\
%45	0.028411203218597\\
%46	0.0341177623041204\\
%47	0.016559821539071\\
%48	0.027456634191506\\
%49	0.0188419843266881\\
%50	0.0125012744180781\\
%};
%\addlegendentry{Geiger et al.};
\end{axis}
\end{tikzpicture}%
\label{fig:alpha8}}
\caption{Evaluation of the translational error (in meters) of the minimum energy filter regarding the first, second, third and fourth order state equation on the first 50 frames of sequence~0 of the KITTI odometry sequence. For small values of $\alpha,$ the filter memorizes past information and converges fast, see~Fig.~(\ref{fig:alpha1}). Although higher-order filters converge faster, they cause oscillation due to the time delay that is required to propagate information into higher-order derivatives of the kinematics. Since for large values of $\alpha$ past information is neglected, the filters converge slower and the difference between second, third and fourth order models become smaller, while the oscillations disappear. Please note that for this experiments the weighting matrices $S$ and $Q$ are kept fixed. To further reduce the error for large $\alpha$ we propose to adapt the weights.
}
\label{fig:eval-alpha}
\end{center}
\end{figure*}
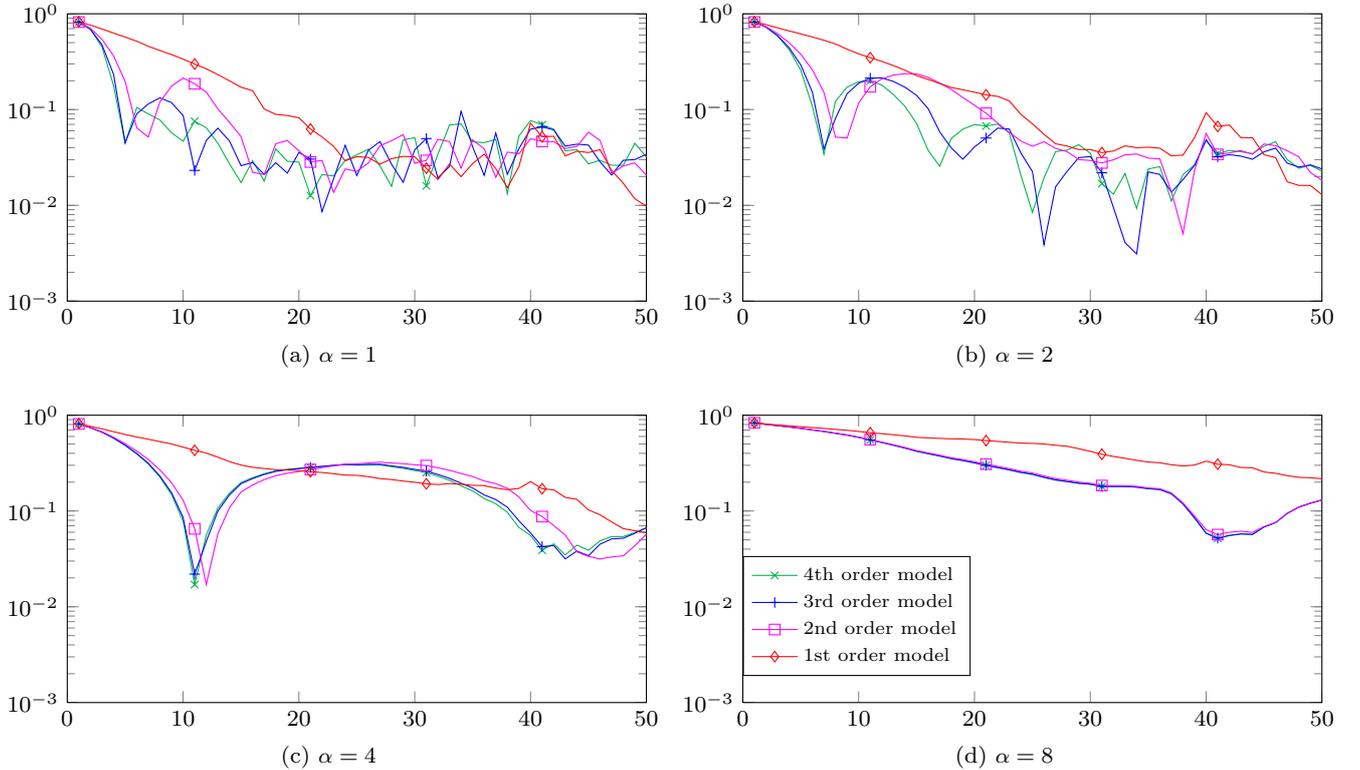

\subsection{Comparison with the Extended Kalman Filters}

\subsubsection{Experiments with Linear Observation Equation}

For the experiments in Fig.~\ref{fig:EKF-vs-MEF} we use four observation equations ($n=4$), and the vectors $a_{k}$ in \eqref{eq:linear-observer} are chosen as
\begin{equation}
a_{k} = e_{k}^{4}\,, \quad k \in [4]\,,
\end{equation}
to extract information from all directions. We generate the ground truth from an arbitrary initialization by integration of \eqref{eq:state-eq}  with multivariate Gaussian noise with mean $\mathbf{0}_{12}$ and diagonal covariance matrix $S = \eins_{12}$. As shown in \cite{bourmaud2015continuous}, we integrate the ground truth with ten times smaller step sizes than the filtering equations of extended Kalman and minimum energy filter. Afterwards we generate the observations with \eqref{eq:hk-linear-case} and Gaussian noise with covariance $Q = 10^{-8} \eins_{4}$ and set the covariance matrices $S$ and $Q$ in Algorithm~\ref{Alg:ExtendedKalman} to the same values. However, the matrix $Q$ for the minimum energy filter in Proposition~\ref{prop:MEF-linear} is set to $Q = 100 \,\eins_{4}$ to give more weight to the observations for faster convergence. Note that for the extended Kalman Filter the choice $Q = 100 \, \eins_{4}$  leads to a worse performance, which is why we use the true covariance instead.

As a reference, we apply our own implementation of the method by Bourmaud {\em et al.}~\cite{bourmaud2015continuous} adapted to our model. The results are demonstrated in Fig.~\ref{fig:EKF-vs-MEF}.
We suppose that the main reason for the different performances is that we compare a \emph{second-order} (minimum energy filter) with a \emph{first-order} (extended Kalman) filter.

\begin{figure*}[htbp]
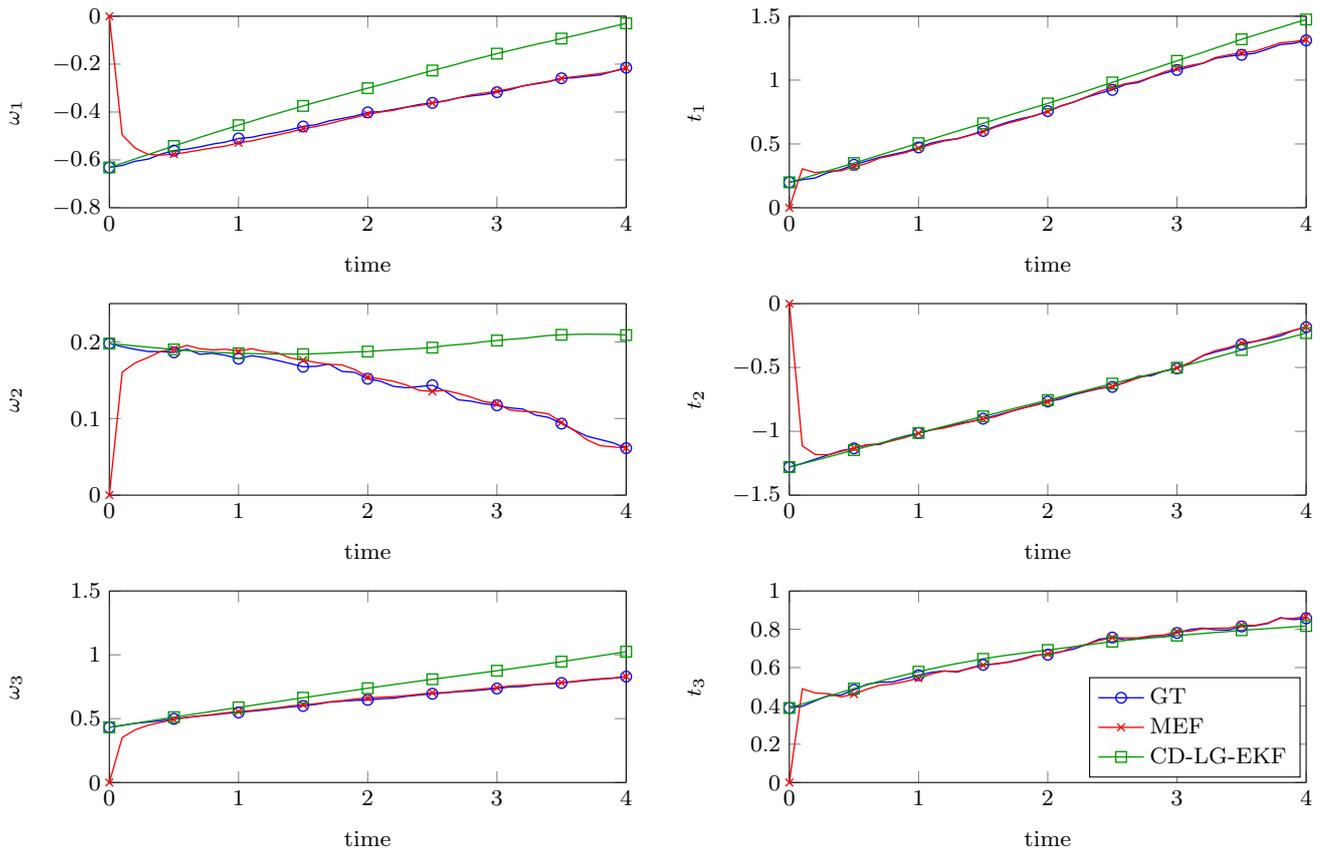

\begin{center}
% This file was created by matlab2tikz.
% Minimal pgfplots version: 1.3
%
%The latest updates can be retrieved from
%  http://www.mathworks.com/matlabcentral/fileexchange/22022-matlab2tikz
%where you can also make suggestions and rate matlab2tikz.
%
\definecolor{mycolor1}{rgb}{0.00000, 0.60000,0.00000}%
\def\h{1}
\def\distI{0.1}
\def\distII{1.6}
\def\distIII{3.1}
\def\t{0.5}

% [inline block 2: 1 envs, 22495 chars -> data_tex | \begin{tikzpicture} ...]
%
\caption{Comparison between minimum energy filter with second-order kinematics (MEF) (red, cross) and extended Kalman filter (CD-LG-EKF) \cite{bourmaud2015continuous} (green, square) with state equation \eqref{eq:state-eq} and observation equation \eqref{eq:linear-observer} as derived in Properties~\ref{prop:MEF-linear} and \ref{prop:EKF-linear}, respectively. We plotted the six components of the rigid motion of the ground truth (GT) (blue, circle), the extended Kalman filter, and the minimum energy filter, i.e. $(\omega_{1}, \omega_{2}, \omega_{3}, t_{1},t_{2},t_{3})^{\T}:= (\vecg(\Log_{\G}(G))))_{1:6}$. Here, $G$ is the corresponding element of the Lie group $\G$. Further, we set the discretization step size to $\delta = 0.1$.
Although we initialized the extended Kalman filter with the ground truth solution and added only little observation noise, it diverges after a few steps whereas the minimum energy filter converges from a wrong initialization to the correct solution within a few steps. The reason for that is that the approach \cite{bourmaud2015continuous} only uses first-order approximation, whereas the minimum energy filter also includes second-order derivatives of the observation function. }
\label{fig:EKF-vs-MEF}
\end{center}
\end{figure*}

\subsubsection{Discussion on Extended Kalman Filter for Non-linear Observations}

We were not able to obtain convergence of this filter from a trivial (chosen as identity element of the Lie group) or ground truth  initialization.
Since the extended Kalman did not converge for linear observations~\eqref{sec:linear-observations} from wrong initializations, we presume that the non-linearities of our observation equations are intractable for the approach from~\cite{bourmaud2015continuous}.

\section{Limitations}
Our proposed method requires good measurements in terms of optical flow and depth maps in order to reconstruct the camera motion correctly. Although we showed on synthetic data that the proposed method is robust against different kinds of noise, it is not robust against outliers, caused by independently moving objects that violate the static scene assumption, or simply wrong computations of optical flow and depth maps. Making our approach robust as component of a superordinate processing stage, however, is beyond the scope of this paper and left for future work.

In addition to optical flow, the proposed method requires depth information which is expensive to obtain if not available anyway, e.g.\ in stereo camera setups.

\section{Conclusion \& Future Work}
We generalized the camera motion estimation approach~\cite{berger2015second} from a model with constant velocity assumption to a more realistic model with constant acceleration assumption as well as to a kinematic model which respects derivatives of any (fixed) order. To the authors' knowledge, this has not been done so far in the fields of image processing and computer vision.
For the resulting second-order minimum energy filter with higher-order kinematics, we provided all necessary derivations and demonstrated that our approach is superior to our previous method~\cite{berger2015second} for both synthetic and real-life data.
We also compared our approach to the state-of-the-art {\em continuous-discrete extended Kalman filter on connected unimodular matrix Lie groups}~\cite{bourmaud2015continuous} and showed that in both cases the minimum energy filters is superior since it converges from imperfect initializations to the correct solutions.

In the future, we want to investigate how to reconstruct the camera motion (with constant acceleration) {\em jointly with the camera's depth map} from {\em monocular} optical flow observations. 
\bigskip

\appendix

\section*{Appendix}

\section{Kronecker products on $\se$}\label{sec:kron}

The Kronecker products $\kronse, \kronse^{\T}: \R^{4\times 4} \times \R^{4\times 4} \rightarrow \R^{6\times 6}$ on $\se$  are defined for  matrices $A,B \in \R^{4\times 4}$ and $\eta \in \se$ through $\vecse(A\eta B)=: (A \kronse B)\vecse(\eta)$ and $\vecse(A \eta^{\T} B)=: (A \kronse^{\T} B)\vecse(\eta)$. Since the explicit formulas for $\kronse, \kronse^{\T}$ are quite uninformative, we do not provide them here.

\section{Properties of $\SE$ and $\mc{G}$}

\subsection{Projection onto $\se$}
The projection $\on{Pr}: \R^{4\times 4} \rightarrow \se$ is given by

\begin{align}
\begin{split}\label{eq:proj-se}
 \on{Pr}(A) := & \tfrac{1}{2}\diag((1,1,1,0)^{\T}) \bigl(A \diag((1,1,1,2)^{\T})  \\
  & \quad - A^{\T} \diag((1,1,1,0)^{\T}) \bigr)\,.
  \end{split}
\end{align}

\subsection{Adjoints, exponential and logarithmic map}

The adjoint operator $\ad_{\mf{se}} (\matse(v))$ can be computed for a vector $v\in \R^{6}$ as follows

\begin{align}
\vecse & \bigl(\ad_{\mf{se}} (\matse(v)) \eta) =  \ad_{\mf{se}}^{\on{vec}}(\matse(v)) \vecse(\eta)	\nonumber \\
  := &\bpm \matso(v_{1:3}) &  \mathbf{0}_{3\times 3} \\ \matso(v_{4:6}) & \matso(v_{1:3}) \epm \vecse(\eta) \,,\label{eq:adjoint_representaiton-se} 
\end{align}
where $\matso(v_{1:3}):=(\matse(v))_{1:3,1:3}$.
This directly follows from the definition of the adjoint as Lie bracket, i.e. $\ad_{\mf{se}}(\xi)\eta := [\xi, \eta]$ where the Lie bracket $[\cdot, \cdot]:\se \times \se \rightarrow \se$ is simply the matrix commutator on $\se.$ 
\begin{align}
\vecse & (\ad_{\se}(\matse(v)) \eta) = \vecse([ \matse(v), \eta])  \\
= & \vecse(\matse(v)\eta \eins_{4} -  \eins_{4} \eta \matse(v)) \\
= &  \bigl(\matse(v) \kronse \eins_{4} - \eins_{4} \kronse\matse(v)\bigr) \vecse(\eta) \,.\label{eq:adjoint_derivation}
\end{align}
A componentwise evaluation of \eqref{eq:adjoint_derivation} leads to \eqref{eq:adjoint_representaiton-se}. Since $\R^{6}$ is trivial, the adjoint representation on $\g$ parametrized by a vector $v \in \R^{12}$ is

\begin{equation}\label{eq:def-ad-g}
\ad_{\g}^{\on{vec}} (\matg(v)) = \bpm \ad_{\mf{se}}^{\on{vec}}(v_{1:6}) & \mathbf{0}_{6\times 6} \\ \mathbf{0}_{6\times 6} & \mathbf{0}_{6\times 6} \epm.
\end{equation}

The exponential map $\Exp_{\SE}: \se \rightarrow \SE$ and the logarithmic map on $\SE$ can be computed by the matrix exponential and matrix logarithm or more efficiently by the \emph{Rodrigues' formula} as in \cite[p.~413f ]{murray1994mathematical}.

Then the exponential map $\Exp_{\G}:\se \rightarrow \SE$ for a tangent vector $\eta = (\eta_{1}, \eta_{2})\in \g$ and the logarithmic map $\Log_{G}:\SE \rightarrow \se$  for $G =(E,v) \in \G$ are simply

\begin{align}
\Exp_{\G}(\eta) = & (\Exp_{\SE}(\eta_{1}), \eta_{2}) \in \G\,, \\
\Log_{\G}(G) = & (\Log_{\SE}(E), v) \in \g\,,
\end{align}
and similar for higher-order state spaces.

\subsection{Vectorization of connection function}
Following \cite[Section ~5.2]{Absil2008}, we can vectorize the connection function $\omega$ of the Levi-Civita connection $\nabla$ for {\em constant} $\eta,$ $\xi \in \g$ in the following way:
\begin{align}
\vecg(\omega_{\eta}\xi)= &\vecg( \omega(\eta,\xi)) = \vecg(\nabla_{\eta} \xi) =  \tilde{\Gamma}_{\vecg(\xi)}\vecg(\eta) \,, \label{eq:vec-connection-function}
\end{align}
where $\tilde{\Gamma}_{x}$ is the matrix whose $(i,j)$ element is the real-valued function
\begin{equation}
(\tilde{\Gamma}_{\gamma})_{i,j} := \sum_{k} (\gamma_{k} \Gamma_{jk}^{i}) \,,
\end{equation}
and $\Gamma_{jk}^{i}$ are the \emph{Christoffel symbols} of the connection function $\omega$ for a vector $\gamma \in \R^{12}.$ Similarly, permuting indices, we can define the adjoint matrix $\tilde{\Gamma}_{\gamma}^{\ast}$ whose $(i,j)$-th element is given by
\begin{equation}
(\tilde{\Gamma}_{\gamma}^{\ast})_{i,j} := \sum_{k} (\gamma_{k} \Gamma_{kj}^{i})\,.
\end{equation}
This leads to the following equality:
\begin{equation} \label{eq:vectorize-connection-function}
\vecg(\omega_{\eta}\xi) = \tilde{\Gamma}_{\vecg(\eta)}^{\ast}\vecg(\xi)\,.
\end{equation}
If the expression $\xi$ in \eqref{eq:vec-connection-function} is {\em non-constant}, we obtain the following vectorization from~\cite[Eq.~(5.7)]{Absil2008}, for the case of the Lie algebra $\se$, i.e.
\begin{align}
& \vecse(\nabla_{\eta_{x}} \xi(x) ) \nonumber \\
 = & \tilde{\Gamma}_{\vecg(\xi(x))}\vecse(\eta_{x}) + \D \vecse(\xi(x))[\vecse(\eta_{x})] \nonumber \\
= & \tilde{\Gamma}_{\vecg(\xi(x))}\vecse(\eta_{x}) + \sum_{i} (\eta_{x})_{i} \vecse(\D \xi(x))[E^{i}]) \nonumber \\
= & \tilde{\Gamma}_{\vecg(\xi(x))}\vecse(\eta_{x}) +    D  \vecse(\eta_{x})\,, \label{eq:vec-Levi-Civita-general}
\end{align}
where the entries of the matrix $D \in \R^{6\times 6}$ can be computed as
\begin{equation}
(D)_{i,j} = (\vecse(\D \xi(x)[E^{j}]))_{i}\,, \quad E^{j} = \matse(e_{j}^{6})\,, \label{eq:comp-D-levi-civita}
\end{equation}
where $e_{j}^{6}$ denotes the $j$-th unit vector in $\R^{6}$.

\section{Proofs}
\label{appendix:proofs}

\begin{proof}[of Proposition \ref{prop1}] The tangent map is simply the differential or directional derivative. 
For $G_{1}=(E_{1},v_{1}) ,G_{2}=(E_{2},v_{2})\in \G$ it holds $T_{G_{2}}L_{G_{1}}:T_{G_{2}}\G \rightarrow T_{L_{G_{1}}(G_{2})}\G.$ Thus, we can compute it for a $\eta = (E_{2}\eta_{1},\eta_{2})\in T_{G_{2}}\G = T_{E_{2}}\SE \times \R^{6}$ as follows
\begin{align*}
 T_{G_2}& L_{G_{1}} \circ \eta =  \D L_{G_{1}}(G_{2}) [\eta] \\
= & \lim_{\tau \rightarrow 0^{+}} \tau^{-1} \bigl( L_{G_{1}}(G_{2}+\tau \eta) - L_{G_{1}}(G_{2})  \bigr)   \\
= & \lim_{\tau \rightarrow 0^{+}} \tau^{-1} \bigl( L_{(E_{1},v_{1})}((E_{2}+\tau E_{2}\eta_{1}, v_{2}+\tau \eta_{2}))  \\ & \quad- (E_{1}E_{2},v_{1}+v_{2})\bigr) \\
= & \lim_{\tau \rightarrow 0^{+}} \tau^{-1} \bigl( (E_{1}E_{2} + \tau E_{1}E_{2}\eta_{1}, v_{1}+ v_{2} + \tau \eta_{2} )  \\ & \quad- (E_{1}E_{2},v_{1}+v_{2})\bigr) \\
= & (G_{1}G_{2}\eta_{1}, \eta_{2}) \in T_{G_{1}G_2}\G = T_{L_{G_{1}}(G_2)}\G \,.
\end{align*}
For $G_{2}=\Id = (\eins_{4}, \mathbf{0}_{6})$ and $\eta = (\eta_{1},
\eta_{2}) \in \g$, it follows 
\begin{equation*}
T_{\Id}L_{G_{1}} \circ \eta = (E_{1}\eta_{1}, \eta_{2}) = L_{(E_{1}, \mathbf{0}_{6})}(\eta_{1},\eta_{2}) =: G_{1} \eta  \in T_{G_{1}}\G \, .
\end{equation*}
Note that the adjoint of the tangent map of $L_{G}$ at identity can be expressed as inverse of $G=(E,v)$, i.e. for $\eta=(\eta_{1},\eta_{2}) \in T_{G}\G$ and $\xi=(\xi_{1}, \xi_{2}) \in \g$
\begin{align*}
\la T_{\Id}L_{G}^{\ast} \eta ,\xi \ra_{\Id} = & \la \eta, T_{\Id}L_{G} \xi \ra_{G} \\
= & \la \eta_{1}, E\xi_{1} \ra_{E} + \la \eta_{2} , \xi_{2} \ra  \\
= & \la E^{-1} \eta_{1}, \xi_{1} \ra_{\Id_{\se}} + \la \eta_{2}, \xi_{2} \ra \\
= & \la L_{(E^{-1},\mathbf{0}_{6})}\eta, \xi \ra_{\Id} \,.
\end{align*}
Thus, $T_{\Id}L_{G}^{\ast} \eta = L_{(E^{-1}, \mathbf{0}_{6})}\eta$. We will use the shorthand $G^{-1}\eta:= T_{\Id}L_{G}^{\ast}$ for the dual of the tangent map of $L_{G}$ at identity. \qed
\end{proof}

\begin{proof}[of Lemma \ref{lemma1}] Since $\mu = (\mu_{1}, \mu_{2}),v$ are independent of $E$ the gradient $\D_{1} \mc H^{-}(G=(E,v), \mu, t)$ can be computed separately in terms of $E$, i.e.  for $\eta = (E \eta_{1}, \eta_{2}) \in T_{G}\G$
\begin{align*}
\D_{1} \mc H^{-}(G,\mu,t)[\eta] = & \Bigl( \D_{E} \tfrac{1}{2}e^{-\alpha(t-t_{0})} \Bigl( \sum_{k=1}^{n} \lVert y_{k}-h_{k}(E)\rVert_{Q}^{2} \Bigr)[\eta_{1}], \\ & \quad  - \D_{v}\la \mu_{1}, \matse(v)\ra[\eta_{2}] \Bigr) \,.
\end{align*}
The directional derivative regarding $v$ can be computed by the usual gradient on $\R^{6}$ which is given by
\begin{align}\label{eq:directional-der-hamilton-v}
\begin{split}
- \D_{v} \la \mu_{1}, \matse(v) \ra [\eta_{2}] = & - \la \vecse(\mu_{1}), \D_{v} v[\eta_{2}] \ra \\
= &  \la -\vecse(\mu_{1}), \eta_{2} \ra\,,
\end{split}
\end{align}
such that $\D_{v} \la \mu_{1}, \matse(v) \ra = -\vecse(\mu_{1})$. For the directional derivative of $\mc H^{-}$ we first consider the directional derivative of $h_{k}(E).$ Since $h_{k}(E)$ can also be written as
\begin{equation}\label{def:hk}
h_{k}(E):= ((e_{3}^{4})^{\T} E^{-1}g_{k}(t))^{-1} \hat{I}E^{-1} g_{k}(t)\,, \quad \hat{I}:=\bsm 1 & 0 & 0 & 0 \\ 0 & 1 & 0 & 0 \esm\,,
\end{equation}
the directional derivative (into direction $\xi$) can be derived by the following matrix calculus.

\begin{align}
\D & h_{k}(E) [\xi] \\
 = & \D \bigl(((e_{3}^{4})^{\T} E^{-1}g_{k})^{-1}\bigr)[\xi]\hat{I}E^{-1} g_{k} \nonumber \\
  & \quad +  ((e_{3}^{4})^{\T} E^{-1}g_{k})^{-1} \D \bigl(\hat{I}E^{-1} g_{k}\bigr)[\xi] \nonumber \\
= & - \kappa_{k}^{-1} \D ((e_{3}^{4} )^{\T}E^{-1}g_{k})[\xi] \kappa_{k}^{-1} \hat{I} E^{-1} g_{k}  \nonumber \nonumber \\
& \quad + \kappa_{k}^{-1} \hat{I} \D (E^{-1})[\xi] g_{k} \nonumber  \\
= & - \kappa_{k}^{-1}(e_{3}^{4})^{\T}  \D ( E^{-1})[\xi]g_{k} \kappa_{k}^{-1} \hat{I} E^{-1} g_{k}  \nonumber  \\
&  + \kappa_{k}^{-1} \hat{I} \D (E^{-1})[\xi] g_{k} \nonumber  \\
= &  - \kappa_{k}^{-1}(e_{3}^{4})^{\T} (-1)E^{-1} \D(E)[\xi]E^{-1}g_{k} \kappa_{k}^{-1} \hat{I} E^{-1} g_{k} \nonumber  \\
& \quad   + \kappa_{k}^{-1} \hat{I}(-1)E^{-1} \D(E)[\xi]E^{-1} g_{k}  \nonumber  \\
\begin{split}
= &   \kappa_{k}^{-2}(e_{3}^{4})^{\T}E^{-1}\xi E^{-1}g_{k}\hat{I} E^{-1} g_{k} - \kappa_{k}^{-1} \hat{I}E^{-1} \xi E^{-1} g_{k}\,, \label{eq:Dhk} 
\end{split}
\end{align}

where $\kappa_{k} = \kappa_{k}(E):= (e_{3}^{4})^{\T}E^{-1} g_{k}.$  Then  for the choice $\xi = E\eta_{1}$ we find that
\begin{align}
& e^{\alpha(t-t_{0})} \D_{1} \mathcal{H}^{-}(G, \mu, t)[E\eta_{1}] \\
=&  - \sum_{k=1}^{n} \tr \bigl( \D h_{k}(E)[E\eta_{1}] (y_{k} - h_{k}(E))^{\T} Q \bigr) \nonumber \\
 =&  - \sum_{k=1}^{n} \tr \bigl( \bigl( \kappa_{k}^{-2} ((e_{3}^{4})^{\T} \eta_{1} E^{-1} g_{k}) \hat{I}E^{-1} g_{k}-\kappa_{k}^{-1} \hat{I} \eta_{1} E^{-1} g_{k}\bigr) \nonumber\\
 & \quad \cdot (y_{k} - h_{k}(E))^{\T} Q \bigr)\nonumber \\
 = &\sum_{k=1}^{n} \tr \bigl( \bigl(\kappa_{k}^{-1} \hat{I} \eta_{1} E^{-1} g_{k}-  \kappa_{k}^{-2} ((e_{3}^{4})^{\T} \eta_{1} E^{-1} g_{k}) \hat{I}E^{-1} g_{k}\bigr)\nonumber \\
 & \quad \cdot  (y_{k} - h_{k}(E))^{\T}Q  \bigr)\nonumber  \\
 = &  \sum_{k=1}^{n} \tr \bigl( \bigl(\kappa_{k}^{-1} \hat{I} \eta_{1} E^{-1} g_{k}-  \kappa_{k}^{-2} \hat{I}E^{-1} g_{k} (e_{3}^{4})^{\T} \eta_{1} E^{-1} g_{k} \bigr) \nonumber \\
 & \quad \cdot  (y_{k} - h_{k}(E))^{\T} Q \bigr)\nonumber  \\
 = & \sum_{k=1}^{n} \tr \bigl( \bigl(\kappa_{k}^{-1}\hat{I} -  \kappa_{k}^{-2} \hat{I}E^{-1} g_{k} (e_{3}^{4})^{\T}\bigr) \eta_{1} E^{-1} g_{k}  (y_{k} - h_{k}(E))^{\T} Q \bigr) \nonumber \\
 = & \sum_{k=1}^{n} \tr \bigl(E^{-1} g_{k}  (y_{k} - h_{k}(E))^{\T} Q \bigl(\kappa_{k}^{-1}\hat{I} -  \kappa_{k}^{-2} \hat{I}E^{-1} g_{k} (e_{3}^{4})^{\T}\bigr)\eta_{1}  \bigr)\nonumber  \\
 = & \sum_{k=1}^{n}  \big \la \underbrace{\bigl(\kappa_{k}^{-1}\hat{I}  -  \kappa_{k}^{-2}  \hat{I}   E^{-1}   g_{k}(e_{3}^{4})^{\T} \bigr)^{\T}Q(y_{k} - h_{k}(E))   g_{k}^{\T}  E^{-\T}}_{=:A_k(E)},\eta_{1}  \big \ra \,_{\Id} .\label{eq:proof_lemma1:DH}
\end{align}
Here we used that the trace is cyclic. We obtain the Riemannian gradient on~$\SE$ by projecting (cf.~\cite[Section ~3.6.1]{Absil2008}) the left hand side of the Riemannian metric in \eqref{eq:proof_lemma1:DH} onto $T_{E}\SE$, which is for $G=(E,v)$
\begin{align}\label{eq:gradE_Hamiltonian1}
\begin{split}
\D_{E} \mathcal H^{-}(G,\mu,t) = & e^{-\alpha(t-t_{0})} \on{Pr}_{E}\bigl(E A_{k}(E)  \bigr)\\
= &  e^{-\alpha(t-t_{0})} \sum_{k} E\on{Pr}\bigl( A_{k}(E)  \bigr) \,,
\end{split}
\end{align}
with $A_{k}(E):=  
\bigl(\kappa_{k}^{-1}\hat{I}  -  \kappa_{k}^{-2}  \hat{I}   E^{-1}   g_{k}(e_{3}^{4})^{\T} \bigr)^{\T}
Q(y_{k} - h_{k}(E))   g_{k}^{\T}  E^{-\T}$,  
and $\on{Pr}_{E}:\on{GL}_{4} \rightarrow T_{E}\SE$ denotes the projection onto the tangential space $T_{E}\SE$ that can be expressed in terms of $\on{Pr}_{E}(E \cdot) = E \on{Pr}(\cdot)$. Besides, $\on{Pr}:\on{GL}_{4} \rightarrow \se$ denotes the projection onto the Lie algebra $\se$ as given in \eqref{eq:proj-se}.

Putting together \eqref{eq:directional-der-hamilton-v} and \eqref{eq:gradE_Hamiltonian1} results in
\begin{align}
\begin{split}
& \D_{1}\mc H^{-}(G,\mu,t)  = \\
&   \Bigl(  e^{-\alpha(t-t_{0})} \sum_{k=1}^n E\on{Pr}\bigl( A_{k}(E)  \bigr), -\vecse(\mu_{1})\Bigr)  \in T_{G}\G \,.
\end{split}
\end{align}\qed
\end{proof}

\begin{proof}[of Lemma \ref{lemma2}] 
Eq.~\eqref{eq:vectorization_Z} can be easily found by considering a basis of $\se$ and the fact that $Z$ is a linear operator on the Lie algebra. Since the resulting matrix $K(t) \vecg(\eta) := Z(G^{\ast},t)\circ \eta$ depends only on $t$, the equation \eqref{eq:vec-temp-der-Z}. Eq.\eqref{eq:vec-inv-Z} is trivial since $Z$ is linear.
\begin{enumerate}
	\item[1.] With the symmetry of the Levi-Civita connection, i.e. 
	\begin{equation}\label{eq:symmetry-lc}
	[\eta,\xi] = \nabla_{\eta}\xi - \nabla_{\xi}\eta \,,
	\end{equation}
	 we gain the following equalities
	\begin{align}
	\vecg(& Z(G^{\ast},t) \circ \omega_{(G^{\ast})^{-1} \dot{G^{\ast}}}\eta + Z(G^{\ast},t) \circ  \omega_{\D_{2}\mc H^{-}(G^{\ast},0,t)}^{\leftrightharpoons}\eta) \nonumber  \\
	\stackrel{\eqref{eq:vectorization_Z}}{=} & K(t) \vecg(\omega_{(G^{\ast})^{-1} \dot{G^{\ast}}}\eta + \omega_{\D_{2}\mc H^{-}(G^{\ast},0,t)}^{\leftrightharpoons}\eta) \nonumber \\
\stackrel{\eqref{eq:ode-optimal-state1}}{=} & K(t)   \vecg( \nabla_{ -\D_{2} \mc H^{-}(G^{\ast},0,t)}\eta \nonumber \\
&  - \nabla_{e^{-\alpha(t-t_{0})} Z(G^{\ast},t)^{-1} \circ r_{t}(G^{\ast})}\eta + \nabla_{\eta}\D_{2}\mc H^{-}(G^{\ast},0,t)) \nonumber\\
\stackrel{\eqref{eq:symmetry-lc}}{=} & K(t) \vecg(-[\D_{2}\mc H^{-}(G^{\ast},0,t),\eta]) \nonumber   \\
& - \nabla_{e^{-\alpha(t-t_{0})} Z(G^{\ast},t)^{-1} \circ r_{t}(G^{\ast})}\eta \nonumber \\
\stackrel{\eqref{eq:vectorize-connection-function}}{=} & K(t) \bigl( \vecg([f(G^{\ast}),\eta]) \nonumber \\
&  - \tilde{\Gamma}_{\vecg(e^{-\alpha(t-t_{0})} Z(G^{\ast},t)^{-1} \circ r_{t}(G^{\ast}))}^{\ast}\vecg(\eta) \bigr) \nonumber \\
\stackrel{\eqref{eq:vectorization_Z}}{=} & K(t) \bigl( \vecg([f(G^{\ast}),\eta]) \nonumber \\
&  + \tilde{\Gamma}_{-e^{-\alpha(t-t_{0})} K(t)^{-1}\vecg(r_{t}(G^{\ast}))}^{\ast}\bigr)\vecg(\eta\bigr)  \nonumber \\
\stackrel{ \eqref{eq:def-ad-g}}{=} & K(t) \bigl( \ad_{\g}^{\on{vec}} (f(G^{\ast})) \nonumber \\
& \qquad + \tilde{\Gamma}_{-e^{-\alpha(t-t_{0})} K(t)^{-1}\vecg(r_{t}(G^{\ast}))}^{\ast}  \bigr)\vecg(\eta) \nonumber \\
\begin{split}
=: & K(t) B \vecg(\eta)\,.
\end{split}
\label{eq:vectorization-connection-function}
	\end{align}
        The claim follows from the fact that the adjoints and the Christoffel symbols on $\R^{6}$ are zero.
	\item[2.] Since this expression is dual to the expression in 1. the claim follows by using its transpose.

	\item[3.]  Recall that the Hamiltonian in \eqref{eq:left-trivial-hamilton} is given by
	\begin{align*}
\mathcal{H}^{-}& ((E,V), \mu, t) =  \tfrac{1}{2}e^{-\alpha(t-t_{0})} \Bigl( \sum_{k=1}^{n} \lVert y_{k}-h_{k}(E)\rVert_{Q}^{2} \Bigr) \nonumber \\
& - \tfrac{1}{2}e^{\alpha(t-t_{0})}\Bigl( \la  \mu_{1}, \matse(S_{1}^{-1}\vecse(\mu_{1})) \ra_{\Id}  \\
& + \la \mu_{2}, S_{2}^{-1} \mu_{2} \ra\Bigr) - \la \mu_{1}, \matse(V)\ra_{\Id} \,.\nonumber
\end{align*}
The Riemannian Hessian w.r.t.\ the first component can be computed for $G=(E,v) \in \G$, $\eta=(\eta_{1},\eta_{2}) \in\g$ and the choice $\mu = (\mu_{1},\mu_{2})=(\mathbf{0}_{4\times 4}, \mathbf{0}_{6})$ as 
\begin{align}
& e^{\alpha(t-t_{0})}\vecg  ( G^{-1} \Hess_{1} \mc H^{-}(G,\mu,t)[G\eta])  \nonumber \\
& =e^{\alpha(t-t_{0})} \vecg\Bigl(G^{-1} \nabla_{G\eta} \D_{1} \mc{H}^{-}(G,\mathbf{0},t) \Bigr)   \label{eq:Hess1Hamilton-1} \\
& = e^{\alpha(t-t_{0})} \vecg\Bigl( \nabla_{\eta} G^{-1} \D_{1} \mc{H}^{-}(G,\mathbf{0},t) \Bigr)  \label{eq:Hess1Hamilton-2}  \\
& = \vecg\Bigl(\nabla_{\eta} \bigl(\sum_{k=1}^{n}\on{Pr}(A_{k}(E)), -e^{\alpha(t-t_{0})} \vecse(\mathbf{0}_{4\times 4}) \bigr)\Bigr) \label{eq:Hess1Hamilton-3}  \\
& =  \Bigl(\sum_{k=1}^{n}  \vecse\bigl(\nabla_{\eta_{1}} \on{Pr}\bigl(A_{k}(E)\bigr)\bigr), \mathbf{0}_{6}\Bigr)\nonumber \\ 
\begin{split}  \label{eq:Hess1Hamilton-5} 
& = \sum_{k=1}^{n}\Bigl( \tilde{\Gamma}_{\vecg\bigl(\on{Pr}(A_{k}(E))\bigr)}\vecse(\eta_{1}) \\
& \quad \quad \quad + \sum_{i} (\eta_{1})_{i} \vecse(\D \on{Pr}\bigl(A_{k}(E)\bigr))[E^{i}]) \Bigr)\,.
\end{split}
\end{align}

Here, line \eqref{eq:Hess1Hamilton-1} follows from the general definition of the Hessian (cf.~\cite[Def.~5.5.1]{Absil2008}). Line \eqref{eq:Hess1Hamilton-2} holds because of the linearity of the affine connection, the equation \eqref{eq:Hess1Hamilton-3} results from insertion of the expression in Lemma~\ref{lemma1} and 
\eqref{eq:Hess1Hamilton-5} can be achieved with \eqref{eq:vec-Levi-Civita-general}. 

As next we calculate the differential  $ \D \on{Pr}(A_{k}(E)) [\eta_{1}]$ in \eqref{eq:Hess1Hamilton-5} for an arbitrary direction $\eta_{1}$. Since the projection is a linear operation (cf.~\eqref{eq:proj-se}), i.e.~$\D \on{Pr}(A_{k}(E))[\eta_{1}] = \on{Pr}(\D A_{k}(E)[\eta_{1}])$, we require to calculate $\D A_{k}(E)[\eta_{1}]$. By using the product rule and the definition of $A_{k}$ from \eqref{eq:def-Ak} we obtain

\begin{align}
& \D A_{k}(E) [\eta_{1}] \nonumber \\
 = & \D \bigl(\bigl(\kappa_{k}^{-1}\hat{I}  -  \kappa_{k}^{-2}  \hat{I}   E^{-1}   g_{k}(e_{3}^{4})^{\T} \bigr)^{\T}Q(y_{k} - h_{k}(E))   g_{k}^{\T}  E^{-\T} \bigr)[\eta_{1}]  \nonumber \\
 \begin{split}\label{eq:DAk}
= &  \bigl(\D \bigl(\kappa_{k}^{-1}\hat{I}  -  \kappa_{k}^{-2}  \hat{I}   E^{-1}   g_{k}(e_{3}^{4})^{\T} \bigr)^{\T}[\eta_{1}]Q(y_{k} - h_{k}(E))   g_{k}^{\T}  E^{-\T} \bigr) \\
+  & \bigl(\kappa_{k}^{-1}\hat{I}  -  \kappa_{k}^{-2}  \hat{I}   E^{-1}   g_{k}(e_{3}^{4})^{\T} \bigr)^{\T}Q \Bigl(( - \D h_{k}(E)[\eta_{1}])   g_{k}^{\T}  E^{-\T} \bigr) \\
 + &  \bigl((y_{k} - h_{k}(E))   g_{k}^{\T}  \D E^{-\T}[\eta_{1}] \bigr) \Bigr) \,.
 \end{split}
\end{align}	

The directional derivative of $\bigl(\kappa_{k}^{-1}\hat{I}  -  \kappa_{k}^{-2}  \hat{I}   E^{-1}   g_{k}(e_{3}^{4})^{\T} \bigr)$ is
\begin{align}
& \D \bigl(\kappa_{k}^{-1}\hat{I}  -  \kappa_{k}^{-2}  \hat{I}   E^{-1}   g_{k}(e_{3}^{4})^{\T} \bigr)[\eta_{1}] \nonumber \\
= & -\kappa_{k}^{-2} (e_{3}^{4})^{\T} \D E^{-1}[\eta_{1}] g_{k}\hat{I} \nonumber  \\
& \quad  + 2 \kappa_{k}^{-3}(e_{3}^{4})^{\T} \D E^{-1}[\eta_{1}]g_{k} \hat{I}E^{-1}g_{k}(e_{3}^{4})^{\T} \nonumber  \\
& \quad  \quad - \kappa_{k}^{-2} \hat{I} \D E^{-1} [\eta_{1}] g_{k}(e_{3}^{4})^{\T} \nonumber \\
\begin{split}\label{eq:DAk-first}
= & \kappa_{k}^{-2} (e_{3}^{4})^{\T} E^{-1}\eta E^{-1} g_{k} \hat{I} \\
& \quad - 2\kappa_{k}^{-3}(e_{3}^{4})^{\T} E^{-1}\eta_{1} E^{-1} g_{k} \hat{I}E^{-1}g_{k}(e_{3}^{4})^{\T} \\
& \quad \quad + \kappa_{k}^{-2} \hat{I}E^{-1} \eta_{1} E^{-1} g_{k}(e_{3}^{4})^{\T} \,.
\end{split}
\end{align}

By inserting the directional derivatives \eqref{eq:DAk-first}, \eqref{eq:Dhk} and $\D E^{-\T}[\eta_{1}] = -(E^{-1}\eta_{1}E^{-1})^{\T}$ into \eqref{eq:DAk}, we obtain the vector-valued function $\zeta^{k}(E)(\cdot): \se \rightarrow \R^{6}$ defined as
		
\begin{align}		
& \matse( \zeta^{k}(E)(\eta_{1})) := \on{Pr}\bigl(\D A_{k}(E)[\eta_{1}] \bigr) \label{eq:def-zeta}\\ 
= & \on{Pr} \biggl(\Bigl(\kappa_{k}^{-2} (e_{3}^{4})^{\T} E^{-1}\eta_{1} E^{-1} g_{k} \hat{I} \nonumber \\
& \quad - 2\kappa_{k}^{-3}(e_{3}^{4})^{\T} E^{-1}\eta_{1} E^{-1} g_{k} \hat{I}E^{-1}g_{k}(e_{3}^{4})^{\T} \nonumber \\
& \quad + \kappa_{k}^{-2} \hat{I}E^{-1} \eta_{1} E^{-1} g_{k}(e_{3}^{4})^{\T}\Bigr)^{\T}Q\bigl(y_{k} - h_{k}(E)\bigr)   g_{k}^{\T}  E^{-\T}  \nonumber  \\
& + \bigl(\kappa_{k}^{-1}\hat{I}  -  \kappa_{k}^{-2}  \hat{I}   E^{-1}   g_{k}(e_{3}^{4})^{\T} \bigr)^{\T}Q \Bigl(\bigl(\kappa_{k}^{-1} \hat{I}E^{-1} \eta_{1} E^{-1} g_{k} \nonumber  \\
& - \kappa_{k}^{-2}(e_{3}^{4})^{\T}E^{-1}\eta_{1} E^{-1}g_{k}\hat{I} E^{-1} g_{k} \bigr) g_{k}^{\T}E^{-\T} \nonumber \\
&  - \bigl(y_{k}-h_{k}(E)\bigr)g_{k}^{\T} E^{-\T} \eta_{1}^{\T} E^{-\T} \Bigr)\biggr) \,. \nonumber
\end{align}	

Using the basis $\{E^{j}\}_{j=1}^{6}$ of $\se$, with $E^{j}:= \matse(e_{j}^{6})$ we define, as in \eqref{eq:comp-D-levi-civita}, the following matrix $D_{k}(E) \in \R^{6\times 6}$ with components
\begin{equation}\label{eq:def_Dk}
(D_{k}(E))_{i,j}:= \zeta_{i}^{k}(E^{j})\,.
\end{equation}
By using the equation \eqref{eq:vec-Levi-Civita-general} we find that
\begin{equation*}
\vecse \bigl(\nabla_{\eta_{1}} \on{Pr}(A_{k}(E))\bigr) = \bigl(\tilde{\Gamma}_{\on{Pr}(A_{k}(E))} + D_{k}(E)\bigr) \vecse(\eta_{1})\,.
\end{equation*}
Insertion of this expression into \eqref{eq:Hess1Hamilton-5} leads finally to the desired result, i.e.
\begin{align}
& e^{\alpha(t-t_{0})}\vecg  ( G^{-1} \Hess_{1} \mc H^{-}(G,\mu,t)[G\eta])  \nonumber \\
& =  \begin{pmatrix}
\sum_{k=1}^{n} ( \tilde{\Gamma}_{\vecse(\on{Pr}(A_{k}(E)))} + D_{k}(E)) &  \mathbf{0}_{6\times 6} \nonumber  \\
\mathbf{0}_{6\times 6} & \mathbf{0}_{6\times 6}
\end{pmatrix} \vecg(\eta) \,. \nonumber 
\end{align}

\item[4.] The Riemannian gradient of the Hamiltonian regarding the second component is at zero, thus we obtain 
\begin{equation} \label{eq:grad2-zero}
\D_{2} \mc H^{-} (G,\mathbf{0}, t) = \bigl(  - \matse(v) , \mathbf{0} \bigr) = - f(G)\,.
\end{equation}
Computation of differential regarding the first component at $\eta = (E\eta_{1},\eta_{2}) \in T_{G}\G$ results in
\begin{align*}
\D_{1}&(\D_{2} \mc H^{-}(G,\mathbf{0},t))[\eta] = -\D f(G)[\eta] \\
= & -\D_{(E,v)} (\matse(v), \mathbf{0})[\eta] \\
= & -(\matse(\eta_{2}) , \mathbf{0})\,.
\end{align*}
Finally, we compute the complete expression which is for $\eta=(\eta_{1},\eta_{2}) \in \g$ and $G^{\ast}=(E,v) \in \G$
\begin{align}
\vecg(Z(G^{\ast},t)& \circ \D_{1} (\D_{2} \mc H^{-})(G^{\ast},0,t) \circ T_{\on{Id}}L_{G^{\ast}}\eta) \nonumber  \\
\begin{split}\label{eq:vecD1D2H}
= & K(t) \vecg( \D_{1} (\D_{2} \mc H^{-})(G^{\ast},0,t) [E\eta_{1},\eta_{2}]) \\
= & -K(t) \vecg((\matse(\eta_{2}) , \mathbf{0})) \\
= & -K(t) \begin{pmatrix}
\mathbf{0}_{6\times 6} & \eins_{6} \\
\mathbf{0}_{6\times 6} & \mathbf{0}_{6\times 6}
\end{pmatrix}
\vecg(\eta)\,.
\end{split}
\end{align}
	\item[5.] The following duality holds
	 \begin{align}
	 \begin{split} \label{eq:D2D1H}
	 \D_{2}(\D_{1} \mc H^{-}(G^{\ast},0,t)) = & (\D_{1}(\D_{2} \mc H^{-}(G^{\ast},0,t)))^{\ast} \\ =&  - (\D_{G^{\ast}}f(G^{\ast}))^{\ast}\,,
	 \end{split}
	 \end{align}
as well as the following duality rule for linear operators $f,g: \g \rightarrow \g^{\ast}$ (i.e. $f^{\ast},g^{\ast}: \g \rightarrow \g^{\ast}$ by the identification $\g^{\ast \ast} = \g$) and $\eta, \xi \in \g$,
\begin{align}
\begin{split}
& \la (g^{\ast} \circ f^{\ast})(\eta) ,\xi \ra_{\Id} = \la f^{\ast}(\eta), g(\xi) \ra_{\Id} \\
= &  \la \eta, (f\circ g)(\xi) \ra_{ \Id} = \la (f\circ g)^{\ast}(\eta), \xi \ra_{\Id}\,,
\end{split}
\end{align}
from which follows 
\begin{equation}\label{eq:duality-permutation}
(g^{\ast} \circ f^{\ast}) = (f \circ g)^{\ast}\,.
\end{equation}	 
Note that for $\g = \se$ we replace the Riemannian metric $\la \cdot, \cdot \ra$ by the trace, and that the dual notation can be replaced by the transpose.
	 
Applying the $\vecg-$ operation for $\eta \in \g$ gives
\begin{align*}
\vecg(T_{\on{Id}} & L_{G^{\ast}}^{\ast} \circ \D_{2}(\D_{1} \mc H^{-})(G^{\ast},0,t) \circ Z(G^{\ast},t) \circ \eta) \\
\stackrel{\eqref{eq:D2D1H}}{=} & 	-\vecg(T_{\on{Id}} L_{G^{\ast}}^{\ast} \circ  (\D f(G^{\ast}))^{\ast}  \circ Z(G^{\ast},t)\circ \eta) \\
\stackrel{\eqref{eq:duality-permutation}}{=} &  	-\vecg((\D f(G^{\ast}) \circ T_{\on{Id}} L_{G^{\ast}})^{\ast}  \circ Z(G^{\ast},t)\circ \eta) \\
\stackrel{\eqref{eq:vecD1D2H}}{=} & - \begin{pmatrix}
\mathbf{0}_{6\times 6} & \mathbf{0}_{6\times 6} \\
\eins_{6} & \mathbf{0}_{6\times 6}
\end{pmatrix} \vecg(  Z(G^{\ast},t)\circ \eta)  \\
= &  - \begin{pmatrix}
\mathbf{0}_{6\times 6} & \mathbf{0}_{6\times 6} \\
\eins_{6} & \mathbf{0}_{6\times 6}
\end{pmatrix}K(t)\vecg(\eta)  \,.
\end{align*}
\item[6.] It holds for $\eta = (\eta_{1}, \eta_{2}) \in \mf g$ and the definition of the Riemannian Hessian that
\begin{equation}\label{eq:Hess2Hamilton}
\Hess_{2} \mc H^{-} (G,\mu,t) [ \eta] = \nabla_{(\eta_{1},\eta_{2})} \D_{2} \mc H^{-}(G,\mu,t) \,.
\end{equation}

The Riemannian gradient of the Hamiltonian regarding the second component can be computed for $G=(E,v)\in \G$ as

\begin{align}
& \D_{2}  \mc H^{-} (G,\mu, t) \nonumber  \\
=&  \bigl( - e^{\alpha(t-t_{0})} \matse(S_{1}^{-1}\vecse(\mu_{1})) - \matse(v), \label{eq:grad2-hamiltonian} \\
& \quad  \quad \quad \quad - e^{\alpha(t-t_{0})} S_{2}^{-1} \mu_{2} \bigr)\,. \nonumber
\end{align}

Inserting \eqref{eq:grad2-hamiltonian} into \eqref{eq:Hess2Hamilton} results in
\begin{align*}
& e^{-\alpha(t-t_{0})}\Hess_{2}  \mc H^{-} (G,\mu,t)  [\eta]  \\
= & - \nabla_{(\eta_{1}, \eta_{2})} \bigl( \matse(S_{1}^{-1}\vecse(\mu_{1})) + \matse(v) , S_{2}^{-1} \mu_{2} \bigr) \\
= & - \on{Pr}_{\mf g}\Bigl( \D_{\mu}(\matse(S_{1}^{-1}\vecse(\mu_{1})) + \matse(v))[\eta], \\
& \hspace{5.5cm}  \D_{\mu}(S_{2}^{-1}\mu_{2})[\eta]   \Bigr) \\
= &- \Bigl( \on{Pr} \bigl( \matse(S_{1}^{-1}\vecse(\eta_{1})) \bigr)  ,  S_{2}^{-1}\eta_{2} \Bigr) \\
= & -  \Bigl( \matse(S_{1}^{-1} \vecse(\eta_{1})) , S_{2}^{-1}\eta_{2} \Bigr)\,,
\end{align*}
where $\on{Pr}_{\g}: \R^{4\times 4} \times \R^{6} \rightarrow \g$ denotes the projection onto the Lie algebra $\g$. Note that the second component of the projection is trivial.

This result coincides with \cite{saccon2013second} where the Hessian of the Hamiltonian regarding the second component is computed directly. Applying the $\vecg-$operation leads to

\begin{align*}
\vecg(& \Hess_{2}  \mc H^{-} (G,\mu,t)  [T_{\on{Id}}L_{G} \eta]) \\
= & - e^{\alpha(t-t_{0})} \vecg\Bigl( \matse(S_{1}^{-1} \vecse(\eta_{1})) , S_{2}^{-1}\eta_{2} \Bigr) \\
= & -e^{\alpha(t-t_{0})} ((S_{1}^{-1} \vecse(\eta_{1}))^{\T}, (S_{2}^{-1}\eta_{2})^{\T})^{\T} \\
= & -e^{\alpha(t-t_{0})} \underbrace{\begin{pmatrix} S_{1}^{-1} & \mathbf{0}_{6\times 6} \\ \mathbf{0}_{6 \times 6} & S_{2}^{-1} \end{pmatrix}}_{=:S^{-1}} \vecg(\eta)\,.
\end{align*}

Now we apply the $\vecg$-operation to the expression $Z(G^{\ast},t) \circ  \Hess_{2} \mc H^{-}(G^{\ast},0,t) \circ Z(G^{\ast},t)$:

\begin{align*}
 \vecg& \Bigl(Z(G^{\ast},t) \circ  \Hess_{2} \mc H^{-}(G^{\ast},0,t)[Z(G^{\ast},t)(\eta)]\Bigr) \\ 
= & K(t) \vecg \Bigl(\Hess_{2} \mc H^{-}(G^{\ast},0,t)[Z(G^{\ast},t)(\eta)]\Bigr) \\
= & -e^{\alpha(t-t_{0})} K(t)S^{-1} \vecg(Z(G^{\ast},t)(\eta)) \\
= & -e^{\alpha(t-t_{0})} K(t)S^{-1}K(t)\vecg(\eta) \,. 
\end{align*}
\end{enumerate} \qed
\end{proof}

\section{Christoffel symbols}\label{app:christoffel}

The Christoffel symbols $\Gamma_{ij}^{k}, i,j,k \in \{1,\dots,6\}$ for the Riemannian connection on $\SE$ are given by

\begin{align*}
\Gamma_{12}^{3} = \Gamma_{23}^{1} = \Gamma_{31}^{2} = & \tfrac{1}{2}\, , \\
\Gamma_{13}^{2} = \Gamma_{21}^{3} = \Gamma_{32}^{1} = & -\tfrac{1}{2}\, , \\
\Gamma_{15}^{6} = \Gamma_{26}^{4} = \Gamma_{34}^{5} = & 1 \,, \\
\Gamma_{16}^{5} = \Gamma_{24}^{6} = \Gamma_{35}^{4} = & -1\,  .
\end{align*}
and zero otherwise. Note that this Christoffel symbols are similar to these of the {\em kinematic} connection in~\cite{Zefran1999}. However, for the {\em Riemannian} connection, we need to switch the indexes $i$ and $j$.

\section{Derivations for Extended Kalman Filter} \label{sec:app-ext-kalman}

The function $\Phi: \R^{12} \rightarrow \R^{12\times 12}$ in Alg.~\ref{Alg:ExtendedKalman} is
\begin{align*}
\Phi(v) =  & \bpm \Phi_{\SE}(v_{1:6}) & \mathbf{0}_{6\times 6} \\ \mathbf{0}_{6\times 6} & \eins_{6} \epm \,,
\end{align*}
whereas the function $\Phi_{\SE}$ is given in \cite[Section ~10]{selig2004lie} (cf.~\cite[Eq.~(17)]{bourmaud2015continuous}).

\subsection{Derivations for non-linear Observations}
The expression of $H_{l}$ that is defined in \cite[Eq.~(59)]{bourmaud2015continuous} 
is simply the Riemannian gradient of the observation function $h_{k}$, i.e. 
\begin{align*}
H_{l} := \sum_{k=1}^{n} \D h_{k}(G(t_{l}))\,,
\end{align*}
where $h_k$ is defined as in \eqref{def:hk}; and the $\D h_{k}$ can be computed component-wise (for $j=1,2$) for $G(t_{l})=(E(t_{l}),v(t_{l}))$ by the directional derivative for  a direction $G\eta\in T_{G}\mc G.$
\begin{align}
\D & h_{k}^{j}(G)[G\eta] =  \D \bigl((e_{3}^{4}E^{-1}g_{k})^{-1} e_{j}^{4} E^{-1} g_{k}\bigr)[(E\eta_{1}, \eta_{2})] \\
= & \kappa_{k}^{-2} e_{3}^{4} \eta_{1}E^{-1}g_{k}e_{j}^{4} E^{-1}g_{k} - \kappa_{k}^{-1} e_{j}^{4} \eta_{1} E^{-1}g_{k} \\
= & \la \bigl(\kappa_{k}^{-2} E^{-1} g_{k}e_{j}^{4} E^{-1}g_{k}e_{3}^{4} - \kappa_{k}^{-1}E^{-1} g_{k}e_{j}^{4}\bigr)^{\T},\eta_{1} \ra \\
=: & \la \rho_{k}^{j}(G), \eta_{1}\ra \,,
\end{align}
where the second last line follows from the definition of the Riemannian metric on $\SE$, i.e. $\la \eta, \xi\ra_{\Id} = \eta^{\T}\xi$, and the fact that the trace is cyclic.
By projection of $\rho_{k}^{1}(G(t_{l}))$ onto the Lie algebra $\se$ and by vectorization, we obtain the Riemannian gradient. Stacking the vectors leads to the Jacobian  $H_{l} \in \R^{2\times 12}$, which is provided through
\begin{equation}\label{eq:Hl-nonlinear}
H_{l} = \sum_{k=1}^{l} \bpm \vecse(\on{Pr}(\rho_{k}^{1}(t_{l})))^{\T} &  \mathbf{0}_{1\times 6} \\  \vecse(\on{Pr}(\rho_{k}^{2}(t_{l})))^{\T} & \mathbf{0}_{1\times 6}\epm\,. 
\end{equation}

Next, we consider the calculation of the function $J(t)$ in Alg.~\ref{Alg:ExtendedKalman} in line \ref{line:dyn-P}.
Following \cite{bourmaud2015continuous}, $J(t)$ can be calculated as

\begin{equation}\label{eq:def-J-ext-Kalman}
J(t) = F(t) - \ad_{\g}(f(G(t))) + \tfrac{1}{12} C(S)\,,
\end{equation}
where the differential of $F(t) = \D f(G(t))$ can be computed as

\begin{equation}\label{eq:def-F-ext-Kalman}
F(t) = \bpm  \mathbf{0}_{6\times 6} & \eins_{6} \\ \mathbf{0}_{6\times 6} & \mathbf{0}_{6\times 6}\epm\,.
\end{equation}
For a diagonal weighting matrix $S$, we find that in \eqref{eq:def-J-ext-Kalman} the function $C$ can be computed for diagonal weighting matrices $S$ as

\begin{equation}
C(S) = \bpm  \bsm   \Xi & \mathbf{0}_{3\times 3} \\ \mathbf{0}_{3\times 3} & \Xi \esm &  \mathbf{0}_{6\times 6} \\ \mathbf{0}_{6\times 6} & \mathbf{0}_{6\times 6} \epm\,,
\end{equation}
where $\Xi = -\diag((S_{22}+S_{33},S_{11}+S_{33}, S_{11}+S_{22} )^{\T})$,
and the adjoint in \eqref{eq:def-J-ext-Kalman} can be computed with \eqref{eq:def-ad-g}.

%\begin{acknowledgements}
%If you'd like to thank anyone, place your comments here
%and remove the percent signs.
%\end{acknowledgements}

% BibTeX users please use one of
%\bibliographystyle{spbasic}      % basic style, author-year citations
\bibliographystyle{spmpsci}      % mathematics and physical sciences
\bibliography{paper}   % name your BibTeX data base

\end{document}